\providecommand{\keywords}[1]{\smallskip\noindent\textbf{\emph{Keywords:}} #1}
\newcommand{\N}{\mathbb{N}}
\DeclareMathOperator{\Fac}{Fac}
\DeclareMathOperator{\val}{val}
\DeclareMathOperator{\rep}{rep}
\DeclareMathOperator{\pref}{pref}
\newcommand{\smatrix}[1]{
	\left(\begin{smallmatrix} #1 \end{smallmatrix} \right)
}
\newcommand{\infw}[1]{\mathbf{#1}}
\declaretheorem[numberwithin=section]{theorem}
\declaretheorem[sibling=theorem]{lemma,corollary,proposition}
\declaretheorem[sibling=theorem,style=definition]{example,definition,remark,observation}
\declaretheorem{claim}
\declaretheoremstyle[
    headfont=\normalfont\itshape, 
    bodyfont = \normalfont,
    qed=$\blacksquare$, 
    headpunct={:}]{claimproofstyle} 
\declaretheorem[name={Proof of claim}, style=claimproofstyle, unnumbered]{claimproof}
\crefname{equation}{}{}
\title{On extended boundary sequences of morphic and Sturmian words}
\author{Michel Rigo}
\author{Manon Stipulanti\thanks{Supported by the FNRS Research grant 1.B.397.20F.}}
\author{Markus A. Whiteland\thanks{Supported by the FNRS Research grant 1.B.466.21F.}}
\affil{Department of Mathematics, University of Li\`ege, Li\`ege, Belgium}
\affil{\texttt{\{m.rigo,m.stipulanti,mwhiteland\}@uliege.be}}
\date{}
\begin{document}

\maketitle

\begin{abstract}
  Generalizing the notion of the boundary sequence introduced by Chen and Wen, the $n$th term of the $\ell$-boundary sequence of an infinite word is the finite set of pairs $(u,v)$ of prefixes and suffixes of length $\ell$ appearing in factors $uyv$ of length $n+\ell$ ($n\ge \ell\ge 1$). Otherwise stated, for increasing values of $n$, one looks for all pairs of factors of length $\ell$ separated by $n-\ell$ symbols.

  For the large class of addable abstract numeration systems $S$, we show that if an infinite word is $S$-automatic, then the same holds for its $\ell$-boundary sequence. In particular, they are both morphic (or generated by an HD0L system). To precise the limits of this result, we discuss examples of non-addable numeration systems and $S$-automatic words for which the boundary sequence is nevertheless $S$-automatic and conversely, $S$-automatic words with a boundary sequence that is not $S$-automatic. In the second part of the paper, we study the $\ell$-boundary sequence of a Sturmian word. We show that it is obtained through a sliding block code from the characteristic Sturmian word of the same slope.
We also show that it is the image under a morphism of some other characteristic Sturmian word.
  \end{abstract}
  
  \keywords{Boundary sequences, Sturmian words, Numeration systems, Automata, Graph of addition}

\section{Introduction}
Let $\mathbf{x}$ be an infinite word, i.e., a sequence of letters belonging to a finite alphabet. Imagine a window of size~$n$ moving along $\mathbf{x}$. Such a reading frame permits to detect all factors of length~$n$ occurring in~$\mathbf{x}$. For instance, the factor complexity function of $\mathbf{x}$ mapping $n\in\mathbb{N}$ to the number of distinct factors of length~$n$ is extensively studied in combinatorics on words. Now let $n,\ell$ be such that $n\ge \ell$. Assume that within the sliding window, we only focus on its first and last $\ell$ symbols. Otherwise stated, for a factor $uyv$ of length $n$, we only consider its borders $u$ and $v$ of length~$\ell$.

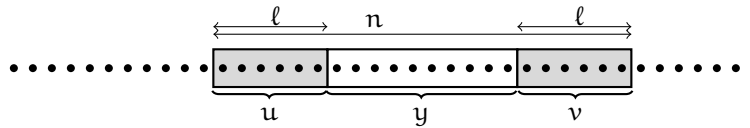
\begin{figure}[h!t]
  \begin{center}
  \begin{tikzpicture}
  \node [fit={(4.5,1) (7,1.5)}, inner sep=0pt, draw=black, thick] (x) {};
  \node [fit={(3,1) (4.5,1.5)}, inner sep=0pt, draw=black, thick, fill=gray!30] (u) {};
  \node [fit={(7,1) (8.5,1.5)}, inner sep=0pt, draw=black, thick, fill=gray!30] (v) {};
  \draw [thick, decoration={brace, mirror, raise=0.3cm}, decorate] (u.west) -- node[below=.4cm] {$u$} (x) {};
  \draw [thick, decoration={brace, mirror, raise=0.3cm}, decorate] (x.east) -- node[below=.4cm] {$v$} (v.east) ;
  \draw [thick, decoration={brace, mirror, raise=0.3cm}, decorate] (x.west) -- node[below=.4cm] {$y$} (v.west) ;
  \draw[<->] (3,1.7) -- (8.5,1.7) ;
  \node [fit={(5,1.8)}] (t) {$n$} ;
\draw[<->] (7,1.8) -- (8.5,1.8) ;
  \node [fit={(7.75,1.82)}] (t) {$\ell$} ;
  \draw[<->] (3,1.8) -- (4.5,1.8) ;
  \node [fit={(3.75,1.82)}] (t) {$\ell$} ;

  \foreach \r in {1,...,40}{
    \node[circle,fill=black,inner sep=0pt,minimum size=3pt] (a) at (.125+.25*\r,1.25) {};
    }
\end{tikzpicture}
\end{center}
\caption{A sliding window where we focus on two regions of a fixed length.}
  \label{fig:frame}
\end{figure}

For any given window length~$n$, we would like to determine what are the pairs of length-$\ell$ borders that may occur. This leads to the following definition, where, to simplify notation, we consider borders of factors of length $n + \ell$ rather than $n$.  

\begin{definition}
Let $\ell\in\mathbb{N}_{>0}$ and $\infw{x} \in A^{\N}$. For $n\ge \ell$, we define the
\emph{$n$th boundary set} by
\begin{equation*}
\partial_{\mathbf{x},\ell}[n] := \{(u,v)\in A^\ell\times A^\ell\mid uyv \text{ is a factor of }\mathbf{x} \text{ for some } y\in A^{n-\ell}\}
\end{equation*}
and call the sequence $\partial_{\mathbf{x},\ell} := (\partial_{\mathbf{x},\ell}[n])_{n\ge \ell}$ the {\em $\ell$-boundary sequence} of $\mathbf{x}$. When $\ell=1$, we write $\partial_{\mathbf{x},1}=\partial_\mathbf{x}$ and simply talk about the {\em boundary sequence}.
\end{definition}

The $\ell$-boundary sequence takes values in $2^{A^\ell\times A^\ell}$, and hence itself can be seen as an infinite word over a finite alphabet. We give an introductory example.

\begin{example}\label{exa:intro}
Consider the Fibonacci word $\mathbf{f}=0100101001\cdots$; the fixed point of the morphism $0\mapsto 01, 1\mapsto 0$.
We have $\partial_{\mathbf{f}} = a\, b\, b\, a\, b\, b\, b\, b\, a\, b\, b\, a\, b\, b\, b\, b\, a\, b\, b\, b\, b\, a\, b\, b\, a\, b\, b\, b\, b\cdots$,
where $a:=\{(0,0), (0,1), (1,0)\}$ and $b:=\{0,1\} \times \{0,1\}$.
For instance, $\partial_{\mathbf{f}}[1]=a$ because the length-$2$ factors of $\mathbf{f}$ are $00, 01, 10$, while $\partial_{\mathbf{f}}[2]=b$ because its length-$3$ factors are of the form $0\_0, 0 \_ 1, 1 \_ 0, 1 \_ 1$ (they are in fact $010, 001, 100, 101$).
The $2$-boundary sequence starts with
\[
\partial_{\mathbf{f},2} = a\ b\ c \ d\ e\ f\ b\ c\ d\ b\ c\ d\ e\ f\ b\ c\ d\ e\ f\ b\ c\ d\ b\ c\ d\cdots
\]
where
\begin{align*}
a &:=  \{(00,10),(01,00),(01,01),(10,01),(10,10)\}, \\
b &:= \{(00,00),(00,01),(01,01),(01,10),(10,00),(10,10)\}, \\
c &:= \{(00,01),(00,10),(01,00),(01,10),(10,00),(10,01)\}, \\
d &:= \{(00,00),(00,10),(01,00),(01,01),(10,01),(10,10)\}, \\
e &:= \{(00,01),(01,01),(01,10),(10,00),(10,01),(10,10)\}, \\
f &:= \{(00,10),(01,00),(01,01),(01,10),(10,01),(10,10)\}.
\end{align*}

The first element $\partial_{\mathbf{f},2}[2]=a$ is peculiar; it
corresponds exactly to the five length-$4$ factors occurring in
$\mathbf{f}$. Our \cref{prop:first-letter} shows that $a$ appears only once in $\partial_{\infw{f}}$. Then, e.g.,
$\partial_{\mathbf{f},2}[3] = b$ because the length-$5$ factors of
$\mathbf{f}$ are of the form $00\_00$, $00\_01$, $01\_01$, $01\_10$,
$10\_00$ and $10\_10$ (the factors are $00100$, $00101$, $01001$, $10100$, $10010$, and $01010$). For
length-$6$ factors, note that two are of the form $10u01$ for some
$u\in\{0,1\} \times \{0,1\}$. All letters, except $a$, appear infinitely often
in $\partial_{\mathbf{f},2}$: see \cref{the:sturmian}.
\end{example}

\subsection{Motivation and related work}

In combinatorics on words, borders and boundary sets are related to important concepts.
For instance, a word $v$ is {\em bordered} if there
exist $u,x,y$ such that $v=ux=yu$ and $0<|u|<|v|$. One reason to
study bordered words is Duval's theorem: for a sufficiently long word $v$, the
maximum length of unbordered factors of $v$ is equal to the
period of $v$ \cite{Duval}.
In formal language theory, a language $L$ is
{\em locally $\ell$-testable} (LT) if the membership of a word
$w$ in $L$ only depends on the prefix, suffix and factors of
length~$\ell$ of $w$. In \cite{Place}, the authors consider the
so-called {\em separating problem} of languages by LT languages;
they utilize {\em $\ell$-profiles} of a word, which can again be
related to boundary sets. Let us also mention that, in bioinformatics and computational biology, one of the aims is to reconstruct sequences from subsequences \cite{Margaritis}. To determine DNA segments by bottom-up analysis, {\em paired-end} sequencing is used. In this case both ends of DNA fragments of known length are sequenced. See, for instance, \cite{Fullwood}. This is quite similar to the theoretical concept we discuss here.

The notion of a ($1$-)boundary sequence was introduced by  Chen and Wen in
\cite{ChenWen} and was further studied in \cite{Guo}, where it is
shown that the boundary sequence of a \emph{$k$-automatic}
word (in the sense of Allouche and Shallit \cite{AS}: see \cref{def:Saut-kaut}) is $k$-automatic.
It is well known that a $k$-automatic word $\infw{x}$ is
\emph{morphic}, i.e., there exist morphisms $f\colon A \to A^*$
and $g\colon A \to B$ and a letter $a \in A$ such that $\infw{x} = g(f^{\omega}(a))$,
where $f^{\omega}(a) = \lim_{n\to \infty} f^n(a)$. However, $k$-automatic words (with $k$ ranging over the integers) do not capture all morphic words: a well-known
characterization of $k$-automatic words is given by Cobham~\cite{Cobham1972uniform} (the generating morphism $f$ maps each letter to a length-$k$ word). 
This paper is driven by the natural
question whether, in general, the $\ell$-boundary sequence of a morphic word is morphic.
In case such generating morphisms can be constructed, we have at our disposal a simple algorithm providing the set of length-$\ell$ borders in factors of all lengths.

We briefly present several situations in which the notion of
boundary sets is explicitly or implicitly used. In
\cite[Thm.~4]{CurrieHOR2019squarefree}, the authors study the boundary sequence to exhibit a squarefree word for which each subsequence arising from an arithmetic progression contains a square.
Boundary sets play an important role in the study of so-called \emph{$k$-abelian} and \emph{$k$-binomial complexities} of infinite words (for definitions, see \cite{Rigo2017}).
For instance, computing the $2$-binomial complexity of generalized Thue--Morse words \cite{lu20212binomial} requires inspecting pairs of prefixes and suffixes of factors, which is again related to the boundary sequence when these prefixes and suffixes have equal length. The $k$-binomial complexities of images of binary words under powers of the
Thue--Morse morphism are studied in \cite{RigoSW2022binomial}; there some
general properties of boundary sequences of binary words are required (see~\cite[Lem.~4.6]{RigoSW2022binomial}).  
Moreover, if $\partial_{\mathbf{x}}$ is automatic, then the
abelian complexity of the image of $\mathbf{x}$ under a
so-called Parikh-constant morphism is automatic \cite{ChenWen}.
Guo, L\"u, and Wen combine this result with theirs in \cite{Guo} to
establish a large family of infinite words with automatic abelian
complexity.

Let $k\ge 1$. We let $\equiv_k$ denote the $k$-abelian
equivalence, i.e., $u \equiv_k v$ if the words $u$ and $v$ share the same set of factors of length at most $k$ with the same multiplicities \cite{KSZ}. For $u$ and $v$ equal length factors
of a Sturmian word $\mathbf{s}$, we have $u\equiv_k v$ if and only if they share a common prefix and a
common suffix of length $\min\{|u|, k-1\}$ and $u\equiv_1 v$ \cite[Prop.~2.8]{KSZ}. 
Under the assumption that the largest power of a letter appearing in 
$\infw{s}$ is less than $2k-2$,
the requirement $u \equiv_1 v$ in the previous result may be
omitted \cite[Thm.~3.6]{PeltoWhiteland} (compare to \cref{prop:first-letter}). Thus the quotient of the set of factors of
length $n$ occurring in a Sturmian word by the
relation~$\equiv_k$ is completely determined by
$\partial_{\mathbf{s},k-1}[n-k+1]$ for large enough~$k$
(depending on $\mathbf{s}$). Other families of words with
$k$-abelian equivalence determined by the boundary sets are given in \cite[Prop.~4.2]{PeltoWhiteland}.


\subsection{Our contributions}

Up to our knowledge, we are the first to propose a systematic study of the
$\ell$-boundary sequences of infinite words. It is therefore
natural to consider the notion on well-known classes of words.
In this paper, we consider morphic words and Sturmian words.

Any morphic word is $S$-automatic for some abstract numeration system $S$ \cite{Maes}.
With \cref{the:kernel}, we prove that for a large class of numeration systems $S$, if $\mathbf{x}$ is an $S$-automatic word, then the boundary sequence $\partial_\mathbf{x}$ is again $S$-automatic. Our approach generalizes the arguments provided by \cite{Guo}. Considering exotic numeration systems allows a better understanding of underlying mechanisms, which do not arise in the ordinary integer base systems. In particular, we deal with addition within the numeration system; in integer base systems, the carry propagation is easy to handle (by a two-state finite automaton). Our arguments apply to so-called \emph{addable} numeration systems for which the graph of addition is regular (see \cref{def:addable} for details).

As an alternative, we observe that a classical effective procedure (\cref{thm:walnut}) transforming formulae to automata can be extended to addable abstract numeration
systems $S$. The $S$-automaticity of the
$\ell$-boundary sequence then follows from the fact that it
is definable by a first-order formula of the structure $\langle \N,+ \rangle$
extended with comparisons and indexing into an $S$-automatic sequence.

This alternative proof however hides the important details that
might help identifying the technical limits of the result: not all morphic words allow an addable system to work with.
However, the finiteness of a suitable kernel captures all morphic words (see \cref{thm:Ukernel}). 
To identify the contours of our result, we also discuss the case where $\mathbf{x}$ is $S$-automatic and $\partial_\mathbf{x}$ is not $S$-automatic.  To construct such examples, we have to consider non-addable numeration systems in \cref{ss:ce}.

We then turn to the other class of words under study. Letting
$\mathbf{s}$ be a Sturmian word with slope $\alpha$, with \cref{the:sturmian} we show that the $\ell$-boundary sequence of $\mathbf{s}$ is obtained through a sliding block code
from the \emph{characteristic Sturmian word of slope $\alpha$} (see \cref{sec:Sturmian} for a definition) up to the first letter.
This result
holds even for non-morphic Sturmian words, so for an arbitrary irrational $\alpha$. Where the techniques used in the first part of the paper have an automata-theoretic flavor, the second part relies on the geometric characterization of Sturmian words as codings of rotations.
We provide another description of the $\ell$-boundary sequence of a
Sturmian word as the morphic image of some characteristic Sturmian
word in \cref{prop:another-charact}.

This paper is a long version of \cite{RigoSW2022extended} presented at MFCS 2022. It contains many proofs (omitted due to space limitation) and, in particular, discussions about Sturmian words. This extended version includes work through examples using {\tt Walnut}. In \cref{ss:logic} we explicitly compute the $2$-boundary sequence of the Thue--Morse and Fibonacci words, see \cref{exa:walnut1,exa:walnut2}. In \cref{ss:non-add}, we present several examples of automatic sequences built on intrinsically non-addable numeration systems for which the boundary sequence is still automatic, see \cref{pro:add1,pro:add2}. Finally, the proof of \cref{the:kernel} has been strengthened to a larger setting to include addable abstract numeration systems.  This slightly broadens the presentation of the paper which is not limited to positional numeration systems anymore.

\section{Preliminaries}
Throughout this paper we let $A$ denote a finite alphabet. Then $A^n$
denotes the set of length-$n$ words and $A^ {\N}$ denotes the set of infinite words. Infinite words will usually, but not always, be indexed starting from $0$. 
They will also be written in bold.
For a finite word $u$, we let $u^\omega$ denote the concatenation of infinitely many copies of the word $u$, i.e.,  $u^\omega = u u u \cdots$.
For two words $u,v$ for which $w=uv$,  we let $wv^{-1}$ denote the prefix $u$ and $u^{-1}w$ the suffix $v$. 
For a finite or infinite word
$\infw{x}$, we let $\mathbf{x}[n]$ denote the letter at index $n$
(assuming it is well-defined for this value of $n$, e.g., if $\infw{x}$ is a $\ell$-boundary sequence, $n\ge\ell$). 
Similarly, for $m\geq n$ we set $\infw{x}[n,m] := \infw{x}[n]\cdots\infw[m]$.
For any integer $n\ge 0$,  we let $\Fac_n(\infw{x})$ denote the set of length-$n$ factors of $\infw{x}$; we write $\Fac(\infw{x}) = \cup_{n\ge 0} \Fac_n(\infw{x})$.
A factor $u$ of an infinite word $\infw{x} \in A^{\N}$ is called
\emph{right special} if there exist distinct letters $a,b\in A$
such that $ua$, $ub \in \Fac(\infw{x})$. We note that an infinite word~$\infw{x}$
is aperiodic if and only if it has a right special factor for each length.
For general references on numeration systems, see \cite{Fraenkel} and \cite[Chap.~1--3]{cant2010}. We assume that the reader has some knowledge in automata theory. For a reference see \cite{ShallitAutomata} or \cite[Chap.~1]{RigoBook}.

\subsection{Basic properties of boundary sequences}

Recall that in our definition of the boundary sequence, we inspect factors of length $n + \ell$ with $n\geq \ell$.
This implies that the prefix and suffix of length $\ell$ forming the boundary pair do not overlap.
The following observation justifies this choice in a sense.
\begin{proposition}
Let $\infw{x}$ be an aperiodic word and $\ell \geq 1$ be an integer. Then the boundary set
$\partial_{\infw{x},\ell}[m]$, with~$0\le m < \ell$, appears exactly once in the sequence
$(\partial_{\infw{x},\ell}[n])_{n\geq 0}$.
\end{proposition}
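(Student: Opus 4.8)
The plan is to first fix the meaning of $\partial_{\infw{x},\ell}[n]$ for the ``short'' indices $0\le n<\ell$ and record one elementary structural remark about them. Writing $\pref_\ell(w)$ and $\suff_\ell(w)$ for the length-$\ell$ prefix and suffix of a word $w$ of length at least $\ell$, one has, uniformly for every $n\ge 0$,
\[
\partial_{\infw{x},\ell}[n]=\{(\pref_\ell(w),\suff_\ell(w)) : w\in\Fac_{n+\ell}(\infw{x})\}=\{(\infw{x}[i,i+\ell-1],\infw{x}[i+n,i+n+\ell-1]) : i\ge 0\},
\]
which for $n\ge\ell$ is the original definition (with $|y|=n-\ell$) and for $n<\ell$ is the natural reading in which the prefix and suffix overlap in $\ell-n\ge 1$ letters. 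The key remark is that for $0\le n<\ell$ every pair $(u,v)\in\partial_{\infw{x},\ell}[n]$ is \emph{overlap-compatible}, meaning $\suff_{\ell-n}(u)=\pref_{\ell-n}(v)$; indeed both sides equal $\infw{x}[i+n,i+\ell-1]$ for the relevant occurrence index $i$. Let $O_n\subseteq A^\ell\times A^\ell$ denote the set of all overlap-compatible pairs, so that $\partial_{\infw{x},\ell}[n]\subseteq O_n$ whenever $0\le n<\ell$.

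Now suppose toward a contradiction that $\partial_{\infw{x},\ell}[m]=\partial_{\infw{x},\ell}[n]$ for some $n\ge 0$ with $n\ne m$ and $0\le m<\ell$. Set $p:=\min\{m,n\}$ and $q:=\max\{m,n\}$; then $p\le m<\ell$ and $q>p\ge 0$. By the remark, $\partial_{\infw{x},\ell}[p]\subseteq O_p$, hence also $\partial_{\infw{x},\ell}[q]\subseteq O_p$. Spelling this out at every occurrence index $i\ge 0$: the pair $(\infw{x}[i,i+\ell-1],\infw{x}[i+q,i+q+\ell-1])$ lies in $O_p$, i.e.\ $\suff_{\ell-p}(\infw{x}[i,i+\ell-1])=\pref_{\ell-p}(\infw{x}[i+q,i+q+\ell-1])$. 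Comparing just the first letters of these two (nonempty, since $p<\ell$) words gives $\infw{x}[i+p]=\infw{x}[i+q]$ for all $i\ge 0$, that is, $\infw{x}[j]=\infw{x}[j+(q-p)]$ for all $j\ge p$. As $q-p\ge 1$, this makes $\infw{x}$ eventually periodic, contradicting aperiodicity (equivalently, it would then fail to have a right special factor of some length, cf.\ the criterion recalled in the preliminaries). Hence $\partial_{\infw{x},\ell}[m]$ occurs only at index $m$, proving the statement.

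I do not expect a serious obstacle here: the argument is short and the only points needing care are the bookkeeping around the short indices and the overlap convention, and checking that the first-letter comparison is legitimate for \emph{all} $i$, which holds because $\infw{x}$ is infinite so every window is well-defined. A purely cosmetic alternative to the $\min/\max$ device is to split into the two cases $n>m$ (use $\partial_{\infw{x},\ell}[m]\subseteq O_m$) and $n<m$ (then $n<\ell$ as well, use $\partial_{\infw{x},\ell}[n]\subseteq O_n$); both run verbatim as above.
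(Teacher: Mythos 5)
Your proof is correct. It opens with the same structural observation as the paper's proof --- every pair $(u,v)\in\partial_{\infw{x},\ell}[m]$ with $m<\ell$ satisfies the overlap condition $\suff_{\ell-m}(u)=\pref_{\ell-m}(v)$ --- but the two arguments then diverge in how aperiodicity is brought to bear. The paper argues constructively: it invokes the characterization of aperiodicity via the existence of right special factors of every length, uses such a factor to build an explicit factor $x_1\cdots x_{n+\ell}$ with $x_\ell\neq x_{n+\ell-m}$, and thereby exhibits a concrete pair in $\partial_{\infw{x},\ell}[n]$ violating the overlap condition, so the two boundary sets differ. You instead argue by contradiction: if $\partial_{\infw{x},\ell}[q]\subseteq O_p$ with $q>p$, then comparing first letters of the overlapping blocks at every position forces $\infw{x}[i+p]=\infw{x}[i+q]$ for all $i$, i.e.\ eventual periodicity of period $q-p$, contradicting aperiodicity directly. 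Your route is slightly more elementary (it needs only the definition of eventual periodicity, not the right-special-factor criterion) and handles the two orderings of $m$ and $n$ uniformly via the $\min/\max$ device, whereas the paper's route produces an explicit distinguishing pair, which is a bit more informative. Both are complete; the only bookkeeping point, which you address, is making the definition of $\partial_{\infw{x},\ell}[n]$ for $n<\ell$ explicit (prefix and suffix overlapping in $\ell-n$ letters), which the paper also uses implicitly.
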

\begin{proof}
Fix an integer $m$ with $0 \leq m < \ell$. We show
that~$\partial_{\infw{x},\ell}[m] \neq \partial_{\infw{x},\ell}[n]$ for any $n > m$.
The claim follows straightforwardly from this observation. We first observe that any
boundary pair~$(u_1 \cdots u_{\ell},v_1 \cdots v_{\ell})$ in~$\partial_{\infw{x},\ell}[m]$
satisfies $u_{m+1} \cdots u_{\ell} = v_1 \cdots v_{\ell - m}$. In particular,
$u_{\ell} = v_{\ell - m}$ for any pair in $\partial_{\infw{x},\ell}[m]$. Consider then the
boundary set~$\partial_{\infw{x},\ell}[n]$ with~$n > m$. Let
$x = x_1 \cdots x_{n + \ell}$ be a factor of length $n + \ell$ such that
$x_1 \cdots x_{n + \ell - m - 1}$ is right special and $x_{\ell} \neq x_{n + \ell - m}$ (here $\ell < n + \ell - m$ so such a choice can be made). Now $x$ defines the boundary pair $( x_1 \cdots x_{\ell} , x_{n+1} \cdots x_{n + \ell} ) = (u_1\cdots u_{\ell},v_1 \cdots v_{\ell})$ for which $u_{\ell} \neq v_{\ell - m}$,
which shows that this pair cannot appear in $\partial_{\infw{x},\ell}[m]$. This concludes the proof.
\end{proof}

The above proposition is tight in the sense that there exist aperiodic words for which the boundary set
$\partial_{\infw{x},\ell}[\ell]$ appears infinitely often in  the boundary sequence $\partial_{\infw{x},\ell}$.
This can be seen, e.g., from \cref{prop:first-letter}. Another quick example for this is
the Champernowne word $\infw{c} = 0\, 1\, 00\, 01\, 10\, 11\cdots $ (the concatenation of
the radix-ordered binary representations of the naturals) for which
$\partial_{\infw{c},\ell} = (\{0,1\}^{\ell} \times \{0,1\}^{\ell})^{\omega}$.

\begin{lemma}
For any $\ell \geq 1$, the $\ell$-boundary sequence of an eventually periodic word is eventually periodic.
\end{lemma}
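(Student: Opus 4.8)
The plan is to show that for an eventually periodic word $\infw{x}$, the set of factors of each length $m$ is eventually periodic in $m$ (in an appropriate sense), and hence so is the derived sequence of boundary sets. Write $\infw{x} = w\infw{p}$ where $\infw{p} = p^{\omega}$ with $|p| = q$, and let $|w| = t$. First I would observe that once $m$ is large, say $m \geq t + q$, the set $\Fac_m(\infw{x})$ of length-$m$ factors is completely determined by which residue class modulo $q$ the parameter $m$ falls into: a factor of length $m \geq t + q$ either lies entirely within the eventually periodic tail $\infw{p}$ — and the factors of $\infw{p}$ of length $m$ are exactly the $q$ cyclic ``windows'' of the word $p^{\lceil m/q\rceil + 1}$, which as a set depends only on $m \bmod q$ — or it overlaps the finite prefix $w$, but there are only finitely many such positions and they stabilize once $m$ exceeds $t$. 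Made precise: there is a threshold $N$ and period $q$ such that $\Fac_{m+q}(\infw{x}) = p\cdot\Fac_m(\infw{x})\cdot$(suitable bookkeeping)... cleaner is to argue directly that $\Fac_m(\infw{x}) = \Fac_{m+q}(\infw{x})$ fails only in a controlled way; the robust statement I would actually prove is:

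\begin{claim*}
There exist integers $N \geq \ell$ and $q \geq 1$ such that for all $m \geq N$, the map $u \mapsto$ (the length-$(m+q)$ factor obtained by inserting one period $p$ into $u$ at a fixed occurrence of $\infw{p}$) induces a bijection between $\Fac_m(\infw{x})$ and $\Fac_{m+q}(\infw{x})$ that fixes the length-$\ell$ prefix and the length-$\ell$ suffix of every factor.
\end{claim*}

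Granting the claim, the key step is then immediate: the boundary set $\partial_{\infw{x},\ell}[n]$ is defined purely in terms of the length-$\ell$ prefixes and suffixes of factors of length $n+\ell$, so if the bijection above preserves those prefixes and suffixes, then $\partial_{\infw{x},\ell}[n] = \partial_{\infw{x},\ell}[n+q]$ for all $n$ with $n + \ell \geq N$, i.e. for all $n \geq N - \ell$. Combined with the trivial observation that the first $N - \ell$ terms form a finite prefix, this shows $\partial_{\infw{x},\ell}$ is eventually periodic with period (dividing) $q$. An alternative, perhaps slicker route avoiding the explicit bijection: note that $\partial_{\infw{x},\ell}[n]$ takes values in the \emph{finite} alphabet $2^{A^\ell \times A^\ell}$, and show that the sequence $(\Fac_n(\infw{x}))_{n}$ — which also takes values in a set that, for $n$ large, is drawn from a finite pool because the factor complexity of an eventually periodic word is bounded — is eventually periodic; since $\partial_{\infw{x},\ell}[n]$ is a fixed function of $\Fac_{n+\ell}(\infw{x})$, eventual periodicity transfers. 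Either way the engine is the boundedness of factor complexity plus the structural regularity of the periodic tail.

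The main obstacle is handling the interface between the transient prefix $w$ and the periodic part $\infw{p}$ cleanly: a length-$m$ factor of $\infw{x}$ that straddles the boundary between $w$ and $\infw{p}$ must be accounted for, and one must check these ``boundary-straddling'' factors do not spoil periodicity. The resolution is that for $m > t$ every such straddling factor starts at one of the finitely many positions $0, 1, \dots, t-1$ and is determined by its starting position together with $m$; as $m$ grows by $q$, the straddling factor starting at position $i$ simply gains one copy of $p$ in its periodic portion, so the collection of straddling factors is itself eventually periodic in $m$ with period $q$, with the length-$\ell$ prefix (lying inside $w$ once $m$... resp. determined by position $i$) unchanged and the length-$\ell$ suffix cycling through the same $q$ values as in the purely periodic case. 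I would isolate this as a short sub-lemma and then the rest is routine.
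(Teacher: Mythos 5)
Your proposal is correct and is essentially the paper's argument: the paper likewise writes $\infw{x}=uv^{\omega}$ and shows $\partial_{\infw{x},\ell}[n]=\partial_{\infw{x},\ell}[n+|v|]$ for all $n\ge\max\{\ell,|u|\}$ by observing that, for a factor starting at any fixed position $i$, lengthening it by one period $|v|$ leaves the length-$\ell$ prefix untouched and moves the length-$\ell$ suffix by exactly one period inside the periodic tail, hence leaves it unchanged. The only difference is that the paper argues occurrence-by-occurrence rather than via a bijection on the factor sets $\Fac_m(\infw{x})\to\Fac_{m+q}(\infw{x})$, which sidesteps the (minor, fixable) issue of whether your ``insert one period'' map is well defined independently of the chosen occurrence of the factor.
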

\begin{proof}
Let $\infw{x} = uv^{\omega}$. We claim that $\partial_{\infw{x},\ell}[n + |v|] = \partial_{\infw{x},\ell}[n]$
for all $n\geq \max\{\ell, |u|\}$. Indeed, consider a factor $x$ of length $ n + |v| + \ell$ occurring at
position $i$. We may write $x = x's$ with $|x'| = n + |v|$ and $|s|=\ell$. Since $n\geq |u|$, there exists a factorization
$v = v_1v_2$ such that $x'$ ends with $v v_1$, and $s$ is a prefix of $(v_2v_1)^{\omega}$. The factor of length
$n + \ell$ occurring at position $i$ is thus $x'(v_2v_1)^{-1}s$. We have shown that the boundary pairs
$(\infw{x}[i, i+\ell-1], \infw{x}[i + n,i + n + \ell-1])$ and $(\infw{x}[i, i+\ell-1], \infw{x}[i + n + |v|,i + n + |v| + \ell-1])$
are equal. This suffices for the proof.
\end{proof}

\subsection{Numeration systems and automatic words}

For general references about automatic words and abstract numeration systems, see \cite{AS} and \cite{Maes} or \cite[Chap.~3]{cant2010}. An \emph{abstract numeration system} (ANS) 
is a triple $S = (L,A,<)$ with $L$ an infinite regular language over the totally ordered alphabet $A$ (with $<$).  We say that $L$ is the \emph{numeration language}.
Genealogically (i.e., radix or length-lexicographic) ordering $L$ gives a one-to-one correspondence $\rep_S$ between $\N$ and $L$;
the \emph{$S$-representation} of $n$ is the $(n+1)$st word of $L$, and the inverse map, called the \emph{(e)valuation map}, is
denoted by $\val_S$.

\begin{example}\label{exa:little}
  Consider the ANS $S$ built on the language $\alpha^*\beta^*$ over the ordered alphabet $\{\alpha<\beta\}$. The first few words in the language are $\varepsilon,\alpha,\beta,\alpha\alpha,\ldots$. Hence, $\rep_S(3)=\alpha\alpha$ and $\val_S(\alpha\alpha)=3$.
\end{example}

In the following, we refer to the terminology introduced in \cite{peltomaki2021} (addable systems are called regular in \cite{Shallit2022logical}). It is convenient to introduce a new padding symbol~$\#$ which does not belong to the alphabet~$A$. We let $A_\#$ denote the set $A\cup\{\#\}$. We extend the evaluation map to $\#^*L$ by setting $\val_S(\#^n w)=\val_S(w)$ for all $w\in L$ and $n\in\mathbb{N}$.

\begin{definition}\label{def:addable}
  An abstract numeration system $S=(L,A,<)$ is {\em addable} if the following \emph{graph of addition}, denoted by $\mathcal{L}_+$, is regular:
\[
\left\{ \smatrix{ u \\ v \\ w} \in (\#^*L)^3 \cap (A_\# \times A_\# \times A_\#)^*
	\mid  \val_S(u) + \val_S(v) = \val_S(w) \right\} \setminus \smatrix{ \# \\ \# \\ \#} (A_\# \times A_\# \times A_\#)^*.
\]
\end{definition}

Notice that words in the numeration language $L$ do not start with $\#$; however, when dealing with tuples of such words, shorter $S$-representations are padded with leading $\#$'s to get words of equal length (so they can be processed by an automaton reading tuples of letters). Continuing \cref{exa:little}, for instance, the triplet $\smatrix{\#\alpha\\ \#\beta\\ \alpha\alpha}$ belongs to $\mathcal{L}_+$.

\begin{remark}
  Positional numeration systems (whose numeration language is regular) are special instances of ANS. Let us recall this classical setting. Let $U=(U_n)_{n\ge 0}$ be an increasing sequence of integers such that $U_0=1$. Any integer $n$ can be decomposed (not necessarily uniquely) as $n=\sum_{i=0}^t c_i\, U_i$ with non-negative integer coefficients $c_i$. The finite word $c_t\cdots c_0\in\mathbb{N}^*$ is a {\em $U$-representation} of $n$. If this representation is computed greedily \cite{Fraenkel,RigoBook}, then for all $j\le t$ we have $\sum_{i=0}^j c_i\, U_i<U_{j+1}$ and $\rep_U(n)=c_t\cdots c_0$ is said to be the {\em greedy} (or {\em normal}) $U$-representation of $n$.  By convention, the greedy representation of $0$ is the empty word~$\varepsilon$,  and the greedy representation of $n>0$ starts with a non-zero digit. An extra condition on the boundedness of $\sup_{i\ge 0} (U_{i+1}/U_i)$ implies that the digit-set for greedy representations is finite. For any $c_t\cdots c_0\in\mathbb{N}^*$, we let $\val_U(c_t\cdots c_0)$ denote the integer $\sum_{i=0}^t c_i\, U_i$. A sequence $U$ satisfying all the above conditions is said to define a {\em positional numeration system}. Any such system for which the numeration language $\rep_U(\mathbb{N})$ is regular is an ANS. For a positional numeration system, the existence of the digit $0$ permits to avoid the introduction of an extra symbol~$\#$. Padding can thus be achieved using leading zeroes.
\end{remark}

\begin{example}
  In this example, the numeration system has no digit~$0$ and has the property of being unambiguous. Consider the ANS~$S$ built on the language $L=\{1,2\}^*$ and the sequence $U=(2^n)_{n\ge 0}$. The first few words in $L$ are $\varepsilon,1,2,11,12,21,22,\ldots$. The $n$th word $d_k\cdots d_0$ in $L$ verifies $n=\sum_{i=0}^k d_i2^i$, but the greedy $U$-representation of $n$ is just its base-$2$ expansion over $\{0,1\}$ and is therefore not equal to $\rep_S(n)\in\{1,2\}^*$. The ANS $S$ is not, strictly speaking, a positional numeration system. Nevertheless the graph of addition for triplets of $S$-representations is regular. See \cref{fig:add12} where is depicted a DFA accepting the corresponding language reading least significant digit first, digits are processed from right to left. One simply has to deal with a carry $0,1,2$ stored within the state. Transitions are of the form
  $m\longrightarrow n$ with label $\smatrix{p\\ q\\ r}$,  for states $m,n\in\{0,1,2\}$ and letters $p,q,r\in\{\#,1,2\}$, if and only if $$m+p+q=r+2n$$
  where $\#$ is interpreted as $0$. 
  This is therefore an example of an addable ANS which is not a positional numeration system handling greedy expansions.
  \begin{figure}[h!tbp]
  \begin{center}
\begin{tikzpicture}[scale=.6,every node/.style={circle,minimum width=.5cm,inner sep=1pt}]
\node (0) at (0,0) [draw,circle] {\scriptsize{$0$}};
\node (1) at (10,0) [draw,circle] {\scriptsize{$1$}};
\node (2) at (20,0) [draw,circle] {\scriptsize{$2$}};
\draw [->] (0) edge[out=70,in=110,looseness=8] node[above=-.5] {\scriptsize{$\smatrix{1\\ 1\\ 2}$, $\smatrix{\#\\ i\\ i}$, $\smatrix{i\\ \#\\ i}$}} (0);
\draw [->] (1) edge[out=70,in=110,looseness=8] node[above=-.8] {\scriptsize{$\smatrix{1\\ 1\\ 1}$, $\smatrix{2\\ 1\\ 2}$ ,$\smatrix{1\\ 2\\ 2}$, $\smatrix{\#\\ 2\\ 1}$, $\smatrix{2\\ \#\\ 1}$}} (1);
\draw [->] (2) edge[out=70,in=110,looseness=8] node[above=-.5] {\scriptsize{$\smatrix{1\\ 2\\ 1}$, $\smatrix{2\\ 1\\ 1}$ ,$\smatrix{2\\ 2\\ 2}$}} (2);
\draw [->] (0) edge node[above=-.5,midway] {\scriptsize{$\smatrix{1\\ 2\\ 1}$, $\smatrix{2\\ 1\\ 1}$ ,$\smatrix{2\\ 2\\ 2}$}} (1);
 \draw [->] (1) edge node[above,midway] {\scriptsize{$\smatrix{2\\ 2\\ 1}$}} (2);
 \draw [->] (1) edge[out=225,in=-45] node[above=-.6] {\scriptsize{$\smatrix{\#\\ \#\\ 1}$, $\smatrix{1\\ \#\\ 2}$, $\smatrix{\#\\ 1\\ 2}$}} (0);
 \draw [->] (2) edge[out=225,in=-45] node[above=-.6] {\scriptsize{$\smatrix{i\\ \#\\ i}$, $\smatrix{\#\\ i\\ i}$ ,$\smatrix{1\\ 1\\ 2}$}} (1);
 \draw [->] (2) edge[out=225,in=-45] node[above] {\scriptsize{$\smatrix{\#\\ \#\\ 2}$}} (0);
 \draw [->] (-1,0) -- (0);
\end{tikzpicture}
\caption{A DFA accepting $\mathcal{L}_+$ reading words from left to right (where $i\in\{1,2\}$).}\label{fig:add12}
\end{center}
\end{figure}
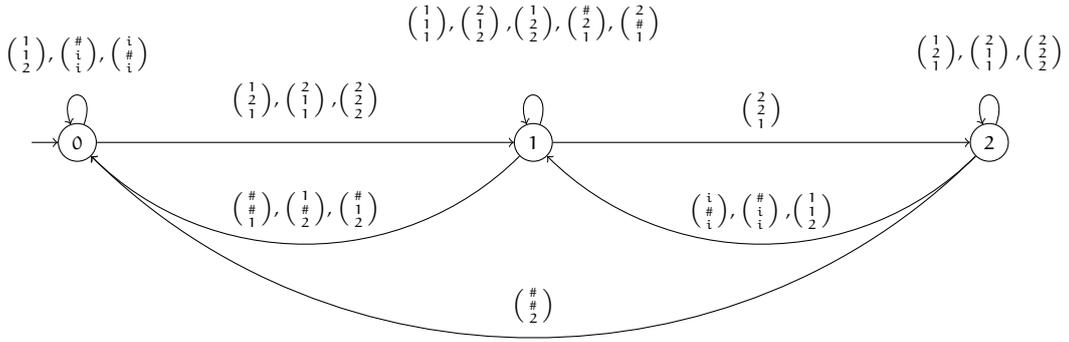
\end{example}

A \emph{deterministic finite automaton with output} (DFAO) $\mathcal{A}$ is a
DFA (with state set~$Q$) equipped with a mapping
$\tau\colon Q \to A$ (with $A$ an alphabet). The output $\mathcal{A}(w)$ of
$\mathcal{A}$ on a word $w$ is $\tau(q)$, where $q$
is the state reached by reading $w$ from the initial state.

\begin{definition}\label{def:Saut-kaut}
Let $S=(L,A,<)$ be an ANS.
  An infinite word $\mathbf{x}$ is {\em $S$-automatic} if there exists a DFAO $\mathcal{A}$ such that $\mathbf{x}[n]=\mathcal{A}(\rep_S(n))$.
  In particular, for an integer $k\ge 2$, if $\mathcal{A}$ is fed with the genealogically ordered language $L= \{\varepsilon\} \cup \{1,\ldots,k-1\}\{0,\ldots,k-1\}^*$, then $\mathbf{x}$ is said to be {\em $k$-automatic}.
If $\mathcal{A}$ is fed with the $U$-representations of integers, with $U$ a positional numeration system, $\infw{x}$ is said to be $U$-automatic.
\end{definition}


\begin{theorem}[\cite{Maes}]
\label{thm:morphicIsAutomatic}
A word $\infw{x}$ is morphic if and only if it is $S$-automatic
for some abstract numeration system $S$.
\end{theorem}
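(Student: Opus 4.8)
The statement is an equivalence, and I would prove the two implications separately, guided throughout by a single dictionary: a regular numeration language can be read as a state-labelled tree whose genealogical enumeration is a breadth-first traversal.

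For the implication \emph{$S$-automatic $\Rightarrow$ morphic}, let $S=(L,A,<)$ with $A=\{a_1<\dots<a_k\}$ and let $\mathcal{A}$ be a DFAO with $\infw{x}[n]=\mathcal{A}(\rep_S(n))$; write $\mathcal{M}=(Q,A,\delta,q_0,F)$ for its underlying complete DFA and $\tau\colon Q\to A$ for its output map, so that $L=\{w\in A^*\mid\delta(q_0,w)\in F\}$. The observation to exploit is that the genealogical enumeration of $A^*$ is exactly the breadth-first enumeration of the full $k$-ary tree $A^*$; labelling each node $w$ by $\delta(q_0,w)$, the sequence of labels read breadth-first is the fixed point of the $k$-uniform morphism $G\colon Q\to Q^k$, $G(q)=\delta(q,a_1)\cdots\delta(q,a_k)$. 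To force prolongability I would adjoin a fresh letter $\star$ with $G(\star)=\star\,G(q_0)$; then $G^{\omega}(\star)$ is well defined and lists, letter by letter, the states $\delta(q_0,w)$ for all $w\in A^*$ in genealogical order, with $\star$ standing for the root $\varepsilon$. Applying the morphism $h$ that erases every non-final state, maps a final state $q$ to $\tau(q)$, and maps $\star$ to $\tau(q_0)$ or to $\varepsilon$ according to whether $\varepsilon\in L$, one gets $h(G^{\omega}(\star))=\infw{x}$. Since $\infw{x}\in A^{\N}$ is infinite and morphic words are closed under images by arbitrary (possibly erasing) morphisms whenever the image is infinite, $\infw{x}$ is morphic.

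For the implication \emph{morphic $\Rightarrow$ $S$-automatic}, write $\infw{x}=g(f^{\omega}(a))$. I would first invoke the classical normalisation letting one assume $f$ non-erasing and prolongable at $a$ (so $f(a)=au$ with $u\in A^+$) while $g$ stays a coding; this is legitimate because $\infw{x}$ is infinite. Then $\infw{y}:=f^{\omega}(a)=a\prod_{m\ge0}f^m(u)$ decomposes into consecutive nonempty blocks, and each position $n\ge1$ lies in a unique block $f^m(u)=f^m(u[1])\cdots f^m(u[s])$ ($s=|u|$), hence is reached by a unique navigation path $(j_0,j_1,\dots,j_m)$: $j_0$ selects $c_0:=u[j_0]$, and for $t\ge1$, $j_t$ selects $c_t:=f(c_{t-1})[j_t]$, with $c_m=\infw{y}[n]$. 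I would take $S$ to be the ANS whose language $L$ is $\{\varepsilon\}$ together with all words $j_0j_1\cdots j_m$ over the ordered digit alphabet $\{1,\dots,D\}$, $D=\max_b|f(b)|$, that encode a valid path; validity ($j_t\le|f(c_{t-1})|$) is tracked by a finite automaton remembering the current letter $c_{t-1}$, so $L$ is regular. One then checks that $\rep_S$ is an order isomorphism from $(\N,<)$ onto $L$ with the genealogical order: longer blocks occur later and use strictly longer encodings, within one block the left-to-right order of leaves is exactly the lexicographic order on the tuples $(j_0,\dots,j_m)$, and bijectivity follows from $\sum_{j_0\le s}|f^m(u[j_0])|=|f^m(u)|$. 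Finally, the DFAO built on that automaton that outputs $g(c_m)$ after reading the whole word (and $g(a)$ on $\varepsilon$) realises $\infw{x}$, so $\infw{x}$ is $S$-automatic in the sense of \cref{def:Saut-kaut}.

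The two delicate points I expect are: in the first implication, the appeal to closure of morphic words under erasing morphisms — equivalently, the normalisation to a non-erasing generating morphism together with a coding — and the bookkeeping required to splice in $\star$ without disturbing the genealogical correspondence; and in the second implication, a rigorous verification that the navigation encoding is simultaneously injective and order-preserving, together with a consistent treatment of the boundary cases ($\varepsilon$, the leading letter $a$, and the block $f^0(u)=u$). Everything else is routine once the tree-versus-breadth-first dictionary is in place.
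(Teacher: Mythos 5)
Your argument is correct, and it reconstructs essentially the standard proof of this result: the paper itself states the theorem by citation to \cite{Maes} without giving a proof, and the cited construction proceeds exactly as you do — reading the genealogically ordered language as a breadth-first traversal of the state-labelled prefix tree (yielding a uniform morphism followed by an erasing morphism) for one direction, and building an abstract numeration system from the navigation paths of the generating morphism (a Dumont--Thomas-style encoding) for the other. The auxiliary facts you invoke (closure of morphic words under erasing morphic images when the image is infinite, and the normalisation to a non-erasing prolongable morphism with a coding) are the standard ingredients, so no gap remains beyond the bookkeeping you already flag.
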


We note that the proof of the above theorem shows that the equivalence is completely effective: given the morphisms producing the word $\infw{x}$, one can construct an ANS $S$ and a DFAO generating~$\infw{x}$, and vice versa.

Fix $s \in A^*$. For a word $\mathbf{x}$, define the subsequence $\mathbf{x} \circ s$ by
$(\mathbf{x}\circ s)[n]:=
\mathbf{x}[\val_S(p_{s,n}\, s)]$,
where $p_{s,n}$ is the $n$th word in the genealogically ordered language $Ls^{-1}=\{u\in A^*\mid us\in L\}$. The \emph{$S$-kernel} of the word $\mathbf{x}$ is defined as the set of words $\{\mathbf{x}\circ s \mid s\in A^*\}$. The following theorem is critical to our arguments. Details are given in \cite[Prop.~3.4.12--16]{cant2010}.
\begin{theorem}[\cite{Maes}]\label{thm:Ukernel}
A word $\mathbf{x}$ is $S$-automatic if and only if its $S$-kernel is finite.
\end{theorem}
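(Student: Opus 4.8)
The plan is to prove the two implications separately, using throughout that the numeration language $L$ is regular; this is the abstract-numeration-system analogue of the kernel characterisation of $k$-automatic sequences.

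For the implication ``$S$-automatic $\Rightarrow$ finite kernel'', suppose $\infw{x}$ is produced by a DFAO $\mathcal{A}=(Q,q_0,\delta,\tau)$ reading $L$, so that $\infw{x}[n]=\tau(\delta(q_0,\rep_S(n)))$; enlarging $Q$ if needed (take the product with the minimal DFA of $L$), I may assume $\mathcal{A}$ also recognises $L$, carrying a set $F\subseteq Q$ of $L$-accepting states together with a sink for $A^*\setminus L$. Fix $s\in A^*$ and let $t_s\colon Q\to Q$, $q\mapsto\delta(q,s)$, be the transformation induced by $s$. If $p_{s,n}$ is the $n$th word of $Ls^{-1}$ then $p_{s,n}s\in L$, hence $\rep_S(\val_S(p_{s,n}s))=p_{s,n}s$ and
\[
(\infw{x}\circ s)[n]=\infw{x}[\val_S(p_{s,n}s)]=\tau(\delta(q_0,p_{s,n}s))=\tau\bigl(t_s(\delta(q_0,p_{s,n}))\bigr).
\]
The enumeration $(p_{s,n})_n$ depends only on $Ls^{-1}=\{u:t_s(\delta(q_0,u))\in F\}$, which is itself determined by $t_s$, so the displayed identity exhibits $\infw{x}\circ s$ as a function of $t_s$ (and the fixed data of $\mathcal{A}$). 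Since $Q$ is finite there are finitely many transformations $t_s$, hence the $S$-kernel $\{\infw{x}\circ s:s\in A^*\}$ is finite.

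For the converse, assume the $S$-kernel is finite. I would first record two closure identities: for $a\in A$ and $s\in A^*$ we have $L(as)^{-1}=(Ls^{-1})a^{-1}$, and, writing $p_{as,n}$ for the $n$th word of $L(as)^{-1}$, the word $p_{as,n}a$ lies in $Ls^{-1}$ and
\[
(\infw{x}\circ(as))[n]=\infw{x}[\val_S(p_{as,n}a\,s)]=(\infw{x}\circ s)\bigl[\sigma_{s,a}(n)\bigr],
\]
where $\sigma_{s,a}(n)$ is the genealogical rank of $p_{as,n}a$ inside $Ls^{-1}$. The map $\sigma_{s,a}$ is strictly increasing, since genealogical order is preserved under appending a fixed letter, and it depends only on the regular languages $Ls^{-1}$ and $L(as)^{-1}$, hence only on $Ls^{-1}$ and $a$. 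Since $L$ is regular, the family $\{Ls^{-1}:s\in A^*\}$ is finite. I then build a DFAO reading $\rep_S(n)$ from the least significant digit: its states are the pairs $(\infw{x}\circ s,\,Ls^{-1})$ arising for $s\in A^*$, which are finitely many by the previous sentence and the hypothesis; the initial state is $(\infw{x},L)$, since $\infw{x}\circ\varepsilon=\infw{x}$; reading a letter $a$ sends $(\infw{x}\circ s,\,Ls^{-1})$ to $(\infw{x}\circ(as),\,L(as)^{-1})$, which is well defined by the two identities above; and the output of a state is $(\infw{x}\circ s)[0]$. Feeding this automaton the mirror image of $\rep_S(n)$ reaches the state $(\infw{x}\circ\rep_S(n),\,L\rep_S(n)^{-1})$, whose output is $(\infw{x}\circ\rep_S(n))[0]=\infw{x}[\val_S(\rep_S(n))]=\infw{x}[n]$, because $\varepsilon$ is the genealogically smallest word of $L\rep_S(n)^{-1}$. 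It remains to convert this least-significant-digit automaton into one reading $\rep_S(n)$ in the usual order, which is the standard equivalence between the two reading conventions for an abstract numeration system (see \cite{Maes} and \cite[Chap.~3]{cant2010}).

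The two closure identities are immediate from the definitions, and the finiteness counts are routine; the points where I expect the real work to lie are: (i) justifying that the second coordinate $Ls^{-1}$ genuinely has to be carried along, since two suffixes $s,s'$ may satisfy $\infw{x}\circ s=\infw{x}\circ s'$ while $Ls^{-1}\neq Ls'^{-1}$ and hence have different successor kernel elements; (ii) the passage from the least-significant-digit automaton to one matching the definition; and the harmless bookkeeping for suffixes $s$ with $Ls^{-1}$ finite, where $\infw{x}\circ s$ is merely a finite word, which affects neither the finiteness of the kernel nor the set of reachable output states.
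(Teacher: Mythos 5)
The paper does not prove this statement itself---it is quoted from \cite{Maes} with details deferred to \cite[Prop.~3.4.12--16]{cant2010}---and your argument is correct and follows essentially that standard route: the induced state transformations $t_s$ bound the kernel in one direction, and in the other the pairs $(\infw{x}\circ s, Ls^{-1})$ with the quotient identities $L(as)^{-1}=(Ls^{-1})a^{-1}$ and $(\infw{x}\circ(as))[n]=(\infw{x}\circ s)[\sigma_{s,a}(n)]$ yield a well-defined DFAO. Your one deferral, converting the least-significant-digit-first automaton into one reading $\rep_S(n)$ in the usual order, is indeed the standard DFAO-reversal construction and is legitimately cited rather than reproved.
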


Again, the theorem is completely effective: with the underlying ANS $S$ fixed, given (a Turing machine generating) $\infw{x}$, and (the cardinality of) the $S$-kernel, one can compute the DFAO generating~$\infw{x}$, and vice versa.

\begin{example}
Consider the Fibonacci numeration system based on the sequence of Fibonacci numbers $(F_n)_{n\ge 0}$ with $F_0 = 1$, $F_1 = 2$,
and $F_{n+1} = F_n + F_{n-1}$ for $n\geq 1$.
The first few terms of the associated subsequences $\mu_s:\mathbb{N}\to\mathbb{N}$, such that $(\mathbf{x}\circ s)[n]=\mathbf{x}[\mu_s(n)]$, are given in \cref{tab:fib-ker}.
One simply computes the numerical value of all the Fibonacci representations with the suffix $s$.
\end{example}

   \begin{table}[h!t]
\centering
\begin{tabular}{r|lr|l}
    $s$ & $(\mu_s(n))_{n\ge 0}$ & $s$ & $(\mu_s(n))_{n\ge 0}$ \\
    \hline
$\varepsilon$ & 0, 1, 2, 3, 4, 5, 6, 7, 8, 9, \ldots & 01& 4, 6, 9, 12, 14, 17, 19, 22, 25, \ldots \\
0& 2, 3, 5, 7, 8, 10, 11, 13, 15, \ldots &  00& 3, 5, 8, 11, 13, 16, 18, 21, 24, \ldots\\
1 & 1, 4, 6, 9, 12, 14, 17, 19, 22, 25, \ldots & 10& 2, 7, 10, 15, 20, 23, 28, 31, 36, 41, \ldots  \\
   \end{tabular}
    \caption{The first few terms of some subsequences $\mu_s$
    for the Fibonacci numeration system.}
    \label{tab:fib-ker}
  \end{table}

Notice that some kernel elements $\mathbf{x}\circ s$ may be finite; more precisely, this occurs exactly when the language $L s^{-1}$ is finite.
Our reasoning will not be affected by such particular cases, and we let the reader adapt it to such situations.


In \cref{sec:addable}, we require $S$ to be addable. Note that these assumptions of having a numeration language that is regular and addable are shared by many classical systems. For instance,  the usual integer base numeration systems or the Fibonacci numeration system have all the assumed properties. For the latter system, the minimal automaton (reading most significant digits first) of $\mathcal{L}_+$ has $17$ states (its transition table is given in \cite{MousaviShallit}). The one reading least significant digits first has $22$ states. The largest known family of positional systems with all these properties (addable with $\rep_U(\N)$ being regular) is the one of those based on a linear recurrence sequence whose characteristic polynomial is the minimal polynomial of a Pisot number \cite{BruyereHansel,RigoBook}. One practical difficulty when one wants to use automatic provers (such as {\tt Walnut}
\cite{Walnut}) is to be
able to provide the relevant automaton for addition.

\subsection{Link with first-order logic}\label{ss:logic}



The result stated below is at the origin of {\tt Walnut}. It relies on the effective transformation of formulae to automata.
It was first stated for integer-based systems. Making use of \cite[Lem.~37 and Thm.~55]{CharlierCS2022regular}, it was extended to addable systems:

\begin{theorem}[{\cite[Thm. 6.4.1]{Shallit2022logical}}]\label{thm:walnut}
Let $S$ be an addable numeration system. There is an algorithm that, given a formula $\varphi$ with no free
variables, phrased in first-order logic, using only the universal and existential
quantifiers, addition and subtraction of variables and constants, logical operations, comparisons, and indexing into a given $S$-automatic sequence $\mathbf{x}$, will decide the truth of the formula $\varphi$.
Furthermore, if $\varphi$ has $t\ge 1$ free variables, the algorithm produces a DFA that recognizes the language of all representations of $t$-tuples of natural numbers that make $\varphi$ evaluate to true.
\end{theorem}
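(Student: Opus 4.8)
The plan is to prove the theorem by structural induction on $\varphi$, maintaining the following invariant: for every subformula $\psi(x_1,\dots,x_t)$ one can effectively build a DFA over the alphabet $(A_\#)^t$ that accepts exactly the (leading-$\#$--padded, equal-length) $t$-tuples of $S$-representations of naturals satisfying $\psi$. Taking $t=0$ then yields the decision procedure for a sentence — build the automaton for $\varphi$ and test whether it accepts the empty word — while the case $t\ge 1$ gives the claimed DFA for formulas with free variables. Throughout, one must work with relations that are \emph{insensitive to leading padding}, i.e.\ stable under inserting or deleting leading $\#$'s on any track, and check that every construction below preserves this property. For integer bases the relevant bookkeeping is classical (the Büchi--Bruyère circle of ideas); for a general addable ANS it is exactly the role of \cite[Lem.~37 and Thm.~55]{CharlierCS2022regular} to make it go through, which is why the integer-base argument can then be transported essentially verbatim.

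\emph{Base case.} There are three kinds of atomic formulas. A linear (in)equality $\sum_i a_i x_i + b \mathrel{R} \sum_j a'_j x_j + b'$ with $R\in\{<,\le,=,\ge,>\}$ is handled by moving all terms to one side and expressing it through finitely many applications of the relation $\val_S(u)+\val_S(v)=\val_S(w)$, recognized by $\mathcal{L}_+$ precisely because $S$ is addable; a constant $c$ contributes the fixed (hence regular) word $\rep_S(c)$; subtraction $x-y$ is eliminated by a fresh variable $z$ with $y+z=x$; and the comparison $\val_S(u)<\val_S(v)$ is itself regular, being the lexicographic order on padded words where $\#$ is declared the least letter. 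An atom $\mathbf{x}[\tau]=a$, with $a$ a letter of the output alphabet of $\mathbf{x}$, is recognized by the DFAO defining $\mathbf{x}$ (whose inputs evaluating to $a$ form a regular set); an atom $\mathbf{x}[\tau_1]=\mathbf{x}[\tau_2]$ is obtained as a product over the output alphabet; and an arbitrary term built from $+$ and $-$ occurring in an index position is reduced to a single variable by existentially quantifying over its value, so it reduces to the previous items.

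\emph{Inductive step.} Boolean connectives correspond to intersection, union, and complementation of regular languages over $(A_\#)^t$. The existential quantifier $\exists x_i\,\psi$ corresponds to the length-preserving projection $(A_\#)^t \to (A_\#)^{t-1}$ erasing the $i$th track, followed by determinization; $\forall$ is handled dually as $\neg\exists\neg$. I expect the main obstacle to be concentrated here: projection (and, more mildly, the presence of constants of differing lengths) can produce candidate tuples that are not in canonical shortest form or that carry spurious leading $\#$'s, so one must argue that the class of padding-insensitive regular relations is closed under projection and that nothing is lost or spuriously gained in the process. In the integer-base setting this is the classical argument; in the addable-ANS setting it rests on the structural facts about $\mathcal{L}_+$ recalled in the cited lemmas. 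Since every step produces an explicit automaton, the whole procedure is effective, which yields both the decidability claim for sentences and the promised DFA for formulas with free variables.
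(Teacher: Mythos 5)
The paper does not prove this theorem itself --- it is imported from Shallit's book together with the extension to addable ANS via the cited results of Charlier, Cisternino, and Stipulanti --- and your sketch is exactly the standard B\"uchi--Bruy\`ere-style induction (atomic formulas via $\mathcal{L}_+$, the genealogical order, and the DFAO; connectives via Boolean closure; quantifiers via projection and determinization) that underlies that proof. Your outline is correct and correctly isolates the one genuinely delicate point, namely that padding-insensitive regular relations over an addable ANS are closed under projection, which is precisely what the cited lemmas supply; no further comment is needed.
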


As already mentioned in \cite{Guo}, the boundary sequence of a $k$-automatic
word~$\mathbf{x}$ may be defined by means of a first-order formula and
therefore automaticity readily follows. This extends to addable systems~$S$: let $\mathbf{x}\in B^\N$ be $S$-automatic
for an addable system $S$. The above theorem implies that, for all $b\in B$, 
we have a formula $\varphi_b(n)$ which is true if and only if
$\mathbf{x}[n]=b$. We have
$(u_1\cdots u_\ell,v_1\cdots v_\ell)\in\partial_{\mathbf{x},\ell}[m]$ if and
only if
\[
(\exists i)\bigwedge_{j=1}^{\ell}\varphi_{u_j}(i+j-1)\wedge \bigwedge_{j=1}^{\ell}\varphi_{v_j}(i+m + j-1).
\]
For each subset $R$ of $A^{\ell} \times A^{\ell}$ there is thus a formula $\psi_R(m)$ which is true if and only if $\partial_{\mathbf{x},\ell}[m] = R$.
We may now apply \cref{thm:walnut} to conclude that
$\partial_{\mathbf{x},\ell}$ is $S$-automatic. These arguments appear in \cite[\S 8.1.11]{Shallit2022logical} in the case $\ell = 1$.

\begin{example}\label{exa:walnut1}
  We use the strategy described in \cite[Sec.~8.1.11]{Shallit2022logical},
  where the boundary sequence of the Fibonacci word is computed. Here, we consider the
  Thue--Morse word~$\mathbf{t}$ and show how to get its $2$-boundary sequence
$$\partial_{\mathbf{t},2} =  abcdedfdgdgdgdcdgdgdgdgdgdgdgdfdgd\cdots$$
  using {\tt Walnut}. Let $\alpha,\beta,\gamma,\delta\in\{0,1\}$. If there exists some position $i$ such that $(\mathbf{t}[i]\mathbf{t}[i+1],\mathbf{t}[i+n]\mathbf{t}[i+n+1])=(\alpha\beta,\gamma\delta)$, then this pair belongs to $\partial_{\mathbf{t},2}[n]$. In {\tt Walnut}, depending on the value of $\alpha,\beta,\gamma,\delta$, we provide sixteen definitions of the form
\begin{center}
\verb!def TMbound!$\alpha\beta\gamma\delta$ \verb!"Ei T[i]=@!$\alpha$\verb! & T[i+1]=@!$\beta$\verb! & T[i+n]=@!$\gamma$\verb! & T[i+n+1]=@!$\delta$\verb!";!
\end{center}
which create deterministic automata recognizing base-$2$ expansions of the sets $$T_{\alpha\beta,\gamma\delta}=\{n\in\mathbb{N}:(\alpha\beta,\gamma\delta)\in\partial_{\mathbf{t},2}[n]\}.$$ In particular, \verb!$TMbound!$\alpha\beta\gamma\delta$\verb!(n)! evaluates to {\tt TRUE} whenever $n$ belongs to $T_{\alpha\beta,\gamma\delta}$. A direct inspection shows that only seven different $2$-boundary sets occur in $\partial_{\mathbf{t},2}$:
\begin{align*}
a &:= \{0,1\}^2\times \{0,1\}^2 \setminus \{(00,00),(00,01),(01,11), (10,00), (11,10),(11,11) \}, \\
b &:= \{0,1\}^2\times \{0,1\}^2 \setminus \{(00,00), (00,11), (01,10), (10,01), (11,00), (11,11)\},\\
c &:=\{0,1\}^2 \times \{0,1\}^2 \ \setminus \ \{ (00,10), (01,00), (10,11), (11,01) \},\\
d &:= \{0,1\}^2 \times \{0,1\}^2 \ \setminus \ \{(00,00), (00,11), (11,00), (11,11)\},\\
e &:= \{0,1\}^2 \times \{0,1\}^2 \ \setminus \ \{ (00,11), (11,00) \},\\
f &:= \{0,1\}^2 \times \{0,1\}^2 \ \setminus \ \{(00,01), (01,11), (10,00), (11,10) \},\\
g &:= \{0,1\}^2 \times \{0,1\}^2.
\end{align*}
This can be checked as follows. For the set $a$, we provide the following definition
\begin{verbatim}
def TMbounda "~$TMbound0000(n) & ~$TMbound0001(n) & $TMbound0010(n) & 
$TMbound0011(n) & $TMbound0100(n) & $TMbound0101(n) & $TMbound0110(n) & 
~$TMbound0111(n) & ~$TMbound1000(n) & $TMbound1001(n) & $TMbound1010(n) & 
$TMbound1011(n) & $TMbound1100(n) & $TMbound1101(n) & ~$TMbound1110(n) & 
~$TMbound1111(n)";
\end{verbatim}
where \verb!$TMbound0(n)! evaluates to {\tt TRUE} whenever $\partial_{\mathbf{t},2}[n]=a$. Similar definitions are readily written for $b,\ldots,g$. The following expression evaluates to {\tt TRUE}
\begin{verbatim}
eval TM2boundarycheck "An (n>1) => (($TMbounda(n) | $TMboundb(n) | $TMboundc(n) | 
$TMboundd(n) | $TMbounde(n) | $TMboundf(n) | $TMboundg(n)))":
\end{verbatim}
meaning that we are not missing any $2$-boundary set. 
We combine the seven automata produced by {\tt Walnut} accepting base-$2$ expansion of the integers $n$ such that $\partial_{\mathbf{t},2}[n]$ is a particular letter in $\{a,\ldots,g\}$ into the DFAO depicted in \cref{fig:dfaot2} using the command
\begin{verbatim}
combine TM2boundarySequence TMbounda TMboundb TMboundc
                            TMboundd TMbounde TMboundf TMboundg:
\end{verbatim}

\begin{figure}[h!tbp]
  \begin{center}
\begin{tikzpicture}[scale=.6,every node/.style={circle,minimum width=.5cm,inner sep=1pt}]

\node (0) at (0,0) [draw,circle] {\scriptsize{$\#$}};
 \node (1) at (2,0) [draw,circle] {\scriptsize{$\$ $}};
 \node (2) at (4,-2) [draw,circle] {\scriptsize{$a$}};
 \node (3) at (4,2) [draw,circle] {\scriptsize{$b$}};
 \node (4) at (6,-3) [draw,circle] {\scriptsize{$c$}};
 \node (5) at (6,0) [draw,circle] {\scriptsize{$d$}};
 \node (6) at (6,3) [draw,circle] {\scriptsize{$e$}};
 \node (7) at (8,-2) [draw,circle] {\scriptsize{$f$}};
 \node (8) at (8,2) [draw,circle] {\scriptsize{$g$}};
 \draw [->] (0) edge[out=70,in=110,looseness=8] node[above] {\scriptsize{$0$}} (0);
 \draw [->] (0) edge node[above,midway] {\scriptsize{$1$}} (1);

 \draw [->] (1) edge node[below] {\scriptsize{$0$}} (2);

 \draw [->] (1) edge node[above] {\scriptsize{$1$}} (3);

 \draw [->] (2) edge node[above] {\scriptsize{$0$}} (4);

 \draw [->] (2) edge node[above] {\scriptsize{$1$}} (5);

 \draw [->] (3) edge node[above] {\scriptsize{$1$}} (5);

 \draw [->] (3) edge node[above] {\scriptsize{$0$}} (6);

 \draw [->] (4) edge node[right] {\scriptsize{$1$}} (5);

 \draw [->] (4) edge[out=0,in=230] node[below] {\scriptsize{$0$}} (7);

 \draw [->] (5) edge[out=340,in=20,looseness=7] node[right] {\scriptsize{$1$}} (5);

 \draw [->] (5) edge[out=60,in=200] node[above] {\scriptsize{$0$}} (8);

 \draw [->] (6) edge node[left] {\scriptsize{$1$}} (5);

 \draw [->] (6) edge node[above] {\scriptsize{$0$}} (8);

 \draw [->] (7) edge node[above] {\scriptsize{$0$}} (4);

 \draw [->] (7) edge node[right] {\scriptsize{$1$}} (5);

 \draw [->] (8) edge[out=240,in=40] node[below] {\scriptsize{$1$}} (5);

 \draw [->] (8) edge[out=345,in=15,looseness=7] node[right] {\scriptsize{$0$}} (8);

 \draw [->] (-1,0) -- (0);
\end{tikzpicture}
\caption{A DFAO producing $\partial_{\mathbf{t},2}$.}\label{fig:dfaot2}
\end{center}
\end{figure}
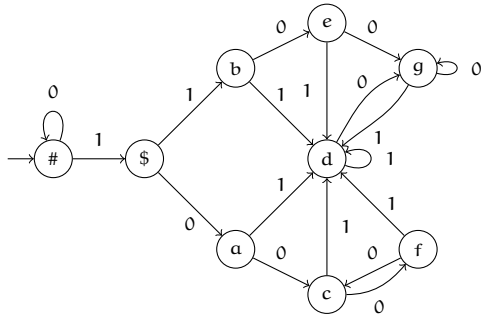
Inspecting \cref{fig:dfaot2}, we notice that $a$, $b$ and $e$ appear exactly once. On the other hand, we have $\partial_{\mathbf{t},2}[n] = c$ precisely when $n$
is an even power of $2$, while $\partial_{\mathbf{t},2}[n]= f$ if and only if $n$ is an odd
power of $2$ strictly larger than $2$. Finally $\partial_{\mathbf{t},2}[n] = d$ precisely when $n$ is odd and at least $5$.

Through the effective conversion of the automaton to a morphic representation of the word, we get the $2$-uniform morphism generating the $2$-boundary sequence (prepended with two symbols $\# \$ $, because the $2$-boundary sequence is indexed starting at $2$):
\[
\begin{cases}
\# 			& \mapsto \# \$ \\
\$ 			& \mapsto ab \\
a 	& \mapsto cd
\end{cases}
\qquad\qquad
\begin{cases}
b 	& \mapsto ed \\
c 	& \mapsto fd \\
d 	& \mapsto gd 
\end{cases}
\qquad\qquad
\begin{cases}
e 	& \mapsto gd \\
f 	& \mapsto cd \\
g 	& \mapsto gd.
\end{cases}
\]
\end{example}

\begin{example}\label{exa:walnut2}
  For the Fibonacci word~$\mathbf{f}$, the strategy is similar to the one given in the previous example. Instead of binary expansions, we simply make use of Fibonacci expansions which are also available in {\tt Walnut}. For instance
\begin{verbatim}
def Fbound00 "?msd_fib Ei F[i]=@0 & F[i+n]=@0";
\end{verbatim}
  is such that \verb!Fbound00(n)! evaluates to {\tt TRUE} whenever $(0,0)$ belongs to $\partial_{\mathbf{f}}[n]$. The details and the resulting automaton can be found in \cite[Sec.~8.1.11]{Shallit2022logical}. 
  Doing a similar job for the $2$-boundary sequence, here at most $9$ (and not sixteen, as in the previous example) boundary pairs may occur because $\mathbf{f}$ does not contain $11$ as a factor in $\mathbf{f}$. We can check that $\partial_{\mathbf{f},2}$ is made of five different boundary sets. The resulting DFAO is depicted in \cref{fig:fib2bound} (the sink state reached when reading a factor $11$ is not represented).
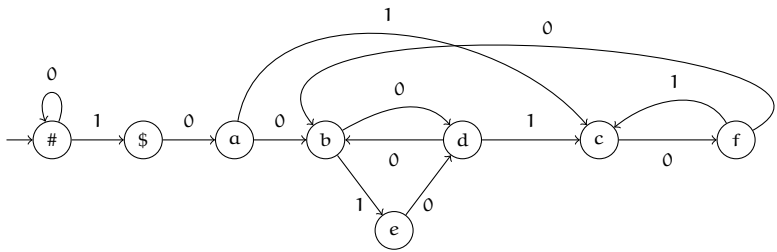
\begin{figure}[h!tbp]
  \begin{center}
\begin{tikzpicture}[scale=.6,every node/.style={circle,minimum width=.5cm,inner sep=1pt}]

\node (0) at (0,0) [draw,circle] {\scriptsize{$\#$}};
 \node (1) at (2,0) [draw,circle] {\scriptsize{$\$ $}}; 
 \node (2) at (4,0) [draw,circle] {\scriptsize{$a$}};
 \node (3) at (6,0) [draw,circle] {\scriptsize{$b$}};
 \node (4) at (12,0) [draw,circle] {\scriptsize{$c$}};
 \node (5) at (9,0) [draw,circle] {\scriptsize{$d$}};
 \node (6) at (7.5,-2) [draw,circle] {\scriptsize{$e$}};
 \node (7) at (15,0) [draw,circle] {\scriptsize{$f$}};
 \draw [->] (0) edge[out=70,in=110,looseness=8] node[above] {\scriptsize{$0$}} (0);
 \draw [->] (0) edge node[above,midway] {\scriptsize{$1$}} (1);
 \draw [->] (1) edge node[above] {\scriptsize{$0$}} (2);
 \draw [->] (2) edge node[above] {\scriptsize{$0$}} (3);
 \draw [->] (2) edge[out=80,in=-230] node[above] {\scriptsize{$1$}} (4);
 \draw [->] (3) edge[out=30,in=-230] node[above] {\scriptsize{$0$}} (5);
 \draw [->] (3) edge node[below] {\scriptsize{$1$}} (6);
 \draw [->] (6) edge node[below] {\scriptsize{$0$}} (5);
 \draw [->] (5) edge node[below] {\scriptsize{$0$}} (3);
 \draw [->] (5) edge node[above] {\scriptsize{$1$}} (4);
 \draw [->] (4) edge node[below] {\scriptsize{$0$}} (7);
 \draw [->] (7) edge[out=120,in=40] node[above] {\scriptsize{$1$}} (4);
  \draw [->] (7) edge[out=30,in=-230] node[above] {\scriptsize{$0$}} (3);
 \draw [->] (-1,0) -- (0);
\end{tikzpicture}
 \caption{A DFAO producing the $2-$boundary sequence of $\mathbf{f}$.}
 \label{fig:fib2bound}
\end{center}
\end{figure}
We thus get the morphism generating the $2$-boundary sequence of~$\mathbf{f}$ prepended with two symbols $\# \$ $:
\[
\begin{cases}
\# 			& \mapsto \# \$ \\
\$ 			& \mapsto a \\
a 	& \mapsto bc
\end{cases}
\qquad\qquad
\begin{cases}
  b& \mapsto de\\
  c& \mapsto f\\
  \end{cases}
\qquad\qquad
\begin{cases}
  d& \mapsto bc\\
  e& \mapsto d\\
  f& \mapsto bc.\\
\end{cases}
\]

Finally, we also computed the DFAO for the boundary sequence of the Tribonacci word (reading Tribonacci expansions),
the fixed point of the morphism $0 \mapsto 01$, $1\mapsto 02$, $2\mapsto 0$. Surprisingly, the (minimal) automaton produced
by {\tt Walnut} has 118 states. It also reveals that there are exactly seven boundary sets appearing
in the boundary sequence of the Tribonacci word. One can show, using {\tt Walnut}, that the first and second sets
in the boundary sequence appear exactly once, otherwise the boundary sets appear infinitely often.
\end{example}

\section{On the boundary sequences of automatic words}\label{sec:automatic}
In this section we provide the first of our main contributions, an alternative proof (not relying on \cref{thm:walnut}) to the fact that
an $S$-automatic word has an $S$-automatic boundary sequence whenever $S$ is addable.
We then show that this result does not necessarily hold for a non-addable system.

\subsection{Addable systems: automatic boundary sequences}\label{sec:addable}

For the sake of presentation, we only consider the case of the $1$-boundary sequence.
Our proof provides a precise description of a set containing the
$S$-kernel of $\partial_\mathbf{x}$ in terms of three equivalence
relations based on the kernel of~$\mathbf{x}$, the graph of addition,
and the numeration language; see~\eqref{eq:new-equiv-rel}. This set is finite, and so \cref{thm:morphicIsAutomatic} gives the claim.
In particular, one is the Myhill--Nerode congruence associated with the graph of addition since we have to consider the elements $\mathbf{x}[i]$ and $\mathbf{x}[i+m]$ for some $m>0$.
For $\ell>1$, the only technical difference is that we have to consider longer factors $\mathbf{x}[i] \cdots \mathbf{x}[i+\ell-1]$ and $\mathbf{x}[i+m] \cdots \mathbf{x}[i+m+\ell-1]$.

\begin{theorem}\label{the:kernel}
Let $S=(L,A,<)$ be an addable ANS and let $\mathbf{x}$ be an $S$-automatic word. The boundary sequence $\partial_\mathbf{x}$ is $S$-automatic.
\end{theorem}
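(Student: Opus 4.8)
The plan is to prove finiteness of the $S$-kernel of $\partial_{\infw{x}}$ by showing that each kernel element $\partial_{\infw{x}} \circ s$ is determined by a bounded amount of data extracted from the $S$-kernel of $\infw{x}$, the graph of addition $\mathcal{L}_+$, and the numeration language $L$. Recall that $(\partial_{\infw{x}}\circ s)[n] = \partial_{\infw{x}}[\val_S(p_{s,n}s)]$, where $p_{s,n}$ is the $n$th word of $Ls^{-1}$. So fix $s \in A^*$ and write $m = m_n = \val_S(p_{s,n}s)$; we need to know, for each pair $(b_0,b_1) \in A\times A$, whether there exists a position $i$ with $\infw{x}[i] = b_0$ and $\infw{x}[i+m] = b_1$. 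The key is that membership $(b_0,b_1)\in\partial_{\infw{x}}[m]$ is decided by the data $(\infw{x}\circ u, \infw{x}\circ w, [\text{the }\mathcal{L}_+\text{-state after reading a representation of }(i,m,i+m)])$ ranging over all $i$; since the $S$-kernel is finite (\cref{thm:Ukernel}) and $\mathcal{L}_+$ has finitely many states, there are finitely many such data tuples.

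Concretely, first I would set up three equivalence relations as promised by the displayed reference \eqref{eq:new-equiv-rel}: one is the relation "$\infw{x}\circ u = \infw{x}\circ u'$" on $A^*$ (finitely many classes because the $S$-kernel is finite); one is the Myhill--Nerode congruence of the regular language $L$ itself (so that we can track which completions $p$ of a prefix yield $ps \in L$); and one is the Myhill--Nerode congruence of $\mathcal{L}_+$ read, say, most-significant-digit-first on triples $\smatrix{u \\ v \\ w}$, which lets us track the carry/state reached after processing a common prefix of the three representations. Next I would argue that the product of these three equivalences still has finitely many classes, and that the class of $s$ — more precisely, a finite ``profile'' attached to $s$ — determines the sequence $\partial_{\infw{x}}\circ s$ entirely. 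The mechanism is: reading $\rep_S(i)$, $\rep_S(m)$, $\rep_S(i+m)$ synchronously in $\mathcal{L}_+$, the relation $m = \val_S(p_{s,n}s)$ forces the low-order part of $\rep_S(m)$ to be (a padding of) $s$, and the state of $\mathcal{L}_+$ reached after the suffix $s$ has been consumed, together with the kernel elements $\infw{x}\circ(\text{suffix of }\rep_S(i))$ and $\infw{x}\circ(\text{suffix of }\rep_S(i+m))$ matching that suffix position, is exactly what one needs to decide the existence of an appropriate $i$. Quantifying existentially over $i$ amounts to a projection, which preserves regularity/finiteness, so the final data determining $(\partial_{\infw{x}}\circ s)[n]$ depends only on finitely much about $s$ and $n$ via these congruences.

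With this in hand I would conclude: the map $s \mapsto \partial_{\infw{x}}\circ s$ factors through the (finite) product of the three equivalence classes just described, hence the $S$-kernel of $\partial_{\infw{x}}$ is finite, and \cref{thm:Ukernel} (or equivalently \cref{thm:morphicIsAutomatic} via morphicity) yields that $\partial_{\infw{x}}$ is $S$-automatic. For $\ell > 1$ the same argument applies verbatim, replacing single letters $\infw{x}[i]$, $\infw{x}[i+m]$ by the length-$\ell$ blocks $\infw{x}[i]\cdots\infw{x}[i+\ell-1]$ and $\infw{x}[i+m]\cdots\infw{x}[i+m+\ell-1]$; one reads $\ell$ consecutive kernel elements instead of one, which only enlarges the (still finite) index set.

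The main obstacle I anticipate is the bookkeeping around addition and padding: one must carefully handle the interaction between (i) the $\#$-padding of shorter representations inside triples of $\mathcal{L}_+$, (ii) the fact that $\val_S(p_{s,n}s)$ fixes only a \emph{suffix} $s$ of $\rep_S(m)$ while the length of $p_{s,n}$ grows with $n$, and (iii) synchronizing this with the kernel decomposition $\infw{x}\circ s$, which also reads suffixes. Making precise exactly which finite datum ``the state of $\mathcal{L}_+$ after reading $s$'' means — and checking that it is well-defined independently of the unread high-order digits $p_{s,n}$, i.e.\ that it genuinely depends only on a Myhill--Nerode class — is the delicate point; everything else is routine closure-under-projection and finiteness reasoning. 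One should also address the degenerate case where $Ls^{-1}$ is finite (so $\infw{x}\circ s$ is a finite word), noted already in the excerpt as not affecting the argument.
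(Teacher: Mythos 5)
Your proposal follows essentially the same route as the paper's proof: it bounds the $S$-kernel of $\partial_\mathbf{x}$ by the product of the three equivalences of \eqref{eq:new-equiv-rel} (equality of kernel elements of $\mathbf{x}$, of quotients of $L$, and of quotients of $\mathcal{L}_+$ by suffix triples), and the ``delicate point'' you flag is resolved exactly as you anticipate, namely by working with the right quotients $\mathcal{L}_+\smatrix{s \\ t \\ r}^{-1}$, which are attached to the suffix and are independent of the unread high-order digits. The one step your sketch leaves implicit --- how the existential quantification over $i$ is actually reduced to the kernel elements $\mathbf{x}\circ t$ and $\mathbf{x}\circ r$ --- is handled in the paper by genealogically ordering the triples of $\mathcal{L}_+\smatrix{s \\ t \\ r}^{-1}$ whose first component lies in $\#^*p_{s,n}$ and checking that the induced index maps $\lambda_{s,t,r,n}$, $\mu_{s,t,r,n}$ into $Lt^{-1}$ and $Lr^{-1}$ depend only on the equivalence classes of $(s,t,r)$, which is the ``routine'' bookkeeping you defer.
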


\begin{proof}
 Thanks to \cref{thm:Ukernel}, the $S$-kernel of $\mathbf{x}$ is finite, say of cardinality $m$.
 Moreover, since $L$ and $\mathcal{L}_+$ are regular, the following two sets of languages are finite by the Myhill--Nerode theorem \cite[Sec.~3.9]{ShallitAutomata}, say of cardinality
 $k$ and $\ell$, respectively:
\[
\{Ls^{-1}\mid s\in A^*\} \quad \text{and} \quad
\Big\{\mathcal{L}_+ \smatrix{s \\ t \\ r}^{-1} \mathrel{\Big|} \smatrix{s \\ t \\ r} \in (A \times A_\# \times A_\#)^* \Big\}.
\]
Let $\partial_\mathbf{x}$ be the boundary sequence of $\mathbf{x}$. An element of the $S$-kernel of $\partial_\mathbf{x}$ is given by
$
\partial_\mathbf{x}\circ s = 
\partial_\mathbf{x}[\val_S(p_{s,0}s)]\ 
\partial_\mathbf{x}[\val_S(p_{s,1}s)]\ 
\partial_\mathbf{x}[\val_S(p_{s,2}s)]\cdots
$
where $p_{s,n}$ is the $n$th word in the language $Ls^{-1}$, $n\ge 0$.
Let us inspect the $n$th term of such an element of the kernel: it is precisely the set
\begin{equation}
\label{eq:kernelElement}
\partial_\mathbf{x}[\val_S(p_{s,n}s)]=\{(\mathbf{x}[i],\, \mathbf{x}[i+\val_S(p_{s,n}s)]) \mid i\ge 0\}
\end{equation}
of pairs of letters.
Let $t$, $r$ be length-$|s|$ suffixes of words in $\#^*L$ for which
$\mathcal{L}_+(s,t,r)^{-1}$ is non-empty. There exist words $w$, $x$, $y$ such
that $ws$, $xt$, $yr \in \#^*L$ and $\val_S(ws)+\val_S(xt)=\val_S(yr)$. We
let $\mathcal{P}(s)$ denote the set of such pairs $(t,r)\in (A_\#\times A)^{|s|}$.
Now partition \eqref{eq:kernelElement} depending on the suffixes of length $|s|$ of
$\rep_S(i)$ and $\rep_S(i+\val_S(p_{s,n}s))$: we may write
\[
\partial_\mathbf{x}[\val_S(p_{s,n}s)] = \bigcup_{(t,r)\in \mathcal{P}(s)}\left\{
(\mathbf{x}[\val_S(xt)],\, \mathbf{x}[\val_S(yr)])
\mathrel{\Big|} \smatrix{w \\ x \\ y} \in \mathcal{L}_+\smatrix{s \\ t \\ r}^{-1} \wedge w\in \#^*p_{s,n}
\right\}.
\]
Roughly speaking, we look at all pairs of positions such that the first one is represented by a word ending with $t$, the second position is a shift of the first one by $\val_S(p_{s,n}s)$ and is represented by a word ending with $r$.

For convenience, we set
$
L(s,t,r,n):=\mathcal{L}_+ \smatrix{s \\ t \\ r}^{-1} \cap \left( \#^*p_{s,n} \times A_\#^* \times A_\#^* \right)$ for all $n\ge 0$.
Note that if $\mathcal{L}_+(s,t,r)^{-1}= \mathcal{L}_+(s',t',r')^{-1}$ and $Ls^{-1}=Ls'^{-1}$ then, for all~$n$, $L(s,t,r,n)=L(s',t',r',n)$.  Indeed, the second condition means that $p_{s,n}=p_{s',n}$ for all $n$.

\medskip

\noindent
$\bullet$ Ordering $L(s,t,r,n)$.

\noindent
For each $w$, $x$ of the same length, there is at most one $y$ not starting with $\#$ such that $(w,x,y)$ belongs to $\mathcal{L}_+(s,t,r)^{-1}$. Similarly if $y$ does not start with $\#$, for each $w\in A_\#^{|y|}$ (resp., $x\in A_\#^{|y|}$) there is at most one $x$ (resp., $w$) such that $(w,x,y)$ belongs to $\mathcal{L}_+(s,t,r)^{-1}$.

Now let $(w,x,y)$ and $(w',x',y')$ in $L(s,t,r,n)$. We will always assume (this is not a restriction) that triplets do not start with $(\#,\#,\#)$ --- otherwise, different triplets may have the same numerical value. Note that $\val_S(x)<\val_S(x')$ if and only if $\val_S(y)<\val_S(y')$.
Indeed, $w,w'$ both belong to $\#^*p_{s,n}$, thus $\val_S(ws)=\val_S(p_{s,n} s)=\val_S(w's)$. We have
\[
\val_S(yr)-\val_S(xt)=\val_S(ws)=\val_S(w's)=\val_S(y'r)-\val_S(x't),
\]
so,
\[
\val_S(yr)-\val_S(y'r)=\val_S(xt)-\val_S(x't).
\]
Since $S$ is an ANS, if $\val_S(xt)>\val_S(x't)$, this means that, discarding the possible leading $\#$'s because they have no effect on the evaluation, $xt$ occurs after $x't$ in the genealogically ordered language. So $x$ has to be genealogically larger than $x'$.
Again discarding the possible leading $\#$'s, the above equality means that $x$ is genealogically less than $x'$ if and only if the same holds for $y$ and $y'$. We can thus order $L(s,t,r,n)$ by listing in increasing genealogical order the second component of the elements, and therefore the $j$th element of $L(s,t,r,n)$ is well-defined.

\medskip

\noindent
$\bullet$ Defining two subsequences by the maps $\lambda_{s,t,r,n}:\mathbb{N}\to\mathbb{N}$ and $\mu_{s,t,r,n}:\mathbb{N}\to\mathbb{N}$.

\noindent
Let $(w_j,x_j,y_j)$ be the $j$th element in $L(s,t,r,n)$ with $j\ge 0$. After removing the leading $\#$'s, the word 
$x_j$ belongs to $Lt^{-1}\cup\{\varepsilon\}$, which can also be genealogically ordered. We let $\lambda_{s,t,r,n}(j)$ denote the index (i.e., position counting from $0$) of $x_j$ within this language. Similarly, the word 
$y_j$ belongs to $Lr^{-1}\cup\{\varepsilon\}$ and has an index $\mu_{s,t,r,n}(j)$ within this language.

Note that if $\mathcal{L}_+(s,t,r)^{-1}= \mathcal{L}_+(s',t',r')^{-1}$,
$Ls^{-1}=Ls'^{-1}$, and $Lt^{-1}=Lt'^{-1}$ then, for all~$n$, the maps
$\lambda_{s,t,r,n}$ and $\lambda_{s',t',r',n}$ are the same. Indeed, the
first two conditions imply that $L(s,t,r,n)=L(s',t',r',n)$. Similarly, if
$\mathcal{L}_+(s,t,r)^{-1}= \mathcal{L}_+(s',t',r')^{-1}$,
$Ls^{-1}=Ls'^{-1}$, and $Lr^{-1}=Lr'^{-1}$ then, for all~$n$, the maps
$\mu_{s,t,r,n}$ and $\mu_{s',t',r',n}$ are the same.

We now obtain
\begin{align*}
  \partial_\mathbf{x}[\val_S(p_{s,n}s)]
  &=\!
     \bigcup_{(t,r)\in \mathcal{P}(s)} \!\! \left\{ 
( \mathbf{x}[\val_S(xt)],\, \mathbf{x}[\val_S(yr)] )
\mathrel{\Big|} \smatrix{w \\ x \\ y}\in \mathcal{L}_+ \smatrix{s \\ t \\ r}^{-1} \wedge w\in \#^*p_{s,n}
                                     \right\}\\
  &= \! \bigcup_{(t,r)\in \mathcal{P}(s)} \!\! \left\{
( \mathbf{x}[\val_S(x_jt)],\, \mathbf{x}[\val_S(y_jr)] )
\mathrel{\Big|} \smatrix{w_j \\ x_j \\ y_j} \in L(s,t,r,n), j\ge 0
      \right\}\\
  &= \! \bigcup_{(t,r)\in \mathcal{P}(s)} \!\! \left\{
( (\mathbf{x}\circ t)[\lambda_{s,t,r,n}(j)], (\mathbf{x}\circ r)[\mu_{s,t,r,n}(j)] )
\mid j\ge 0 \right\}.
\end{align*}

Let us define an equivalence relation $\sim$ on triplets by $(s,t,r)\sim(s',t',r')$ if and only if all the following hold: 
\begin{align}\label{eq:new-equiv-rel}
 \mathcal{L}_+\smatrix{ s \\ t \\ r}^{\!\!\!-1} 
= \mathcal{L}_+ \smatrix{s' \\ t' \\ r'}^{\!\!\!-1}\!\!\!,\quad
& Ls^{-1} = Ls'^{-1}, \quad
 Lt^{-1} = Lt'^{-1}, \quad
 Lr^{-1}=Lr'^{-1}, \\
&\mathbf{x} \circ t = \mathbf{x} \circ t', \quad \text{ and } \quad
\mathbf{x}\circ r = \mathbf{x}\circ r'. \nonumber
\end{align}

Since we have regular languages and the kernel of $\mathbf{x}$ is finite by assumption, this relation has a finite index (bounded by $\ell k^3 m^2$). Given $s$, the set $\{(s,t,r)\mid (t,r)\in\mathcal{P}(s)\}$ can be replaced by a set $\Lambda(s)$ of representatives of the equivalence classes for $\sim$. Since $\sim$ has a finite index, there are finitely many possible subsets of the form $\Lambda(s)$. So, we can write
\[
\partial_\mathbf{x}[\val_S(p_{s,n}s)] =
 \bigcup_{(b,c,a)\in \Lambda(s)}
 	\left\{
((\mathbf{x}\circ c)[\lambda_{b,c,a,n}(j)], (\mathbf{x}\circ a)[\mu_{b,c,a,n}(j)])
\mid j\ge 0 \right\}.
\]
                                     Now if $s$ and $s'$ are such that $Ls^{-1}=Ls'^{-1}$ and $\Lambda(s)=\Lambda(s')$, then $\partial_\mathbf{x}\circ s = \partial_\mathbf{x}\circ s'$. This proves that the kernel of $\partial_\mathbf{x}$ is finite (of size bounded by $k\cdot 2^{\ell k^3 m^2}$).
\end{proof} 

\subsection{A family of non-addable systems}\label{ss:non-add}

In this section,  we show that the addability assumption on the numeration system is not necessary for the boundary sequence of an automatic word to be itself automatic. With \cref{exa:astarbstar,exa:astarbstar2}, we consider $S$-automatic sequences based on a non-addable ANS $S$ but such that the corresponding boundary sequences are still $S$-automatic. The first lemma is merely an observation that we will frequently use.

\begin{lemma}\label{lem:diff}
  Let $\mathbf{w}\in\{0,1\}^\mathbb{N}$ be an aperiodic binary word having arbitrarily long blocks of $0$s. Its boundary sequence $\partial_{\mathbf{w}}$ is over the alphabet $\{a,b\}$ where $a:=\{(0,0),(0,1),(1,0)\}$ and $b:=\{0,1\} \times \{0,1\}$. We have $\partial_{\mathbf{w}}[k]=b$ if and only if there exists $m>n\ge 0$ such $\mathbf{w}[m]=\mathbf{w}[n]=1$ and $k=m-n$.
\end{lemma}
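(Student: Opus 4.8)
The plan is to unpack the definition of $\partial_{\mathbf{w},1}[k]$ and argue that, under the hypotheses on $\mathbf{w}$, only the two sets $a$ and $b$ can ever occur, and that $b$ occurs precisely when a pair of $1$s is at distance $k$. First I would record what a boundary pair of length $1$ looks like: $(\alpha,\beta)\in\partial_{\mathbf{w}}[k]$ iff there is a position $n\ge 0$ with $\mathbf{w}[n]=\alpha$ and $\mathbf{w}[n+k]=\beta$ (here $uyv$ has $|u|=|v|=1$ and $|y|=k-1$, so the two marked positions are exactly $k$ apart). So $\partial_{\mathbf{w}}[k]$ is the set of pairs $(\mathbf{w}[n],\mathbf{w}[n+k])$ over all $n$, a subset of $\{0,1\}\times\{0,1\}$.

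Next I would observe that $(0,0)\in\partial_{\mathbf{w}}[k]$ for every $k\ge 1$: since $\mathbf{w}$ has arbitrarily long blocks of $0$s, in particular a block of $0$s of length $k+1$, which gives two $0$s at distance $k$. I would also observe that $(0,1)$ and $(1,0)$ both lie in $\partial_{\mathbf{w}}[k]$ for every $k$: take a block of $0$s of length $\ge k$; it must be both preceded and followed by a $1$ somewhere within distance making a pair, because $\mathbf{w}$ is aperiodic hence not eventually all $0$s — more carefully, since $\mathbf{w}$ contains infinitely many $1$s (otherwise it would be eventually periodic), pick a $1$ at some position $p$; the interval $[p-k,p]$ or $[p,p+k]$ will, using a long enough block of $0$s near $p$, realize both $(0,1)$ and $(1,0)$. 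Actually the cleanest route: a long block of $0$s of length $\ge k+1$ that is not a prefix (such exist since there are infinitely many $1$s) has a $1$ immediately before it, placing a $(1,0)$ pair at distance $k$ inside; symmetrically, one that is not a suffix gives $(0,1)$. Thus $a\subseteq\partial_{\mathbf{w}}[k]$ always, and the only question is whether $(1,1)$ is present, which by definition happens iff there are $m>n$ with $\mathbf{w}[m]=\mathbf{w}[n]=1$ and $m-n=k$. Hence $\partial_{\mathbf{w}}[k]=b$ iff such $m,n$ exist, and $\partial_{\mathbf{w}}[k]=a$ otherwise, which is exactly the claim.

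The only mildly delicate point is justifying that $a$ is always entirely contained in $\partial_{\mathbf{w}}[k]$, i.e., that both mixed pairs $(0,1)$ and $(1,0)$ and the pair $(0,0)$ really do occur for every $k$; this rests on combining "arbitrarily long blocks of $0$s'' with "infinitely many $1$s'' (the latter following from aperiodicity, since a binary word with finitely many $1$s is eventually periodic). I expect this bookkeeping — choosing a $0$-block long enough and correctly positioned relative to a neighboring $1$ — to be the main (though still routine) obstacle; everything else is just expanding definitions. I would phrase it as: fix $k\ge1$, choose a maximal block of $0$s of length at least $k+1$; it is finite (aperiodicity again) so has a letter on at least one side; the two mixed pairs and $(0,0)$ are then read off directly from positions inside and at the border of this block.
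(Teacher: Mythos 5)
Your proof is correct and follows essentially the same route as the paper's: the hypotheses guarantee that $0^{k+1}$, $0^k1$, and $10^k$ are all factors of $\mathbf{w}$ (so $a\subseteq\partial_{\mathbf{w}}[k]$ for every $k$), and the only remaining question is whether $(1,1)$ occurs, which by definition happens exactly when two $1$s sit at distance $k$. The paper states the factor claim without elaboration, whereas you spell out the (routine) justification via aperiodicity forcing infinitely many $1$s; this is a fair expansion, not a different argument.
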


\begin{proof}
  By assumption, $\mathbf{w}$ contains factors of the form $0^{k+1}$, $0^k1$ and $10^k$ for all $k\ge 1$. So a boundary set can either be $a$ or $b$.
  In $\mathbf{w}$, a window of length~$k$ will start with $1$ and is followed by a $1$ only if there exists $n$ such that $\mathbf{w}[n]=1$ and $\mathbf{w}[n+k]=1$.
\end{proof}

Let $s$ be an integer. In the next three examples, we consider morphic words~$\mathbf{w}_s$ from the same family. They are the image under the same coding (up to erasing the first symbol) of a fixed point of $g_s \colon 0\mapsto 01, 1\mapsto 12^s, 2\mapsto 2$, for $s \geq 1$. We show that the corresponding boundary sequences $\partial_{\mathbf{w}_s}$ may exhibit quite different behaviors: for $s=1$, it is constant; for $s=2$, it is periodic of period~$4$, and for $s\ge 3$, it is aperiodic.

\begin{example}\label{exa:astarbstar}
Consider the morphisms $g_1\colon 0\mapsto 01, 1\mapsto 12, 2\mapsto 2$ and $f\colon 0\mapsto \varepsilon, 1\mapsto 1, 2\mapsto 0$, and the word
\[
\mathbf{w}_1
= f(g_1^\omega(0))
= 1 1 0 1 0 0 1 0 0 0 1 0 0 0 0 1 0 0 0 0 \cdots.
\]
It is the characteristic sequence of triangular numbers (\cite[A000217]{OEIS}).
A \emph{triangular number} is any integer of the form $T_n:=\binom{n+1}{2} = \frac{n(n+1)}{2}$ for $n\ge 0$.
The sequence $(T_n)_{n\ge 0}$ starts with $0,1,3,6,10,15,21,28,36,45,55$.
 \end{example}

The ANS $S = (\alpha^*\beta^*,\{\alpha,\beta\},\alpha<\beta)$ is known to be non-addable, \cite[Thm.~17]{LecomteRigo2001}. The reason is that multiplication by a constant generally does not preserve $S$-recognizability, hence addition cannot have this property.

 \begin{proposition}\label{pro:add1}
 Let $S = (\alpha^*\beta^*,\{\alpha,\beta\},\alpha<\beta)$.
The characteristic sequence $\mathbf{w}_1$ of the set of triangular numbers given in \cref{exa:astarbstar} is $S$-automatic.
The boundary sequence $\partial_{\mathbf{w}_1}$ is $S$-automatic.  In particular, it is constant.
\end{proposition}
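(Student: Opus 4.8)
The plan is to establish the two assertions separately. First, I would show that $\mathbf{w}_1$ is $S$-automatic, where $S = (\alpha^*\beta^*,\{\alpha,\beta\},\alpha<\beta)$. Recall that the $S$-representations of $n$, obtained by genealogically ordering $\alpha^*\beta^*$, have a transparent shape: the word of rank $n+1$ in $\alpha^*\beta^*$ is $\alpha^i\beta^j$ where the pairs $(i,j)$ are ordered by $i+j$ first and then by $i$. Concretely, $\val_S(\alpha^i\beta^j) = \binom{i+j+1}{2} + i = T_{i+j} + i$. Hence $n$ is a triangular number precisely when $i=0$, i.e.\ when $\rep_S(n) \in \beta^*$. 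So a DFAO reading $\rep_S(n)$ that outputs $1$ on the state reached by words in $\beta^*$ (including $\varepsilon$, matching $T_0 = 0$, and remembering that $\mathbf{w}_1$ is $f(g_1^\omega(0))$ with the first symbol handled as in the family convention) and $0$ on any state reached after seeing an $\alpha$ produces exactly $\mathbf{w}_1$. A three-state DFAO (an initial/accepting state for $\beta^*$, a rejecting state for words containing $\alpha$, and a sink) suffices; this gives $S$-automaticity of $\mathbf{w}_1$. Since we cannot invoke \cref{the:kernel} here (the system is not addable by \cite[Thm.~17]{LecomteRigo2001}), this direct construction is necessary.

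For the boundary sequence, the key point is \cref{lem:diff}: $\mathbf{w}_1$ is aperiodic and, being the characteristic sequence of $\{T_n : n \ge 0\}$, it has arbitrarily long blocks of $0$s (the gaps $T_{n+1}-T_n = n+1$ grow without bound). Therefore $\partial_{\mathbf{w}_1}$ takes values in $\{a,b\}$ and $\partial_{\mathbf{w}_1}[k] = b$ if and only if $k = T_m - T_n$ for some $m > n \ge 0$. The heart of the argument is the elementary arithmetic fact that every positive integer $k$ is a difference of two triangular numbers: indeed $T_k - T_{k-1} = k$, so $k = T_m - T_n$ with $m = k$, $n = k-1$. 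Consequently $\partial_{\mathbf{w}_1}[k] = b$ for all $k \ge 1$, i.e.\ $\partial_{\mathbf{w}_1} = b^{\omega}$ is constant. A constant sequence is trivially $S$-automatic (a one-state DFAO outputting $b$), which completes the proof.

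The only genuinely substantive step is identifying the correct shape of $\rep_S(n)$ for this ANS and the resulting formula $\val_S(\alpha^i\beta^j) = T_{i+j}+i$; once that is in hand, both the $S$-automaticity of $\mathbf{w}_1$ and the constancy of $\partial_{\mathbf{w}_1}$ follow from short combinatorial observations (the latter from the one-line identity $k = T_k - T_{k-1}$ together with \cref{lem:diff}). I do not expect any real obstacle; the care needed is merely in matching conventions (the empty representation for $0$, the erasure of the first symbol in the definition of the family $\mathbf{w}_s$, and the fact that finiteness issues for kernel elements $\mathbf{x}\circ s$ do not interfere here).
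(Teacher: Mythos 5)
Your overall strategy is sound and, for the boundary sequence, more direct than the paper's: where you observe that every positive integer $k$ equals $T_k - T_{k-1}$, conclude via \cref{lem:diff} that $\partial_{\mathbf{w}_1} = b^{\omega}$, and then note that a constant word is $S$-automatic for \emph{any} ANS, the paper instead runs the generic $S$-kernel machinery of \cref{the:kernel}: it computes $(\partial_{\mathbf{w}_1}\circ s)[n]$ for each admissible suffix $s$ of $\alpha^*\beta^*$ and shows that $(1,1)$ always enters each such set --- using exactly your identity $T_{j+1}-T_j = j+1$ --- so that the kernel reduces to the single element $b^{\omega}$. The two arguments share the same arithmetic core; the paper's detour through the kernel is there mainly to illustrate the method it reuses for $\mathbf{w}_2$ and for the $\ell$-boundary sequence, and your shortcut loses nothing for the statement at hand. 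Your verification of the hypotheses of \cref{lem:diff} (aperiodicity and arbitrarily long $0$-blocks, from the gaps $T_{n+1}-T_n = n+1$) is also correct.

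There is, however, one concrete error you should fix in the first half. In the genealogical order on $\alpha^*\beta^*$ with $\alpha<\beta$, the words of length $m$ are $\alpha^m < \alpha^{m-1}\beta < \cdots < \beta^m$, and there are $T_m$ shorter words, so
\[
\val_S(\alpha^i\beta^j) = T_{i+j} + j,
\]
not $T_{i+j}+i$ as you wrote (this is the formula the paper uses, citing \cite[Ex.~2.18]{RigoBook}). Consequently $n$ is triangular precisely when $j=0$, i.e.\ when $\rep_S(n)\in\alpha^*$, not $\beta^*$; the paper confirms this with $\{\rep_S(T_m)\mid m\ge 0\}=\alpha^*$. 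As written, your DFAO (outputting $1$ on $\beta^*$) would generate the characteristic sequence of $\{T_m+m\mid m\ge 0\}$ rather than of the triangular numbers. The fix is only to swap the roles of $\alpha$ and $\beta$ in your automaton; the structure of the argument, and everything in the boundary-sequence part, is unaffected.
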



\begin{proof} 
  Let $s$ be a suffix of a word in $\alpha^*\beta^*$. We make use of the same notation as in the proof of \cref{the:kernel}. Let $p_{s,n}$ be the $n$th word in $\alpha^*\beta^*s^{-1}$,  for $n\ge 0$.
  As in \cref{eq:kernelElement}, the $n$th term of an element of the $S$-kernel of $\partial_{\mathbf{w}_1}$ is given by
  \begin{equation}
    \label{eq:formk}
      (\partial_{\mathbf{w}_1}\circ s)[n]=\{\mathbf{w}_1[i]\mathbf{w}_1[i+\val_S(p_{s,n}s)]\mid i\ge 0\}.
  \end{equation}
  For the numeration language of interest, the admissible suffixes $s$ are of the form $\alpha^\ell \beta^k$ for some $\ell,k\ge 0$. If $\ell>0$, then $p_{s,n}=\alpha^n$ and
  \begin{equation}
    \label{eq:proof11}
    (\partial_{\mathbf{w}_1}\circ \alpha^\ell \beta^k)[n]=\left\{\mathbf{w}_1[i]\mathbf{w}_1[i+T_{n+\ell+k}+k]\mid i\ge 0\right\}
  \end{equation}
  because
  \[
\val_S (\alpha^i \beta^j) = \frac{1}{2} (i+j) (i+j+1) + j = T_{i+j} + j
\]
for all $i,j\ge 0$ (see~\cite[Ex. 2.18]{RigoBook}). By \cref{lem:diff}, $(\partial_{\mathbf{w}_1}\circ \alpha^\ell \beta^k)[n]$ always contains $(0,0)$, $(0,1)$, $(1,0)$.
For all $n\ge 0$, there exists $j$ such that $T_{n+\ell+k}+k=T_{j+1}-T_j=j+1$. Taking $i=T_j$ in \cref{eq:proof11} shows that $(1,1)$ also belongs to $(\partial_{\mathbf{w}_1}\circ \alpha^\ell \beta^k)[n]$ which is thus equal to the set $b$. So the sequence $\partial_{\mathbf{w}_1}\circ \alpha^\ell \beta^k$ is constant.

Now, consider a suffix $s$ of the form $\beta^k$, then $p_{s,n}=\alpha^i\beta^j$ for which $\val_S(p_{s,n})=n$. We have, for $i,j\ge 0$, 
$$(\partial_{\mathbf{w}_1}\circ \beta^k)[\val_S(\alpha^i\beta^j)]=\left\{\mathbf{w}_1[t]\mathbf{w}_1[t+\val_S(\alpha^i\beta^{j+k})]\mid t\ge 0\right\}$$
and again $(1,1)$ belongs to this set for a convenient choice of $t$. As a conclusion, the $S$-kernel contains a unique constant sequence~$b^\omega$ so the boundary sequence is $S$-automatic. 
In particular, we have shown that $\partial_{\mathbf{w}}$ is constant (for the choice of suffix $s=\varepsilon$).
\end{proof}




For the word $\mathbf{w}_1$, we can go further and prove the $S$-automaticity of its $\ell$-boundary sequence.

\begin{proposition}
Let $S = (\alpha^*\beta^*,\{\alpha,\beta\},\alpha<\beta)$ and let $\ell\ge 2$.
The $\ell$-boundary sequence $\partial_{\mathbf{w}_1,\ell}$ of the characteristic sequence $\mathbf{w}_1$ of the set of triangular numbers given in \cref{exa:astarbstar} is $S$-automatic.
\end{proposition}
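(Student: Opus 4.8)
The plan is to argue, much as in the proof of \cref{pro:add1}, that the $S$-kernel of $\partial_{\mathbf{w}_1,\ell}$ is finite, so that \cref{thm:Ukernel} applies; equivalently, to exhibit a DFAO reading $S$-representations that produces $\partial_{\mathbf{w}_1,\ell}$. The new phenomenon compared with the $\ell=1$ case is that a length-$\ell$ window can now contain two (or more) of the $1$'s of $\mathbf{w}_1$, so the boundary sets are no longer all equal. The first step is therefore a structural description of $\Fac_\ell(\mathbf{w}_1)$. Since consecutive triangular numbers satisfy $T_{m+1}-T_m=m+1$, the blocks of $0$'s separating successive $1$'s of $\mathbf{w}_1$ grow without bound; consequently every length-$\ell$ factor containing at least two $1$'s occurs only at positions $i\le T_{\ell-2}$. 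Let $E_\ell$ denote the (finite) set of pairs $(u,i)$ with $0\le i\le T_{\ell-2}$ and $u=\mathbf{w}_1[i,i+\ell-1]$ such a factor; call all other length-$\ell$ factors \emph{clean}, so that the clean factors are exactly $\mathcal{D}_\ell:=\{0^\ell\}\cup\{0^a10^{\ell-1-a}\mid 0\le a<\ell\}$.

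The second step pins down $\partial_{\mathbf{w}_1,\ell}[n]$ for all large $n$. Fix a suitable threshold $N_\ell$. If $n\ge N_\ell$ and $(u,v)\in\partial_{\mathbf{w}_1,\ell}[n]$, then $v$ occurs at a position $\ge n$, hence is clean; conversely every clean--clean pair occurs, since for any target distance $d\ge1$ the consecutive triangular numbers $T_{d-1},T_d$ are at distance $d$ and, once $d$ is large, are far enough apart for two length-$\ell$ windows to be placed around them (or inside a long $0$-block) so as to read off any prescribed clean factors. Writing $\mathcal{C}_\ell$ for the set of all clean--clean pairs, we thus get, for $n\ge N_\ell$,
\[
\partial_{\mathbf{w}_1,\ell}[n]=\mathcal{C}_\ell\ \cup\ \bigcup_{(u,i)\in E_\ell}\bigl\{\bigl(u,\ \mathbf{w}_1[i+n,i+n+\ell-1]\bigr)\bigr\},
\]
and $\mathbf{w}_1[i+n,i+n+\ell-1]$ equals $0^a10^{\ell-1-a}$ exactly when a triangular number lies at position $i+n+a$ (for the unique such $a$, the gaps being huge), and $0^\ell$ otherwise.

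The third step reduces this set to a bounded amount of data read directly from $\rep_S(n)$. Using $\val_S(\alpha^i\beta^j)=T_{i+j}+j$ and writing $\rep_S(n)=\alpha^{i_n}\beta^{j_n}$, one checks that a triangular number $T_p$ with $p$ large lies in $[i+n,i+n+\ell-1]$ for some $(u,i)\in E_\ell$ precisely when $T_p-n\le C_\ell:=T_{\ell-2}+\ell-1$, i.e.\ precisely when $n$ lies at most $C_\ell$ below a triangular number; since $\rep_S(T_p-c)=\alpha^{c-1}\beta^{p-c}$ for $1\le c\le p$ and $\rep_S(T_p)=\alpha^p$, this says exactly that $j_n=0$ or $i_n<C_\ell$, and in that case the relevant offset $T_p-n$ equals $i_n+1$ (respectively $0$ when $j_n=0$). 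Hence, for $n\ge N_\ell$, the set $\partial_{\mathbf{w}_1,\ell}[n]$ is completely determined by the pair $\bigl(\min(i_n,C_\ell),\ [\,j_n=0\,]\bigr)$: it equals a fixed set $\mathcal{C}_\ell'$ when $i_n\ge C_\ell$ and $j_n\ge1$, a fixed set $\mathcal{C}_\ell''$ when $j_n=0$, and one of finitely many further fixed sets (indexed by $i_n\in\{0,\dots,C_\ell-1\}$) otherwise. A DFAO over the alphabet of boundary sets that, on input $\rep_S(n)=\alpha^{i_n}\beta^{j_n}$, counts the $\alpha$'s up to $C_\ell$, records whether a $\beta$ has appeared, and stores in addition a bounded amount of initial state to output the right values on the finitely many indices $\ell\le n<N_\ell$, therefore generates $\partial_{\mathbf{w}_1,\ell}$; equivalently, the $S$-kernel of $\partial_{\mathbf{w}_1,\ell}$ is finite (one can also see this by writing out the kernel elements $\partial_{\mathbf{w}_1,\ell}\circ\alpha^i\beta^j$ and $\partial_{\mathbf{w}_1,\ell}\circ\beta^j$ as in the proof of \cref{pro:add1} and checking that only finitely many occur). \cref{thm:Ukernel} then yields that $\partial_{\mathbf{w}_1,\ell}$ is $S$-automatic.

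The main obstacle is the second and third steps: identifying exactly which pairs appear at each large $n$, and then showing the answer depends only on a bounded part of $\rep_S(n)$. Both rest solely on the arithmetic of triangular numbers --- that their consecutive differences exhaust the positive integers (so every clean--clean pair is realized) and that they eventually become sparser than any fixed window width (so length-$\ell$ factors are eventually clean, and the only surviving effect of the finitely many ``exceptional'' factors is governed by how closely $n$ sits below a triangular number, which $\rep_S$ displays directly via the number of leading $\alpha$'s). For $\ell=1$ this all collapses to the single boundary set $b$, whereas for $\ell\ge2$ several distinct sets genuinely occur; the delicate bookkeeping is to track the exceptional pairs correctly --- for instance that $110^{\ell-2}$ occurs only at position $0$, so that at the indices $n$ with $\rep_S(n)\in\alpha^*$ the pair $(110^{\ell-2},0^\ell)$ is replaced by $(110^{\ell-2},10^{\ell-1})$.
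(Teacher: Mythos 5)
Your proposal is correct and follows essentially the same route as the paper: both isolate the finitely many length-$\ell$ factors with at least two $1$'s (occurring only at positions $\le T_{\ell-2}$), show the clean factors contribute a fixed set of pairs for all large $n$, and reduce the remaining dependence to how far $n$ sits below the next triangular number, which is visible in $\rep_S(n)$. Your threshold $C_\ell=T_{\ell-2}+\ell-1=T_{\ell-1}$ and the case split on $(\min(i_n,C_\ell),[\,j_n=0\,])$ are exactly the paper's decomposition of $\N$ into the intervals $I_1$ and $I_2$ of length $T_{\ell-1}+1$, just phrased as a DFAO rather than as the factorization $p\prod_{n\ge1}c^nw$.
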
 
 \begin{proof}
Since $\mathbf{w}_1$ is the characteristic sequence of the triangular numbers,  its prefix of length $T_n+1$ ($n\ge 0$) ends with $1$ and contains $n+1$ occurrences of $1$; more precisely we have
 \begin{align}\label{eq: w prod consecutive 0's}
 \mathbf{w}_1 = \prod_{n\ge 0} (10^n).
\end{align}
Now consider the $\ell$-boundary sequence $\partial_{\mathbf{w}_1,\ell}$ of~$\mathbf{w}_1$.
Let $n\ge \ell$ and consider a length-$\ell$ factor $u$ of~$\mathbf{w}_1$.
Assume first that $u$ contains at most one letter $1$.
Then $u$ can take two forms.
\begin{itemize}
\item If $u=0^\ell$,  then the pairs $(u, 0^\ell)$,  $(u, 0^i 1 0^{\ell-i-1})$ for $i\in\{0,1,\ldots,\ell-1\}$ all belong to $\partial_{\mathbf{w}_1,\ell}[n]$.
\item If $u=0^i 1 0^{\ell-i-1}$ for some $i\in\{0,1,\ldots,\ell-1\}$,
then \cref{eq: w prod consecutive 0's} implies that the pairs $(u,0^\ell)$,
$(u, 0^j 1 0^{\ell-j-1})$ for $j\in\{0,1,\ldots,\ell-1\}$ all belong to
$\partial_{\mathbf{w}_1,\ell}[n]$ if $n\ge 2\ell$. Indeed, let $0\leq i,j < \ell$.
With $0^i 1 0^{\ell-i-1}\, v\, 0^j 1 0^{\ell-j-1}$ a factor of length $n + \ell$, we must have
$|v| \geq 2(i+1) - \ell - j$ and such a factor $v$ clearly
exists when this condition is satisfied. Taking $i=\ell-1$ and $j=0$ gives the maximum
length requirement $|v|\geq \ell$, so the claim follows for $n \geq 2\ell$.
\end{itemize}
Therefore, the length-$\ell$ factors containing at most one letter $1$ have the same contribution towards every boundary set in $\partial_{\mathbf{w}_1,\ell}[n]$ for $n\geq 2\ell$.
In other words,  since $\mathbf{w}_1$ has length-$\ell$ factors containing at least two letters $1$,  two boundary sets $\partial_{\mathbf{w}_1,\ell}[n]$ and $\partial_{\mathbf{w}_1,\ell}[p]$ may differ on the length-$\ell$ factors containing at least two letters $1$.

We now examine the contribution to $\partial_{\mathbf{w}_1,\ell}[n]$ of a length-$\ell$ factor $u$ containing at least two occurrences of $1$.
Due to \cref{eq: w prod consecutive 0's} again,  $u$ only appears once in $\mathbf{w}_1$,  so there is a unique factor $v$ of $\mathbf{w}_1$ such that the pair $(u,v)$ belongs to $\partial_{\mathbf{w}_1,\ell}[n]$.
From \cref{eq: w prod consecutive 0's},  one sees that long stretches of letters $0$ appear in $\mathbf{w}_1$.
Notice that $T_{\ell-2}$ is the last position (starting at $0$) of a length-$\ell$ factor containing at least two occurrences of $1$ in $\mathbf{w}_1$ (this is the factor $10^{\ell-2}1$). Then $T_{\ell-2} + \ell = T_{\ell-1}+1$.
We compute $\partial_{\mathbf{w}_1,\ell}[n]$ for $n$ ranging into two intervals: either $n$ belongs to $I_1:=[T_m +1 , T_{m+1} - T_{\ell-1}-1]$ or $I_2:=[T_{m+1} - T_{\ell-1} , T_{m+1}]$ for some $m > T_{\ell-1}$. (For $I_1$ to be non-empty,
we must have $T_{m+1} - T_{\ell-1} - 1 - T_m -1 = m -T_{\ell-1} -1 \geq 0$.)

\noindent \textbf{First interval.}
Let $u$ be a factor of $\infw{w}_1$ such that $|u|_1 \geq 2$, and let $uyv$
be the unique factor of length $n+\ell$, with $|y| = n$, starting with $u$.
We claim that $v = 0^{\ell}$.
Notice that the first letter of this particular occurrence of $v$ appears at
a position in the interval
\[
[T_m+1, T_{\ell-2} + T_{m+1}-T_{\ell-1}-1] = [T_m+1,T_{m+1}-\ell].
\]
Hence $v = 0^{\ell}$.
(See \cref{ex: triangular-2-3} for an illustration.)

\noindent \textbf{Second interval.}
Let $n=T_{m+1} - i$ and $n'=T_{m+2} - i$ for some $m$ and $i\in\{0,1,\ldots,T_{\ell-1}-1\}$.
We claim that $\partial_{\mathbf{w}_1,\ell}[n]=\partial_{\mathbf{w}_1,\ell}[n']$.
Suppose that $|u|_1\ge 2$.
Now let $y,y',v,v'\in \{0,1\}^*$ be words such that $uyv,uy'v'$ are factors of $\mathbf{w}_1$ with $|y|=n-\ell$, $|y'|=n'-\ell$ and $|v|=\ell=|v'|$.
Since $|uy|=n=T_{m+1} - i$ and $|uy'|=n'=T_{m+2} - i$,  we have $v=v'$ (note that the sequence of first difference $(T_{n+1}-T_{n})_{n\ge 0}$ goes through all positive integers). 
Therefore,  since the length of $I_2$ is constant (and equal to $T_{\ell-1}+1$),  there exists a word $w$ of length $T_{\ell-1}+1$ such that $\prod_{n \in I_2} \partial_{\mathbf{w}_1,\ell}[n] = w$.  
%
%
%

All in all,  we have shown that $\partial_{\mathbf{w}_1,\ell}$ is of the form $p \prod_{n\geq 1} c^n w$ for some word $p$,  a letter $c$,  and a word $w$ of length $T_{\ell-1}+1$.
It follows that $\partial_{\mathbf{w}_1,\ell}$ is $S$-automatic.
Indeed,  we have $\{\rep_S(T_{m}) \mid m\ge 0\}=\alpha^*$ and $\{\rep_S(T_{m+1}-i) \mid m > T_{\ell-1}\}=\alpha^i\beta^{\ell-i}\beta^*$ for $i>0$.
Since there are only finitely many values of $i$ for which the corresponding boundary sets are distinct, $\partial_{\mathbf{w}_1,\ell}$ is $S$-automatic.
 \end{proof}
 
 In the following example,  we illustrate the proof of the previous result for several values of $\ell$.

\begin{example}\label{ex: triangular-2-3}
First, consider $\ell=2$. 
Let us define the boundary sets
 \begin{align*}
 a &:=  \{(00, 00), (00,01),(00,10),(01,00),(10,00),(10,01),(10,10),(11,01) \}, \\
b &:= \{(00, 00), (00,01),(00,10),(01,00),(10,00),(01,01),(10,01),(10,10),(11,10) \},\\
c &:= \{(00, 00), (00,01),(00,10),(01,00),(10,00),(01,01),(01,10),(10,01),(10,10),(11,00) \}, \\
d &:= \{(00, 00), (00,01),(00,10),(01,00),(10,00),(01,01),(01,10),(10,01),(10,10),(11,01) \}, \\
e &:= \{(00, 00), (00,01),(00,10),(01,00),(10,00),(01,01),(01,10),(10,01),(10,10),(11,10) \}.
 \end{align*}
In this case, the last occurrence of a factor $u$ of length $2$ with $|u|_1 = 2$ appears
at position $T_{\ell-2} = 0$ since $\mathbf{w}_1=11010010001\cdots$.
The boundary set corresponding to $I_1$ is $c$ and those corresponding to $I_2$ are $d$ and $e$.
Note that these sets are all distinct, and they differ precisely on the set of the form $(11,v)$.
For $m=2 = T_{1}+1$,  the two intervals become $I_1=[4,4]$ and $I_2=[5 ,6]$,  and for $m=3$,  we obtain $I_1=[7, 8]$ and $I_2=[9,10]$.
We have $\partial_{\mathbf{w}_1,2}= a b \prod_{n\ge 1} \left( c^n d e \right)$,  for which $p=a b$,  $c=c$, and $w=d e$.

Similarly,  for $\ell=3$,  we have the last occurrence of a length-$\ell$ factor $u$ with $|u|_1 \geq 2$
appears at position $T_{\ell-2} = 1$.
For $m=T_2 + 1 = 4$,  we obtain $I_1=[11,11]$ and $I_2=[12,15]$.
Looking at the first few letters of $\mathbf{w}_1$,  one can obtain the following pairs belonging to $\partial_{\mathbf{w}_1,3}[n]$ for $n\in[11,15]$:
\[
\begin{array}{c|c}
 (u,v) & n\in[11,15] \text{ such that }(u,v)\in \partial_{\mathbf{w}_1,3}[n] \\
\hline
(110,000) & 11,12 \\
(110,001) & 13 \\
(110,010) & 14 \\
(110,100) & 15 \\
\hline
(101,000) & 11, 15 \\
(101,001) & 12 \\
(101,010) & 13 \\
(101,100) & 14
\end{array}.
\]
So,  for instance $\partial_{\mathbf{w}_1,3}[11]$ and $\partial_{\mathbf{w}_1,3}[12]$ agree on $(110,000)$ but not on $(101,000)$.
Define $c=t \cup \{(101,000),(110,000)\}$ and 
\begin{align*}
&w_1 = t \cup \{(101,001),(110,000)\}, \;\;
w_2 = t \cup \{(101,010),(110,001)\}, \\
&w_3 = t \cup  \{(101,100),(110,010)\}, \;\;
w_4 = t \cup  \{(101,000),(110,100)\},
\end{align*}
where
\begin{align*}
t 
= 
\{
&(000,000),(000,001),(000,010),(000,100),(001,000),(001,001),(001,010),(001,100),\\
&(010,000),(010,001),(010,010),(010,100), (100,000),(100,001),(100,010),(100,100).
\}
\end{align*}
From the previous table,  we have $\partial_{\mathbf{w}_1,3}[11;15]=c w_1 w_2 w_3 w_4$.

Finally,  we consider the case $\ell=5$ for which $T_{3}=6$.
For $m=T_{4}+1 = 11$,  we obtain $I_1=[67,67]$ and $I_2=[68,78]$.
In the following table are displayed the pairs $(u_j,v)$ belonging to $\partial_{\mathbf{w}_1,5}[n]$:
\[
\begin{array}{c|c|c|c|c|c}
& u_0 = 11010 & u_1 = 10100 & u_2 = 01001 & u_3 = 10010 & u_4 = 10001 \\
n \in I_1 \cup I_2 & p_0 = 0 & p_1 = 1 & p_2 = 2 & p_3 = 3 & p_4 = 6 \\
\hline
67 & 0^5 & 0^5 & 0^5 & 0^5 & 0^5\\
68 & 0^5 & 0^5 & 0^5 & 0^5 & 0^41\\
69 & 0^5 & 0^5 & 0^5 & 0^5 & 0^310\\
70 & 0^5 & 0^5 & 0^5 & 0^5 & 0^210^2\\
71 & 0^5 & 0^5 & 0^5 & 0^41 & 010^3\\
72 & 0^5 & 0^5 & 0^41 & 0^310 & 10^4\\
73 & 0^5 & 0^41 & 0^310 & 0^210^2 & 0^5\\
74 & 0^41 & 0^310 & 0^210^2 & 010^3 & 0^5\\
75 & 0^310 & 0^210^2 & 010^3 & 10^4 & 0^5\\
76 & 0^210^2 & 010^3 & 10^4 & 0^5 & 0^5\\
77 & 010^3 & 10^4 & 0^5 & 0^5 & 0^5\\
78 & 10^4 & 0^5 & 0^5 & 0^5 & 0^5\\
\end{array}.
\]
For instance,  we see that the sets $\partial_{\mathbf{w}_1,5}[n]$ for $n\in[74,78]$ differ on the pair $(u_0,v)$.
However $\partial_{\mathbf{w}_1,5}[n]$ for $n\in[67,73]$ all contain $(u_0, 0^5)$ so $u_0$ cannot tell them apart. 
To that aim,  one has to go through all columns in the previous table, therefore covering all possible values of $u_i$.
\end{example}

 \begin{example}\label{exa:astarbstar2}
   Let $S=(\alpha^*\beta^*\cup \beta^*\gamma^*,\{\alpha,\beta,\gamma\},\alpha<\beta<\gamma)$ be an abstract numeration system whose language has exactly $2n+1$ words of length~$n$. For a construction of regular languages with a specific polynomial growth, see \cite{Rigo2002}. Consider the $S$-automatic word given by the characteristic sequences of the words from the sublanguage $\alpha^*$ within $\alpha^*\beta^*\cup \beta^*\gamma^*$: $\mathbf{w}_2=1100100001000000\cdots$. This is exactly the characteristic sequence of the set of squares. This word is also obtained using the morphisms $g_2\colon 0\mapsto 01, 1\mapsto 122, 2\mapsto 2$ and $f\colon 0\mapsto \varepsilon, 1\mapsto 1, 2\mapsto 0$, $\mathbf{w}_2=f(g_2^\omega(0))$. Notice that again the ANS $S$ is non-addable, this follows from \cite[Thm.~15]{Rigo2001}.
\end{example}

\begin{proposition}\label{pro:add2}
  Let $S=(\alpha^*\beta^*\cup \beta^*\gamma^*,\{\alpha,\beta,\gamma\},\alpha<\beta<\gamma)$. 
The boundary sequence $\partial_{\mathbf{w}_2}$ of the characteristic sequence of the set of squares given in \cref{exa:astarbstar2} is $S$-automatic. In particular, it is periodic with period $babb$.
 \end{proposition}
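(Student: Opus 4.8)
The strategy is to describe $\partial_{\mathbf{w}_2}$ explicitly as an eventually periodic-with-a-twist sequence (as was done for $\mathbf{w}_1$), and then certify $S$-automaticity via the $S$-kernel. Since $\mathbf{w}_2$ is the characteristic sequence of the squares, it has the factorization $\mathbf{w}_2 = \prod_{n\ge 0}(10^{2n})$, and in particular it is aperiodic with arbitrarily long blocks of $0$s, so \cref{lem:diff} applies: $\partial_{\mathbf{w}_2}$ is over $\{a,b\}$ and $\partial_{\mathbf{w}_2}[k]=b$ if and only if $k$ is a difference of two squares. First I would recall the elementary fact that a positive integer $k$ is a difference of two squares if and only if $k\not\equiv 2 \pmod 4$. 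Hence $\partial_{\mathbf{w}_2}[k]=a$ exactly when $k\equiv 2\pmod 4$, which already shows $\partial_{\mathbf{w}_2}=(babb)^{\omega}$ read from index $1$, i.e. it is purely periodic with period $babb$ (one must be slightly careful with the indexing convention of the boundary sequence starting at $1$, but the period-$4$ pattern $b\,a\,b\,b$ at positions $k\equiv 1,2,3,0 \pmod 4$ is what comes out).

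Once the periodicity is established, it remains to argue that this periodic word is $S$-automatic for the given non-addable ANS $S=(\alpha^*\beta^*\cup\beta^*\gamma^*,\{\alpha,\beta,\gamma\},\alpha<\beta<\gamma)$. For this I would invoke \cref{thm:Ukernel} and compute (or bound) the $S$-kernel of $\partial_{\mathbf{w}_2}$ directly, exactly in the spirit of the proof of \cref{pro:add1}: for each admissible suffix $s$ of a word in the numeration language (these are of the form $\alpha^i\beta^j$, $\beta^j\gamma^k$, or $\gamma^k$), the associated subsequence $\partial_{\mathbf{w}_2}\circ s$ picks out the values $\partial_{\mathbf{w}_2}[\val_S(p_{s,n}s)]$ as $n$ ranges over $\mathbb{N}$; since $\partial_{\mathbf{w}_2}$ depends only on the residue class mod $4$ of its argument, $\partial_{\mathbf{w}_2}\circ s$ is determined by the residues mod $4$ of the numbers $\val_S(p_{s,n}s)$, and these residues form an eventually periodic sequence in $n$ because $\val_S$ on the relevant sublanguages is a quadratic quasi-polynomial in $n$ (recall $\val_S(\alpha^i\beta^j)=T_{i+j}+j$ and analogous closed forms), whose reduction mod $4$ is eventually periodic. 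Therefore each kernel element is eventually periodic, and there are only finitely many of them (the number of residue-reduction patterns arising is finite), so the $S$-kernel is finite and \cref{thm:morphicIsAutomatic}/\cref{thm:Ukernel} gives that $\partial_{\mathbf{w}_2}$ is $S$-automatic.

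Alternatively, and perhaps more cleanly, I would just exhibit a DFAO: the language $\{\rep_S(k)\mid k\equiv 2\pmod 4\}$ is a regular subset of $\alpha^*\beta^*\cup\beta^*\gamma^*$ (one checks this by reading off the closed forms for $\val_S$ on each branch and noting that the condition ``$\val_S(w)\equiv 2\pmod 4$'' cuts out a regular sublanguage, since a quadratic quasi-polynomial mod $4$ is eventually periodic and hence recognizable after a bounded prefix), and likewise for the other three residues; assembling these four automata by the standard product/combine construction yields a DFAO outputting $\partial_{\mathbf{w}_2}$. The \emph{main obstacle} is the bookkeeping needed to verify regularity of $\{\rep_S(k): k\equiv r \pmod 4\}$ inside this unusual two-branch language: one must handle the two branches $\alpha^*\beta^*$ and $\beta^*\gamma^*$ separately, track how $\val_S$ behaves on each (the counting function of the language is $2n+1$ so the valuations are genuinely quadratic), and confirm that reducing mod $4$ leaves a regular set — routine but slightly delicate, and the cleanest route is really to go through \cref{lem:diff} plus the difference-of-squares characterization, which bypasses all explicit manipulation of $\val_S$.
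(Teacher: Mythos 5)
Your derivation of the periodicity is exactly the paper's own (second, short) proof: \cref{lem:diff} reduces everything to deciding which $k$ are differences of two squares, and $k=m^2-n^2$ with $m>n\ge 0$ is solvable precisely when $k\not\equiv 2\pmod 4$, giving $\partial_{\mathbf{w}_2}=(babb)^\omega$ from index $1$. That part is correct and complete. The paper then finishes in one line by citing the general fact that every ultimately periodic set of integers is $S$-recognizable for \emph{every} ANS (Thm.~4 of Lecomte--Rigo), which you do not invoke; instead you try to re-prove the $S$-automaticity of this periodic word by hand, and that is where your argument breaks.

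The concrete error is the claim that the residues $\val_S(p_{s,n}s)\bmod 4$ form an eventually periodic sequence \emph{in $n$}, and hence that each kernel element $\partial_{\mathbf{w}_2}\circ s$ is eventually periodic. This is false. Take $s=\beta$: the $n$th word of $L\beta^{-1}=\alpha^*\beta^*$ is $\alpha^{m-\ell}\beta^{\ell}$ with $n=T_m+\ell$ and $0\le\ell\le m$, so $\val_S(p_{s,n}\beta)=(m+1)^2+\ell+1$ (note also that for this two-branch $S$ the closed form is $\val_S(\alpha^i\beta^j)=(i+j)^2+j$, not $T_{i+j}+j$ as you quote from \cref{pro:add1}). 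Reducing mod $4$ produces, on the $n$-interval $[T_m,T_{m+1})$, the pattern ``$a$ at $\ell\equiv 0\pmod 4$'' for $m$ even and ``$a$ at $\ell\equiv 1\pmod 4$'' for $m$ odd; since these blocks have growing lengths $m+1$ and the gap between the last $a$ of one block and the first $a$ of the next cycles through values different from $4$, the resulting word (the row for $\beta$ in \cref{tab:kpref}, namely $abaabbbabbabbba\cdots$) is \emph{not} eventually periodic. The point is that $\val_S$ is a quadratic function of the exponents $(i,j)$, not a quasi-polynomial in the genealogical index $n$. Moreover, even if each kernel element were eventually periodic, that alone would not bound the \emph{number of distinct} kernel elements, which is what \cref{thm:Ukernel} actually requires; the paper's first proof establishes finiteness via the term-by-term congruence $\val_S(\alpha^k\beta^{\ell+j})\equiv\val_S(\alpha^k\beta^{\ell+j+4r})\pmod 4$, yielding relations such as $\partial_{\mathbf{w}_2}\circ\beta^{j}=\partial_{\mathbf{w}_2}\circ\beta^{j+4r}$. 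Your ``alternative'' route is the one that can be repaired: the condition $\val_S(\alpha^i\beta^j)=(i+j)^2+j\equiv r\pmod 4$ depends only on $(i\bmod 2,\,j\bmod 4)$, and similarly on the branch $\beta^*\gamma^*$, so $\{\rep_S(k)\mid k\equiv r\pmod 4\}$ is indeed regular --- but the justification must be phrased as periodicity in the \emph{exponents}, not in $n$.
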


 We provide two proofs, the first one is generic. It aims to show the finiteness of the $S$-kernel of $\partial_{\mathbf{w}_2}$ without explicitly determining the boundary sequence. The second one is less systematic but directly shows periodicity.

 \begin{proof}[Proof sketch]
   To prove that the $S$-kernel is finite, we first guess that it contains $14$ elements. We have computed prefixes of elements of the $S$-kernel with different suffixes given in \cref{tab:kpref}.
\begin{table}[h!tb]
   $$
\begin{array}{c|cccccccccccccccccccc}
\varepsilon &b & a & b & b & b & a & b & b & b & a & b & b & b & a & b & b & b & a & b & b \\
\alpha &b & b & b & b & b & b & b & b & b & b & b & b & b & b & b & b & b & b & b & b \\
\beta &a & b & a & a & b & b & b & a & b & b & a & b & b & b & a & b & a & b & b & b \\
\beta^2 &a & b & b & a & b & b & b & b & b & a & a & b & b & b & a & b & b & b & a & b \\
\beta^3 &b & b & b & b & b & a & b & b & b & a & b & b & a & b & b & b & b & b & a & b \\
\beta^4 &b & b & a & b & b & a & b & a & b & b & b & b & a & b & b & b & a & b & b & b \\
\gamma &b & b & b & b & a & b & b & a & b & b & b & b & b & a & b & b & b & b & a & b \\
\gamma^2 &b & a & b & a & b & b & b & b & a & b & b & b & a & b & b & a & b & b & b & a \\
ab &b & a & b & a & b & a & b & a & b & a & b & a & b & a & b & a & b & a & b & a \\
\alpha^2b &a & b & a & b & a & b & a & b & a & b & a & b & a & b & a & b & a & b & a & b \\
b\gamma^2 &a & a & b & b & a & a & b & b & a & a & b & b & a & a & b & b & a & a & b & b \\
\beta^2\gamma^2 &a & b & b & a & a & b & b & a & a & b & b & a & a & b & b & a & a & b & b & a \\
\beta^3\gamma^2 &b & b & a & a & b & b & a & a & b & b & a & a & b & b & a & a & b & b & a & a \\
\beta^4\gamma^2 &b & a & a & b & b & a & a & b & b & a & a & b & b & a & a & b & b & a & a & b \\
\end{array}$$
   \caption{Prefixes of elements of the form $\partial_{\mathbf{w}_2} \circ s$.}
   \label{tab:kpref}
 \end{table}   
 For the system $S$, since values are given by positions within the genealogically ordered language, we easily get
 $$\val_S(\alpha^i\beta^j)=(i+j)^2+j \quad\text{ and }\quad \val_S(\beta^j\gamma^k)=(j+k)^2+j+2k.$$
 The suffixes to consider are of the form
 $$\beta^k,\ \alpha^\ell \beta^k,\ \gamma^k,\ \beta^\ell \gamma^k \text{ with }k,\ell\ge 0.$$
 By \cref{lem:diff}, every boundary set contains at least $(0,0)$, $(0,1)$, $(1,0)$. The only question is therefore to determine whether $(1,1)$ belongs to some specific boundary set.  In view of \cref{tab:kpref}, we have to prove relations such as the following one (that we treat in details)
$$\partial_{\mathbf{w}_2}\circ \beta^{j}=\partial_{\mathbf{w}_2}\circ \beta^{j+4r},\quad j\in\{1,2,3,4\}, r>0.$$
Let $n\ge 0$. We still have \cref{eq:formk}
$$(\partial_{\mathbf{w}_2}\circ \beta^{j})[n]=\{\mathbf{w}_2[i]\mathbf{w}_2[i+\val_S(\alpha^k\beta^{\ell+j})]\mid i\ge 0\}$$
where $\alpha^k\beta^\ell$ is the $n$th word in $L(\beta^j)^{-1}$ with $L$ being the associated ANS language. Since $L(\beta^j)^{-1}=L(\beta^m)^{-1}=\alpha^*\beta^*$ for any $j,m>0$, we also have 
  $$(\partial_{\mathbf{w}_2}\circ \beta^{j+4r})[n]=\{\mathbf{w}_2[i]\mathbf{w}_2[i+\val_S(\alpha^k\beta^{\ell+j+4r})]\mid i\ge 0\}.$$
  By definition of the word $\mathbf{w}_2$ and \cref{lem:diff}, $(1,1)$ belongs to the above set if and only if $\val_S(\alpha^k\beta^{\ell+j+4r})$ is the difference of two squares. Modulo~$4$, a square is congruent to either~$0$ or,~$1$. Such a difference is not congruent to $2$ modulo $4$. It is straightforward to express any number belonging to the other three congruence classes as the difference of two squares: $4m+1=(2m+1)^2-(2m)^2$, $4m-1=(2m)^2-(2m-1)^2$ and $4m=(m+1)^2-(m-1)^2$. Observe that
  $$\val_S(\alpha^k\beta^{\ell+j})=(k+\ell+j)^2+\ell+j$$
and  $$\val_S(\alpha^k\beta^{\ell+j+4r})=(k+\ell+j+4r)^2+\ell+j+4r$$
  are congruent $\pmod{4}$. So $(1,1)$ belongs to the set $(\partial_{\mathbf{w}_2}\circ \beta^{j})[n]$ if and only if it belongs to $(\partial_{\mathbf{w}_2}\circ \beta^{j+4r})[n]$, leading to the conclusion. 

  The first few words in $L\beta^{-1}$ are $\varepsilon,\alpha,\beta$. To distinguish, as an example, the elements $\partial_{\mathbf{w}_2}\circ \beta$ and $\partial_{\mathbf{w}_2}\circ \beta^2$ of the $S$-kernel, it is enough to look at 
  $(\partial_{\mathbf{w}_2}\circ \beta)[2]=\{\mathbf{w}_2[i]\mathbf{w}_2[i+\val_S(\beta^2)]\mid i\ge 0\}$ and
 $(\partial_{\mathbf{w}_2}\circ \beta^2)[2]=\{\mathbf{w}_2[i]\mathbf{w}_2[i+\val_S(\beta^3)]\mid i\ge 0\}$ because $\val_S(\beta^2)\equiv 2\pmod{4}$ and $\val_S(\beta^3)\equiv 0\pmod{4}$. So the first one is $a$ and the second one is $b$ (as shown in \cref{tab:kpref}). 
 
 
 Proving the finiteness of the $S$-kernel amounts to prove relations such as:
 $$\partial_{\mathbf{w}_2}\circ \alpha\beta=\partial_{\mathbf{w}_2}\circ \alpha^{2m+1}\beta^{\{1,2\}+4n},\ \partial_{\mathbf{w}_2}\circ \alpha^2\beta=\partial_{\mathbf{w}_2}\circ \alpha^{2m}\beta^{\{1,2\}+4n},\ldots.$$
 \end{proof}
 Here is a shorter proof because, in our particular example, the boundary sequence is periodic.
 \begin{proof} Let us show that $\partial_{\mathbf{w}_2}=(babb)^\omega$. 
   We make use of \cref{lem:diff}: $\partial_{\mathbf{w}_2}[k]=b$ if and only if $k$ can be written as the difference of two squares $m^2-n^2$ with $m>n\ge 0$. With the same argument as in the previous proof, this holds if and only if $k$ is not congruent to $2$ modulo~$4$. Automaticity follows from the fact that any ultimately periodic set is $S$-recognizable for all ANS \cite[Thm.~4]{LecomteRigo2001}.
 \end{proof}

 \begin{remark}
   With \cref{exa:astarbstar,exa:astarbstar2}, we have exhibited sequences that are $S$-automatic for some non-addable numeration system $S$. One can naturally wonder if these sequences could also be $T$-automatic for another numeration system~$T$ being addable. Since the considered numeration systems have a polynomial growth, a Cobham-like result implies that if $\mathbf{w}_1$ (resp.,~$\mathbf{w}_2$) is $T$-automatic for some $T$, then $T$ must have a polynomial growth \cite[Cor.~27]{DurandRigo}. As a consequence of \cite[Thm.~15]{Rigo2001}, ANS with a polynomial growth are not addable. This means that \cref{exa:astarbstar,exa:astarbstar2} highlight words that are $S$-automatic only for some non-addable numeration systems $S$.

   As a side comment, if an addable numeration system is such that the graph of $n\mapsto T_n$ is also regular (i.e., the set of pairs $(\rep(n),\rep(T_n))$,  where the shortest representation is conveniently padded,  is a regular language), then the first order theory of $\langle \mathbb{N},+,x^2\rangle$ would be decidable. But this structure is equivalent to $\langle \mathbb{N},+,\cdot\rangle$ which is well known to have an undecidable theory.
 \end{remark}
 
To end up this short section, we consider a third example which is a small variation of the previous one.
\begin{example}
For $s \geq 3$, take the morphic word $\mathbf{w}_s=f(g_s^\omega(0))$ where
$g_s\colon 0\mapsto 01, 1\mapsto 12^s, 2\mapsto 2$ and
$f\colon 0\mapsto \varepsilon, 1\mapsto 1, 2\mapsto 0$. For a fixed $s$, the word $\infw{w}_s$
is the characteristic word of the set of numbers of the form $P_n:=\frac{n(s n - s +2)}{2}$.
For example, the word $\infw{w}_3$ is the characteristic sequence of the set of
\emph{pentagonal numbers} (\cite[\href{https://oeis.org/A000326}{A000326}]{OEIS}), $\infw{w}_4$
of the \emph{hexagonal numbers} (\cite[\href{https://oeis.org/A000384}{A000384}]{OEIS}), and
$\infw{w}_5$ of the \emph{heptagonal numbers} (\cite[\href{https://oeis.org/A000566}{A000566}]{OEIS}).
\end{example}

In the remainder of this part, we fix $s \geq 3$ and write $\infw{w} = \infw{w}_s$ for short.  Applying \cref{lem:diff}, the boundary
sequence is such that $\partial_{\mathbf{w}}[k]=b$ if and only if $k$ can be written as
\begin{equation}\label{eq:quad}
k=P_m-P_n=\frac{1}{2}(m-n) (s (m + n - 1) + 2)  
\end{equation}
for some integers $m > n\ge 0$. We say that an integer $k$ is \emph{representable} if there exist $m,n \in \N$ with $m>n$ such that the above equation holds.


\begin{proposition}
The boundary sequence $\partial_{\infw{w}}$ is aperiodic.
\end{proposition}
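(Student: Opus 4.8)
The plan is to show that the set of representable integers $R = \{k : \partial_{\infw{w}}[k] = b\}$ is not eventually periodic; by \cref{lem:diff} this immediately gives aperiodicity of $\partial_{\infw{w}}$. The starting point is the factorization in \eqref{eq:quad}: $k$ is representable iff $k = \frac{1}{2} d\,(s(d + 2n - 1) + 2)$ for some $d = m - n \geq 1$ and $n \geq 0$. So $k \in R$ iff $2k = d \cdot \big(s(2n-1) + 2 + sd\big)$ for suitable $d \geq 1$, $n \geq 0$; equivalently, $2k$ admits a divisor $d \geq 1$ such that the cofactor $2k/d$ can be written as $sd + 2 + s(2n-1)$ with $n \geq 0$, i.e. $2k/d \equiv sd + 2 - s \pmod{2s}$ and $2k/d \geq sd + 2 - s$. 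The second condition is automatic for $d$ small relative to $\sqrt{k}$, so representability of $k$ is controlled, up to a bounded-size correction, by a congruence condition on the divisors of $2k$ modulo $2s$.

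The key step is then a density/growth argument showing $R$ cannot be eventually periodic. I would argue by contradiction: if $R$ were eventually periodic with period $p$, then $R$ would have a well-defined rational density, and in particular the gaps between consecutive elements of $R$ would be bounded. But the consecutive differences $P_{n+1} - P_n = s n + 1$ grow linearly, and the pentagonal-type numbers $P_n$ themselves are all trivially representable (take $m = n$, $n = 0$), while one shows that long stretches just below $P_m$ contain no representable integers — concretely, an integer $k$ with $P_{m-1} < k < P_m$ is representable only if $2k$ has a divisor in a narrow range forcing a congruence mod $2s$, and a counting argument (e.g. bounding the number of $k$ in $[P_{m-1}, P_m]$ with a divisor $\leq C$ in a prescribed residue class) shows that the fraction of representable integers in $[P_{m-1}, P_m]$ tends to $0$ as $m \to \infty$. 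Since within a single period block the density is constant, this contradicts eventual periodicity. Alternatively, and perhaps more cleanly, one exhibits arbitrarily long runs of non-representable integers: for suitable $m$, every $k$ in an interval of length $\sim c\sqrt{m}$ just below $P_m$ fails the representability test, and arbitrarily long gaps in $R$ are incompatible with eventual periodicity since $R$ is infinite (it contains all $P_n$).

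The main obstacle I expect is making the "arbitrarily long runs of non-representable integers" claim precise, since representability is a divisor condition and divisors are delicate. The cleanest route is probably: fix a large prime $q \equiv 2 - s \pmod{2s}$ (or a suitable residue), consider $k$ near $P_q = \frac{q(sq - s + 2)}{2}$, and show that for $k$ in a short interval around a carefully chosen multiple of $q$, the only admissible divisor $d$ of $2k$ with $2k/d$ in the right congruence class is $d = 1$ or $d$ large, neither of which satisfies the size constraint $2k/d \geq sd + s + 2$ except at the finitely many points $P_n$ themselves. This isolates the $P_n$ as "sporadic" members of $R$ inside long non-representable stretches, and the linear growth of their gaps then forbids eventual periodicity. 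I would present this as a lemma ("$R$ contains arbitrarily long gaps") followed by the one-line deduction that an eventually periodic infinite set has bounded gaps.
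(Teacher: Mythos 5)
Your reduction of representability to a divisor-plus-congruence condition on $2k$ is correct, but the core of your argument --- that $R=\{k:\partial_{\infw{w}}[k]=b\}$ has vanishing density in the blocks $[P_{m-1},P_m]$, or even that it contains arbitrarily long gaps --- is false. Taking $d=m-n=1$ in \eqref{eq:quad} gives $P_{n+1}-P_n = sn+1$, so $R$ contains the \emph{entire} arithmetic progression $\{sn+1 : n\ge 0\}$. Hence $R$ has gaps bounded by $s$ and lower density at least $1/s$, and neither the ``long stretches just below $P_m$ contain no representable integers'' claim nor the ``fraction of representable integers in $[P_{m-1},P_m]$ tends to $0$'' claim can hold. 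The concluding deduction (``an eventually periodic infinite set has bounded gaps'') is true but vacuous here, since $R$ does have bounded gaps. The proposed repair via a prime $q$ and an interval around a multiple of $q$ runs into the same wall: every residue class $sn+1$ in that interval is already representable with $d=1$.

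The actual obstruction to periodicity is much more delicate and lives inside a sparse family of integers. The paper's proof isolates the numbers $s^i p^j$ (for $s$ odd; a similar family with an extra factor for $s$ even), where $p$ is a prime congruent to $1 \pmod{s}$, and proves a sharp threshold: $s^i p^j$ is representable if and only if $p^j \ge s\frac{s^i-1}{2}+1$. In particular $s^i$ itself (the case $j=0$) is \emph{not} representable for $i\ge 1$, while $s^i p$ \emph{is} representable once $p$ is large enough. Assuming $\partial_{\infw{w}} = uv^{\omega}$, one chooses $i$ with $s^i \ge |u|$ and then, by Dirichlet's theorem, a large prime $p \equiv 1 \pmod{s|v|}$; writing $p = q\,|v|s+1$ and $n = s^{i+1}q$ gives $s^i + n|v| = s^i p$, so the non-representable position $s^i$ and the representable position $s^i p$ lie in the same residue class modulo $|v|$ --- a contradiction. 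Your proposal contains no analogue of this threshold phenomenon, and without it the congruence bookkeeping on divisors of $2k$ does not get off the ground.
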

\begin{proof}
Assume first that $s$ is odd.
We make an observation about representable integers of a certain form.
\begin{claim}
Let $p$ be a prime number congruent to $1 \pmod{s}$.
For any $i,j \geq 0$, $s^i \cdot p^j$ is representable if and only if $p^j \geq s\frac{s^i-1}{2} + 1$.
\end{claim}
\begin{claimproof}
Notice that $p$ is an odd prime number.
Assume that $p^j \geq s\frac{s^i-1}{2} + 1$. Then there exists $n \geq 0$ such that $p^j = s\frac{s^i-1}{2} + 1 + n s$.
Setting $m = n + s^i$, we find
\begin{align*}
P_m - P_n 	&= \frac{1}{2}(m-n) (s (m + n - 1) + 2) \\
			&= \frac{1}{2}s^i(s(2n + s^i - 1)+2) \\
			&= s^i(s \frac{s^i -1}{2} + ns + 1) \\
			&=s^ip^j.
\end{align*}
Thus $s^i p^j$ is representable.

Assume then that $p^j < s\frac{s^i - 1}{2} + 1$, but towards a contradiction, that
$s^i p^j = P_m - P_n$ for some integers $m > n \geq 0$.
We thus have
\[
2 s^i p^j = (m-n)(s(m+n-1) + 2).
\]
Notice that $s(m+n-1)+2 \equiv 2 \pmod{s}$. Consequently, as $s \geq 3$, we must
have $s^i \mid m-n$. Furthermore, since $p \equiv 1 \pmod{s}$, we must have that
$2 \mid s(m+n-1) + 2 $ due to the same observation. Therefore, we have
$s(m+n-1) + 2 = 2p^{j_1}$ and $m = n + p^{j_2}s^i$ with $j_1 + j_2 = j$. Plugging the
latter into the former, we find
\[
2p^{j_1} = s(2n + s^ip^{j_2} - 1) + 2 = s(s^ip^{j_2}-1)+2 + s2n \geq s(s^i-1)+2 > 2p^j,
\]
where in the last inequality,  we have used the assumption.
This is a contradiction. Thus $s^i p^j$ is not representable, as claimed.
\end{claimproof}

Assume towards a contradiction that $\partial_{\infw{w}}$ is eventually periodic, i.e., 
$\partial_{\infw{w}} = uv^{\omega}$ for some finite words $u,v$. Let
$i \geq 1$ be such that $s^i \geq |u|$. Then the previous claim and \eqref{eq:quad}
imply $\partial_{\infw{w}}[s^i]=a$, and by assumption,
$\partial_{\infw{w}}[s^i + n|v|]=a$ for all $n\geq 0$.
Let however $p$ be a prime congruent
to $1 \pmod{|v| s}$ (and thus $p \equiv 1 \pmod{s}$) and
$p \geq s\frac{s^i-1}{2} + 1$. Note that there exist infinitely many primes of this form by
Dirichlet's theorem for primes in arithmetic progressions
(see, e.g., \cite[Thm.~7.9]{Apostol1976introduction}). Write $p = q\cdot |v|s + 1$.
Take $n = s^{i+1}q$; then we have
\[
s^i + n|v| = s^i + s^{i+1} q |v| = s^i(1 + q s|v|) = s^i p.
\]
This implies that $\partial_{\infw{w}}[s^i + n|v|] = b$ by the above claim together with
\eqref{eq:quad}. This contradiction shows that $\infw{w}$ is aperiodic when $s$ is odd.

Assume then that $s$ is even, say $s = 2t$ with $t \geq 2$.
Then we have that $k$ is representable if and only if
$k = P_n - P_m = (m-n)(t(m+n-1) + 1)$.
\begin{claim}
Let $p$ be a prime number congruent to $1 \pmod{s}$. Let $q = 1$ if $t$ is odd, otherwise let $q = t+1$.
Then, for all $i,j \geq 0$, we have that $t^i \cdot q \cdot p^j$ is representable if and only if
$p^j \cdot q \geq t(t^{i} -1) + 1$.
\end{claim}
\begin{claimproof}
If $p^j \cdot q \geq t(t^{i} -1) + 1$, then there exists $n \geq 0$ such that
$p^jq = t(t^i-1) + ns + 1$: indeed, if $t$ is odd, we have
$t(t^i-1) \equiv 0\pmod{s}$ and $p^jq = p^j \equiv 1 \pmod{s}$. If $t$ is even, then
$t(t^i-1) \equiv t \pmod{s}$ and we have $p^j q = p^j(t+1) \equiv t+1 \pmod{s}$. Now set $m = n + t^i$. We thus find
\[
P_m - P_n = (n-m)(t(m+n-1)+1) = t^i(t(2n+t^i - 1) + 1) = t^i(t(t^i-1) + s n + 1) = t^ip^jq,
\]
showing that $t^ip^jq$ is representable.

For the converse, assume again that $t^i p^jq = P_m - P_n$ but that $p^jq < t(t^{i} -1) + 1$. We thus have
\[
t^ip^jq = (m-n)(t(m+n-1) + 1).
\]
By inspection modulo $t$, we must have that $m-n = t^ip^{j_1}q_1$ and $t(m+n-1)+1 = p^{j_2}q_2$,
where $j_1 + j_2 = j$ and $q_1q_2 = q$.
We plug in $m = t^i p^{j_1}q_1 + n$ into the second term to obtain
\[
p^{j_2}q_2 = t(2n + t^ip^{j_1}q_1 -1) + 1 = 2n t + t (t^i p^{j_1}q_1 - 1) + 1 \geq t (t^i-1) + 1 > p^j q,
\]
where the last inequality is obtained by using the assumption.
This is a contradiction. Therefore $t^i p^j q$ is not representable, as was claimed.
\end{claimproof}

To conclude the proof of the proposition, assume again towards a contradiction that
$\partial_{\infw{w}} = uv^{\omega}$. Let $q = 1$ if $t$ is odd, and otherwise
let $q = t+1$. Let $i \geq 1$ be such
that $t(t^i-1) + 1 > q$ and $t^i \geq |u|$. Then by the above claim
$t^iq$ is not representable. In fact, by periodicity, we have that $t^iq + n|v|$ is
not representable for all $n\geq 0$. Let however $p$ be a prime with $p \equiv 1 \pmod{s|v|}$
(in particular $p \equiv 1 \pmod{s}$), and such that $pq \geq t(t^i-1) + 1$ (again Dirichlet's
theorem implies the existence of such a prime).
Write $p = r\cdot s|v| + 1$ and let $n = t^i q s r$.
We then have $t^iq + n|v| = t^iq + t^iq r s|v| = t^i q(1 +r s|v|) = t^i q p$,
which is a representable number by the above claim.
This contradiction shows that $\partial_{\infw{w}}$
is aperiodic.
\end{proof}

 \subsection{Non-addable systems: counterexamples}\label{ss:ce}

Our aim is to show that the boundary sequence of a $U$-automatic word is not always $U$-automatic. Here, we have special instances of abstract numeration systems which are, in particular, positional. So we refer to the sequence $U$ defining the system.  
We give two such examples.
The numeration system defined first is a variant of the base-$2$ system.
\begin{example}\label{exa:ce1}
  Take the numeration system $(U_n)_{n\ge 0}$ defined by $U_n=2^{n+1}-1$ for all $n\ge 0$. We have $0^*\rep_U(\mathbb{N})=(0+1)^*(\varepsilon+20^*)$. 
  Consider the characteristic word $\mathbf{u}$ of $U$, i.e., $\mathbf{u}[n]=1$ if and only if $n\in\{U_j\mid j\ge 0\}$.  The boundary sequence $\partial_\mathbf{u}$ starts with 
\[
a\, b\, a\, b\, a\, b\, a\, b\, a\, a\, a\, b\, a\, b\, a\, b\, a\, a\, a\, a\, a\, a\, a\, b\, a\, a\, a\, b\, a\, b\, a\, b\, a\, a\, a\, a\, a\, a\, a\, a\, a\, a\, a
\, a\, a\, a\, a \, b\, a\, a\, a
\cdots
\]
where $a:=\{(0,0),(0,1),(1,0)\}$ and $b:=\{0,1\} \times \{0,1\}$. \end{example}

One can show that the language $\{\rep_U(n) \colon \partial_{\infw{u}}[n]= b\}$
is not regular, hence:

\begin{proposition}
\label{prop:example_U}
  Let $U=(2^{n+1}-1)_{n\ge 0}$. The word $\mathbf{u}$ from \cref{exa:ce1} is $U$-automatic but its boundary sequence $\partial_\mathbf{u}$ is not $U$-automatic. 
\end{proposition}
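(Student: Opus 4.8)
The plan is to show that $\mathbf{u}$ is $U$-automatic by a routine argument, and then to prove that $\partial_{\mathbf{u}}$ is not $U$-automatic by exhibiting an explicit description of the set $B := \{n : \partial_{\mathbf{u}}[n] = b\}$ and arguing that $\{\rep_U(n) : n \in B\}$ is not a regular language; since $\partial_{\mathbf{u}}$ takes only the two values $a$ and $b$ by \cref{lem:diff} (the word $\mathbf{u}$ is aperiodic and has arbitrarily long blocks of $0$s), $U$-automaticity of $\partial_{\mathbf{u}}$ would force regularity of that language, giving the contradiction. For the first part, note that $n \in \{U_j : j \ge 0\}$ iff $\rep_U(n)$ has the form $10^*$; this is a regular condition on $U$-representations, so a DFAO reading $\rep_U(n)$ and outputting $1$ exactly on words of the form $10^*$ (and $0$ otherwise) witnesses $U$-automaticity of $\mathbf{u}$. (Here one uses the stated fact $0^*\rep_U(\mathbb{N}) = (0+1)^*(\varepsilon + 20^*)$ to see that $U$-representations are well-behaved; the greedy representation of $U_j$ is indeed $10^j$.)

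For the second part, the key step is to identify $B$ arithmetically. By \cref{lem:diff}, $\partial_{\mathbf{u}}[k] = b$ iff there exist $j > i \ge 0$ with $k = U_j - U_i = (2^{j+1}-1) - (2^{i+1}-1) = 2^{j+1} - 2^{i+1} = 2^{i+1}(2^{j-i} - 1)$. So $B = \{2^{i+1}(2^d - 1) : i \ge 0,\ d \ge 1\}$, i.e. $k \in B$ iff, writing $k = 2^e \cdot m$ with $m$ odd and $e \ge 1$, the odd part $m$ is of the form $2^d - 1$ (a Mersenne number, equivalently $m$ has all-ones binary expansion). The next step is to translate this into a condition on $\rep_U(n)$ for $n \in B$ and argue non-regularity. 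The cleanest route is a pumping-style argument: I would intersect $\{\rep_U(n) : n \in B\}$ with a suitable regular "test language" of representations on which membership in $B$ encodes a non-regular numerical relation — for instance representations of numbers of the form $2^e(2^e - 1)$ (which lie in $B$) versus nearby numbers $2^e(2^e-1) + 2^{e}\cdot(\text{small perturbation})$ that generically do not, forcing the automaton to "remember" the exponent $e$, which is impossible with finitely many states. Concretely, one computes $\rep_U(2^e(2^d-1))$ as an explicit word whose length and digit pattern grow with $\max(d,e)$ while the "is the odd part all ones" predicate depends on matching two unbounded parameters, and a standard Myhill–Nerode / pumping lemma argument on $U$-representations then yields non-regularity.

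The main obstacle is the last step: carrying out the non-regularity argument requires understanding $\rep_U$ well enough to pin down the representations of the numbers in $B$ (and of their near-misses) and to see that no finite automaton can check the "odd part is $2^d - 1$" condition. The subtlety is that regularity must be ruled out with respect to the genealogically ordered language of $U$-representations, not base-$2$ expansions, so one cannot simply quote that $\{$powers of two$\} - \{$powers of two$\}$ is not $2$-recognizable; instead one works directly with $U$-representations, using $0^*\rep_U(\mathbb{N}) = (0+1)^*(\varepsilon+20^*)$ to control the shape of representations, and then applies the pumping lemma to a carefully chosen infinite family inside $\{\rep_U(n) : n \in B\}$. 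Once $B$ is identified as $\{2^{i+1}(2^d-1)\}$, the combinatorial content of the obstruction is exactly that recognizing "all remaining bits are $1$ after a block of trailing zeros, with the number of ones unconstrained but the transition point marked" is fine, but the interaction with the skewed base $U_n = 2^{n+1}-1$ (whose representations involve the digit $2$ and the anomaly $20^*$) breaks regularity; making this precise is the crux of the proof.
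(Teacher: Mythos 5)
Your overall strategy is the same as the paper's: use \cref{lem:diff} to identify $B=\{n:\partial_{\mathbf{u}}[n]=b\}$ as the set of differences $U_m-U_n=2^{m+1}-2^{n+1}$, observe that $U$-automaticity of the two-letter sequence $\partial_{\mathbf{u}}$ would make $\rep_U(B)$ regular, and then refute regularity by pumping. The identification $B=\{2^{i+1}(2^d-1)\}$ is correct, and the first half ($\rep_U(U_j)=10^j$, so $\mathbf{u}$ is $U$-automatic) is fine. However, there is a genuine gap at exactly the point you flag as ``the crux'': you never actually compute $\rep_U(U_{m+r}-U_m)$, and without that the pumping argument cannot be run. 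This computation is the substantive content of the paper's proof: it shows that $\rep_U(B)=R_1\cup R_2\cup R_3$ with $R_1=20^*$ (the case $m=0$), $R_2=\bigcup_{k\ge 1}1^k0^*\rep_U(k)$ (the case $0<r<U_m$, where $\val_U(1^r0^{m-|\rep_U(r)|-1}\rep_U(r))=U_{m+r}-U_m$), and $R_3=\bigcup_{k\ge1}1^{U_k-1}20^{k-1}0^*$ (the case $r\ge U_m$). None of this is a priori obvious --- one must verify these are the \emph{greedy} representations in a system with digit set $\{0,1,2\}$ and the anomalous block $20^*$ --- and your proposal explicitly defers it.

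Your suggested test family (representations of $2^e(2^e-1)$ versus perturbations, forcing the automaton to ``remember $e$'') is plausible and in the right spirit, but as stated it is only a plan: you would still need the explicit form of these representations (here $1^e0^{e-|\rep_U(e)|-2}\rep_U(e)$ for large $e$, by the $R_2$ formula) to set up a Myhill--Nerode or pumping contradiction. The paper's route is slightly cleaner: intersect $R$ with the regular language $1^*20^*$ and note that, for each fixed number $C$ of trailing zeros, only finitely many words $1^n20^C$ lie in $R_1\cup R_2\cup R_3$ (one needs $n=U_k-1$ with $k\le C+1$), whereas a $t$-state DFA accepting $R\cap 1^*20^*$ would accept $1^{n_i}20^C$ for infinitely many $n_i$ and some fixed $C$. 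In short: right plan, correct arithmetic identification of $B$, but the decisive step --- pinning down $\rep_U(B)$ and deriving the contradiction --- is missing.
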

\begin{proof}
The word $\mathbf{u}$ is trivially $U$-automatic.
By \cref{lem:diff},  we have $\partial_{\mathbf{u}}[k] = b$ if and only if $k$ is of the form $U_m-U_n=2^{m+1}-2^{n+1}$ for some $m> n \ge 0$.
Therefore $\partial_\mathbf{u}$ is $U$-automatic if and only if the set $X:= \{ U_{m+r}-U_m\mid m \ge 0, r>0 \}$ is $U$-recognizable (i.e., $\rep_U(X)$ is regular).
Set $R:=\rep_U(X)$ and 
$$ R_1:= 20^*, \quad 
 R_2:= \bigcup_{k\geq 1} 1^k 0^* \rep_U (k),\quad R_3:= \bigcup_{k\geq 1} 1^{U_k-1} \rep_U(U_k-1) 0^*= \bigcup_{k\geq 1} 1^{U_k-1} 20^{k-1} 0^*.$$
We have $R = R_1 \cup R_2 \cup R_3$ because of the following three observations. The $U$-representations of the elements in $X$ for $m=0$ and $r>0$ are given by the words in $R_1$ because, in that case, 
\[
\val_U(20^{r-1})=U_r-1=U_r-U_0.
\]
For $m>0$ and $r<U_m$, i.e., $|\rep_U (r)|\le m-1$, the $U$-representations of the elements in $X$ are given by the words in $R_2$ because
\[
\val_U(1^r 0^{m-|\rep_U (r)|-1} \rep_U (r))= U_{m+r} - U_m.
\]
Finally, the case $m>0$ and $r\ge U_m$ is handled by the words in $R_3$ since
\[
\val_U(1^{U_m-1} 20^{m+\ell-1})=U_{m+U_m+\ell} - U_m.
\]

An application of the pumping lemma shows that $R$ is not regular. By contradiction, if $R$ is regular, then $R\cap 1^*20^*$ is regular and accepted by a DFA with $t$ states. We conclude that there exist infinitely many integers $n_0<n_1<n_2<\cdots$ and a constant $C$ such that $1^{n_i}20^C$ belongs to $R\cap 1^*20^*$. This contradicts the form of the words in $R_2\cup R_3$. Consequently, $\partial_\mathbf{u}$ is not $U$-automatic.
\end{proof}

As a consequence of the previous proposition and \cref{the:kernel}, $U$ is non-addable.

\begin{remark}\label{rk:u-partialu-2aut}
One may notice that both $\infw{u}$ and
$\partial_\infw{u}$ are $2$-automatic: this follows by the B\"uchi--Bruy\`ere theorem \cite{BruyereHanselMichauxVillemaire}
from the set
\[
X:= \{ U_{m+r}-U_m\mid m \ge 0, r>0 \} = \{n \in \N \colon \partial_{\infw{u}}[n] = b\}
\]
being $2$-definable by the formula
\[
\varphi(n) := (\exists x) \, (\exists y)\,
  (x<y \wedge V_2(x)=x \wedge V_2(y)=y \wedge n=y-x),
  \]
  where $V_2(y)$ is the smallest power of $2$ occurring with a non-zero coefficient in the binary expansion of $y$.
\end{remark}

In view of the above remark, \cref{exa:ce1} could be considered as unsatisfactory. We now make use of a similar strategy but with a more complicated numeration system, for which we do not know any analogue of \cref{rk:u-partialu-2aut}. To this end, consider the non-addable numeration system from \cite[Ex.~3]{Frougny1997} or
\cite[Ex.~2]{MassuirPeltomakiRigo} defined by 
\begin{equation}\label{eq:Un}
V_0 = 1,\ V_1 = 4,\ V_2 = 15,\ V_3 = 54\quad \text{ and }\quad    V_n = 3V_{n-1} + 2V_{n-2} + 3V_{n-4}, \quad \forall \, n\ge 4.
  \end{equation}
  
  \begin{example}\label{exa:ce2}
  Consider the characteristic word $\mathbf{v}$ of $V$, i.e., $\mathbf{v}[n]=1$ if and only if $n\in\{V_j\mid j\ge 0\}$. This word is trivially $V$-automatic. 
The boundary sequence $\partial_\mathbf{v}$ starts with
\[
a\, a\, b\, a\, a\, a\, a\, a\, a\, a\, b\, a\, a\, b\, a\, a\, a\, a\, a\, a\, a\, a\, a\, a\, a\, a\, a\, a\, a\, a\, a\, a\, a\, a\, a\, a\, a\, a\, b\, a\, a\, a\, a\, a\, a\, a\, a\, a\, a\, b \cdots
\]
where again $a:=\{(0,0),(0,1),(1,0)\}$ and $b:=\{0,1\}\times \{0,1\}$.
\end{example}

Similar to the above, $\{\rep_V(n) \colon \partial_{\infw{v}}[n] = b\}$ is not regular, whence

\begin{proposition}
\label{prop:example_V}
  Let $V$ be the numeration system given by \eqref{eq:Un}. The word $\mathbf{v}$ from \cref{exa:ce2} is $V$-automatic but its boundary sequence $\partial_\mathbf{v}$ is not $V$-automatic. 
\end{proposition}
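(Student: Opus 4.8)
The plan is to imitate the proof of \cref{prop:example_U} almost verbatim, replacing the base‑$2$‑like system $U$ by the linear recurrence $V$. The word $\mathbf{v}$ is trivially $V$-automatic, being the characteristic sequence of $\{V_j\mid j\ge 0\}$. Since the gaps $V_{j+1}-V_j$ are unbounded, $\mathbf{v}$ is aperiodic and contains arbitrarily long blocks of $0$s, so \cref{lem:diff} applies and yields $\partial_{\mathbf{v}}[k]=b$ if and only if
\[
k\in X:=\{V_m-V_n\mid m>n\ge 0\}=\{V_{m+r}-V_m\mid m\ge 0,\ r>0\}.
\]
Consequently $\partial_{\mathbf{v}}$ is $V$-automatic if and only if $X$ is $V$-recognizable, and the whole statement reduces to showing that $R:=\rep_V(X)$ is not a regular language.

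To analyse $R$ I would first record that $V_n$ is the least integer with an $(n+1)$-digit greedy representation, so $\rep_V(V_n)=10^{n}$, and that the greedy digit set is $\{0,1,2,3\}$ since $V_{n+1}\le 4V_n$ for all $n$. Using the recurrence $V_n=3V_{n-1}+2V_{n-2}+3V_{n-4}$ one then normalizes the differences $V_{m+r}-V_m$ explicitly; a few sample computations already reveal the relevant shapes, e.g.\ $\rep_V(V_{m+1}-V_m)=2\,2\,0\,3\,0^{m-3}$ for $m\ge 3$, and $\rep_V(V_m-V_0)$ is the length-$m$ prefix of $(3202)^{\omega}$. Just as the representations of $X$ for the system $U$ split as $R_1\cup R_2\cup R_3$ in the proof of \cref{prop:example_U}, the representations of the elements of $X$ here decompose into a bounded number of families: in the families with a ``small'' shift $r$ the value of $r$ is stored in a bounded suffix while some fixed digit occurs in a leading run whose length grows with $m$, whereas in the family with a ``large'' shift $r$ (beyond the current scale $m$) the suffix is forced to be the representation of a quantity indexed by a single short parameter, so that a fixed suffix is compatible with only finitely many words of the family. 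Intersecting $R$ with a suitable regular language of the form $c^{*}\,d\,0^{*}$ then produces a set containing words with arbitrarily long leading $c$-runs in which, however, each admissible tail occurs only finitely often; the pumping lemma applied to $R$ (restricted by this regular probe) forces words $c^{n_i}d\,0^{C}$ for infinitely many $n_i$ and a fixed $C$, contradicting the structure of the families. Hence $R$ is not regular and $\partial_{\mathbf{v}}$ is not $V$-automatic.

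The step I expect to be the main obstacle is the explicit normalization of $V_{m+r}-V_m$. In \cref{exa:ce1} the system $U$ is essentially binary, so the subtraction $U_{m+r}-U_m$ could be performed with no carries at all; here the four-term recurrence with coefficients $3,2,0,3$ and the digit set $\{0,1,2,3\}$ force genuine carry propagation along the subtraction, and this propagation behaves differently according to the position of $r$ relative to $m$ (this is, after all, exactly the phenomenon that makes $V$ non-addable, cf.\ \cite{Frougny1997,MassuirPeltomakiRigo}). Once the finite list of families has been pinned down, the choice of the probing regular language and the pumping argument are routine and parallel the treatment of \cref{prop:example_U}.
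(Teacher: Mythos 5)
Your reduction is correct and matches the paper: $\infw{v}$ is trivially $V$-automatic, \cref{lem:diff} gives $\partial_{\infw{v}}[k]=b$ iff $k\in X=\{V_{m+r}-V_m\mid m\ge 0,\ r>0\}$, and everything hinges on showing $R=\rep_V(X)$ is not regular. The gap is in the mechanism you propose for that last step. You model the decomposition of $R$ on the $U$-case, where non-regularity came from a family $1^r0^*\rep_U(r)$ in which a growing unary run is correlated with a suffix encoding $r$, and you then claim a probe of the form $c^*d0^*$ plus pumping will finish. No such family exists for $V$. When one actually normalizes $V_{m+r}-V_m$ (splitting on $r\bmod 4$), the residues $r\equiv 0,1,2\pmod 4$ yield representations of the shape $(3202)^j\,w\,0^*$ for finitely many fixed short words $w$ — these families are \emph{regular}, so they cannot carry the non-regularity, and they contain no single-letter run correlated with a tail that your probe could detect (the growing part is the period-$4$ block $(3202)^j$, with a tail independent of $m$).

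The entire obstruction sits in the residue $r\equiv 3\pmod 4$, and it is of a different nature: one shows that $\rep_V(V_{m+4j+3}-V_m)=(3202)^j\,\mathbf{d}[0]\cdots\mathbf{d}[k-1]\,t_{m,j}$, where for growing $m$ the representations share longer and longer prefixes of a fixed infinite word $\mathbf{d}$, namely the $\beta$-expansion of $1-1/\beta^3$ for the dominant root $\beta$ of the recurrence. Non-regularity then follows by pumping \emph{inside this prefix}, which would force $\mathbf{d}$ to be eventually periodic. Proving that $\mathbf{d}$ is \emph{not} eventually periodic is the genuinely non-routine step your plan omits: it requires number-theoretic input (Schmidt's criterion for periodicity of $\beta$-expansions of elements of $\Q(\beta)$, evaluated at the Galois conjugate $\gamma$ with $|\gamma|>1$, where the two sides of the resulting identity visibly disagree). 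So your closing assessment — that once the families are listed the pumping argument is ``routine and parallel'' to \cref{prop:example_U} — is exactly where the proof would fail: the carry analysis you correctly flag as the main obstacle does not terminate in a counter-versus-suffix pattern but in an aperiodic limit word whose aperiodicity must be established separately.
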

Before diving into the proof, we set the stage with some remarks of the numeration system given in \eqref{eq:Un}.
We assume that the reader has some knowledge about $\beta$-numeration systems, see, for instance \cite{RigoBook}. 

  The
  characteristic polynomial of~\eqref{eq:Un} has two real roots $\beta$ and $\gamma$ and two complex roots with modulus less than $1$.
  We have $\beta \simeq 3.61645$ and $\gamma \simeq -1.09685$. The number
  $\beta$ is neither a Pisot number nor a Salem number. It is however a Parry number, as it is readily checked that
  $d_\beta(1) = 3203$, where for any real number $x\in[0,1]$, we let $d_{\beta}(x)=c_0 c_1 \cdots$ denote the (greedy) $\beta$-expansion of $x$ satisfying $x=\sum_{i=0}^{\infty} c_i \beta^{-i-1}$ and $x-\sum_{i=0}^{j} c_i \beta^{-i-1}< \beta^{-j-1}$ for all $j\ge 0$. The quasi-greedy expansion $d_\beta^*(1)$ of $1$, defined as $\lim_{x\to 1^-} d_\beta(x)$, is then $(3202)^\omega$. Thus $V$ is a Parry numeration system such that $\rep_V(\mathbb{N})$ is regular.
   In our setting, every element in $\mathbb{Q}(\beta)$ is a polynomial of degree at most $3$ in $\mathbb{Q}[\beta]$.

    \begin{lemma}[{\cite[Lem. 2.2]{schmidt}}]\label{lem:schmidt}
    Let $x \in [0, 1) \cap \mathbb{Q}(\beta)$, and write
$      x = q^{-1} \sum_{i = 0}^3 p_i \beta^i$
    for integers $q$ and $p_i$. If $d_\beta(x)$ is ultimately periodic, then
    \begin{equation}\label{eq:gamma}
      q^{-1} \sum_{i = 0}^3 p_i \gamma^i = \sum_{i = 1}^\infty d_\beta(x)[i]\, \gamma^{-i}.
    \end{equation}
  \end{lemma}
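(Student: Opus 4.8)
The plan is to realize both sides of \eqref{eq:gamma} as the image of $x$ under the field embedding $\sigma \colon \mathbb{Q}(\beta) \to \mathbb{R}$ determined by $\sigma(\beta) = \gamma$, and then to compute $\sigma(x)$ in two independent ways. First I would check that $\sigma$ is well defined. As recalled just before the lemma, every element of $\mathbb{Q}(\beta)$ is a polynomial of degree at most $3$ in $\beta$, so $[\mathbb{Q}(\beta):\mathbb{Q}] = 4$ and the minimal polynomial of $\beta$ is the monic degree-$4$ characteristic polynomial of $V$. Since $\gamma$ is another (real) root of this same polynomial, it is a conjugate of $\beta$, and the assignment $\beta \mapsto \gamma$ extends uniquely to a field isomorphism $\mathbb{Q}(\beta) \xrightarrow{\ \sim\ } \mathbb{Q}(\gamma) \subset \mathbb{R}$ fixing $\mathbb{Q}$. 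Applying $\sigma$ to $x = q^{-1}\sum_{i=0}^3 p_i \beta^i$ then yields $\sigma(x) = q^{-1}\sum_{i=0}^3 p_i \gamma^i$, which is the left-hand side of \eqref{eq:gamma}.

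Next I would use ultimate periodicity to obtain a second, closed-form expression for $x$ inside $\mathbb{Q}(\beta)$. Write $d_\beta(x) = a_1 \cdots a_N (a_{N+1} \cdots a_{N+P})^\omega$ with preperiod length $N$ and period length $P$, and set $A = \sum_{i=1}^N a_i \beta^{-i}$ and $B = \sum_{k=1}^P a_{N+k}\beta^{-k}$. Summing the geometric series in $\beta^{-P}$, which converges since $\beta > 1$, gives the identity of real numbers
\[
x = \sum_{i=1}^\infty a_i \beta^{-i} = A + \frac{\beta^{-N} B}{1 - \beta^{-P}}.
\]
Since $A,B \in \mathbb{Q}(\beta)$ and $1 - \beta^{-P} \neq 0$, the right-hand side is a genuine element of $\mathbb{Q}(\beta)$ equal to $x$, so I may apply $\sigma$ to it. As $\sigma$ is a ring homomorphism fixing $\mathbb{Q}$ and $1 - \gamma^{-P} \neq 0$ (because $|\gamma| > 1$ forces $\gamma^{P} \neq 1$), this produces
\[
\sigma(x) = A_\gamma + \frac{\gamma^{-N} B_\gamma}{1 - \gamma^{-P}}, \qquad A_\gamma := \sum_{i=1}^N a_i \gamma^{-i}, \quad B_\gamma := \sum_{k=1}^P a_{N+k}\gamma^{-k}.
\]

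Finally, I would reverse the geometric-series manipulation at $\gamma$. Because $|\gamma| > 1$ and the digits are bounded by $\lfloor \beta \rfloor$, the series $\sum_{i\ge 1} a_i \gamma^{-i}$ converges absolutely, and regrouping its ultimately periodic tail reproduces exactly $A_\gamma + \gamma^{-N} B_\gamma/(1 - \gamma^{-P})$. Hence $\sigma(x) = \sum_{i\ge 1} a_i \gamma^{-i}$, the right-hand side of \eqref{eq:gamma}, and comparing the two evaluations of $\sigma(x)$ gives the claim.

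The main obstacle is conceptual rather than computational: one must ensure that the conjugate series on the right of \eqref{eq:gamma} converges at all. This is precisely where it matters that $\gamma$ is the real conjugate of modulus $|\gamma| \approx 1.097 > 1$, rather than one of the two complex conjugates of modulus $< 1$; for the latter the series $\sum a_i \gamma^{-i}$ would diverge and the displayed identity would be meaningless. Once this convergence is secured, the remainder is the routine observation that a field embedding commutes with the finite rational expression in $\beta$ extracted from an ultimately periodic expansion.
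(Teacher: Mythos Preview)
Your argument is correct and is the standard Galois-conjugation proof of Schmidt's lemma. Note, however, that the paper does not supply its own proof of this statement: it is quoted verbatim from \cite[Lem.~2.2]{schmidt} and used as a black box in the proof of \cref{prop:example_V}, so there is nothing in the paper to compare your proposal against. Your write-up would serve perfectly well as a self-contained replacement for the citation.
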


\begin{proof}[Proof of \cref{prop:example_V}]
By \cref{lem:diff},  we have $\partial_{\infw{v}}[k] = b$ if and only if $k$ is of the form $V_{m+r}-V_m$ for some $m\ge 0$ and $r>0$. We discuss the value of $r$ modulo~$4$: 
\begin{align*}
\rep_V(\{V_{m+4j}-V_m\mid m\ge 0\}) &= (3202)^j0^*,\quad   j\ge 1\\
\rep_V(\{V_{m+4j+1}-V_m\mid m=0,1,2\}) &= (3202)^j(3+23+221),\quad  j\ge 0\\
\rep_V(\{V_{m+4j+1}-V_m\mid m\ge 3\}) &= (3202)^j(2203)0^*,\quad    j\ge 0\\
\rep_V(\{V_{m+4j+2}-V_m\mid m=0,1\}) &= (3202)^j(32+311),\quad  j\ge 0\\
\rep_V(\{V_{m+4j+2}-V_m\mid m\ge 2\}) &= (3202)^j(3103)0^*,\quad    j\ge 0.
\end{align*}
The first equality comes from the fact that $(3202)^j0^m$ is a greedy representation and 
\[
\val_V((3202)^j0^m)+V_m= \val_V((3202)^{j-1}(3203) 0^m)=V_{m+4j}.
\]
The reasoning is similar for the third and fifth equalities. For the third, we get
\[
\val_V((3202)^j(2203)0^m)+V_{m+3}= \val_V((3202)^{j-1}(3203) 0^m)=V_{m+4j+4},
\]
which means that $\rep_V(V_{m+4j+4}-V_{m+3})$ is $(3202)^j(2203)0^m$ because it is lexicographically less than $d_\beta^*(1)$ and thus a valid expansion.
Similarly, for the fifth, we have
\[
\val_V((3202)^j(3103)0^m)+V_{m+2}= \val_V((3202)^{j-1}(3203) 0^m)=V_{m+4j+4}.
\]

Finally, we prove that, for all $k$, there exists $M$ such that for all $m\ge M$ and $j\ge 0$, there exists a suffix $t_{m,j}\in \{0,1,2,3\}^*$ of length $m-k-2$ such that  
\begin{equation}\label{eq:forml of V-V}
\rep_V(V_{m+4j+3}-V_m)=(3202)^j\ \mathbf{d}[0]\cdots \mathbf{d}[k-1]\ t_{m,j},
\end{equation}
where $\mathbf{d}=3131202300020211200210312213101221120211\cdots$ is the $\beta$-expansion of $1-1/\beta^3=(18-7\beta-6\beta^2+2\beta^3)/9$. Roughly speaking, $\rep_V(V_{m+4j+3}-V_m)$ starts with $(3202)^j$ but then, for increasing values of $m$, the corresponding words share longer and longer prefixes of $\mathbf{d}$. See \cref{tab:beta3}.
\begin{table}[h!tb]
  $$\begin{array}{r|cccccccccccccccccccccccccccccccccccccc}
      m& \\
      \hline
 0&1&0&0&0\\
 1&3&2&0&0\\
 2&3&1&3&1&3\\
 3&3&1&3&1&2&2\\
 4&3&1&3&1&2&1&0\\
 5&3&1&3&1&2&0&3&1\\
 6&3&1&3&1&2&0&2&3&1\\
 7&3&1&3&1&2&0&2&3&0&1\\
 8&3&1&3&1&2&0&2&3&0&0&1\\
 9&3&1&3&1&2&0&2&3&0&0&1&0\\
 10&3&1&3&1&2&0&2&3&0&0&0&2&2\\
 11&3&1&3&1&2&0&2&3&0&0&0&2&1&0\\
 12&3&1&3&1&2&0&2&3&0&0&0&2&0&2&2\\
  13&3&1&3&1&2&0&2&3&0&0&0&2&0&2&1&3\\
  14&3&1&3&1&2&0&2&3&0&0&0&2&0&2&1&1&3\\
  \end{array}$$
  \caption{The $V$-representations of $V_{m+3}-V_m$ for $m=0,\ldots,14$.}
  \label{tab:beta3}
\end{table}
Let us first focus on the case $j=0$, i.e., on the $V$-representation of $V_{m+3}-V_m$. Let $k>0$. Proceed by contradiction and assume that for some $t< k$, $\mathbf{d}[0]\cdots \mathbf{d}[t]$ is not a greedy expansion, i.e., 
\[
V_{m+3}-V_m-\sum_{i=0}^t \mathbf{d}[i]\, V_{m+2-i}\ge V_{m+2-t}.
\]
Dividing both sides by $V_{m+3}$ and letting $m$ tend to infinity, we get
\[
1-1/\beta^3-\sum_{i=0}^t \mathbf{d}[i]/\beta^{i+1}\ge 1/\beta^{t+1},
\]
contradicting the fact that $\mathbf{d}$ is the $\beta$-expansion of $1-1/\beta^3$.
Now, for $j\ge 1$, write
\[
V_{m+4j+3}-V_{m}=\sum_{i=1}^j (V_{m+4i+3}-V_{m+4(i-1)+3})+V_{m+3}-V_m.
\]
By using the recurrence relation defining $V$, it is clear that
\[
\rep_V(V_{m+4i+3}-V_{m+4(i-1)+3})=3202\, 0^{m+4(i-1)+2}.
\]
Hence $\rep_V(V_{m+4j+3}-V_m)$ has the expected form~\eqref{eq:forml of V-V}.

We now show that $\mathbf{d}$ is not ultimately periodic. We apply \cref{lem:schmidt} for $x=1-1/\beta^3$. The left-hand side in \eqref{eq:gamma} is approximately $1.75$. Since $|\gamma|>1$, the right-hand side converges (absolutely) and the first few digits of its limit are $-3.57$. Hence $\mathbf{d}$ is not ultimately periodic.

To conclude the proof, we apply the pumping lemma to show that the language $R:=\rep_V(\{V_{m+r}-V_m\mid m\ge 0,r>0\})$ is not regular. Proceed by contradiction. Suppose that $R$ is accepted by a DFA with $\ell$ states. Then there exist words $u,v,w$ with $0<|v|\le \ell$ and $\mathbf{d}$ has $uv$ as prefix such that, for all $n$, $uv^nw$ belongs to $R$. This is a contradiction because $\mathbf{d}$ is not periodic.
\end{proof}

\begin{remark}
  In the above proof, it is interesting to note that the non-regularity of the language $R$ is really associated with $V_{m+r}-V_m$ for $r$ congruent to $3$ modulo~$4$. Indeed, we have used the fact that $d_\beta(1-1/\beta^3)$ is not ultimately periodic whereas
$d_\beta(1-1/\beta)=2203$, $d_\beta(1-1/\beta^2)=3103$ and $d_\beta(1-1/\beta^4)=3202$. 
\end{remark}

\begin{remark}
We do not know whether $\mathbf{v}$ and $\partial_{\mathbf{v}}$ are both $V'$-automatic for some numeration system $V'$.
\end{remark}

\section{The extended boundary sequences of Sturmian words}
\label{sec:Sturmian}
We give two descriptions of the $\ell$-boundary sequences of
Sturmian words (\cref{the:sturmian,prop:another-charact}) and discuss some of their word combinatorial properties.
We first recap minimal background on Sturmian words seen as codings of rotations.
For a general reference, see \cite[\S 2]{Lothaire}.
Let $\alpha$, $\rho \in \mathbb{T}:= [0,1)$ with $\alpha$ irrational. Define the \emph{rotation} of the $1$-dimensional torus $R_{\alpha}\colon \mathbb{T} \to \mathbb{T}$ by $R_{\alpha}(x) = \{x + \alpha\}$,
where $\{\,\cdot\, \}$ denotes the fractional part. Let 
$I_0 = [0,1-\alpha)$ (or $I_0 = (0,1-\alpha]$) and
$I_1 = \mathbb{T}\setminus I_0$. 
(The endpoints of $I_0$ will not matter in the forthcoming arguments.) 
Define the coding $\nu \colon \mathbb{T} \to \{0,1\}$ by $\nu(x) = 0$ if $x \in I_0$, otherwise $\nu(x) = 1$. We define the word
$\mathbf{s}_{\alpha,\rho}$ by
$\mathbf{s}_{\alpha,\rho}[n] = \nu(R_{\alpha}^n(\rho))$, for all $n\ge 0$.
We call $\alpha$ the {\em slope} and $\rho$ the {\em intercept} of $\mathbf{s}_{\alpha,\rho}$.
The \emph{characteristic Sturmian word of slope $\alpha$} is $\mathbf{s}_{\alpha,\alpha}$.

\subsection{A description of the extended boundary sequence}\label{subsec:first}

In the following, a \emph{sliding block code of length $r$} is a mapping
$\mathfrak{B}\colon A^{\N} \to B^{\N}$ defined by
$\mathfrak{B}(\infw{x})[n] = \mathcal{B}(\infw{x}[n]\cdots \infw{x}[n + r-1])$
for all $n\ge 0$ and some $\mathcal{B} \colon A^{r} \to B$. Let $T \colon A^{\N} \to A^{\N}$
denote the shift map $Tx_0x_1x_2\cdots = x_1x_2\cdots$.

\begin{theorem}
\label{the:sturmian}
For a Sturmian word $\infw{s}$ of slope $\alpha$ (and intercept $\rho$) and $\ell \geq 1$, the (shifted) $\ell$-boundary sequence $T\partial_{\infw{s},\ell}$
is obtained by a sliding block code of length $2\ell$ applied to
the characteristic Sturmian word of slope $\alpha$.
\end{theorem}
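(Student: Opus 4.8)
The plan is to use the geometric picture of Sturmian words as codings of the rotation $R_\alpha$ and to show that the entire boundary set $\partial_{\infw{s},\ell}[m]$ is controlled by the position, \emph{relative to the length-$\ell$ cylinders}, of only $\ell+1$ orbit points, and that this data is visible in a window of $2\ell$ consecutive letters of the characteristic word.

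First I would note that $\partial_{\infw{s},\ell}[m]$ depends only on $\alpha$ (all Sturmian words of slope $\alpha$ have the same factors), so we may argue purely on $\mathbb{T}$. For $w\in\Fac_\ell(\infw{s})$ let $C_w=\{x\in\mathbb{T}\mid \nu(x)\nu(R_\alpha x)\cdots\nu(R_\alpha^{\ell-1}x)=w\}$; these $\ell+1$ cylinders are exactly the arcs cut out by $E:=\{\{-j\alpha\}\mid 0\le j\le\ell\}$, and $w\mapsto C_w$ is a fixed bijection. Since every $R_\alpha$-orbit is dense,
\[
(u,v)\in\partial_{\infw{s},\ell}[m]\iff C_u\cap R_\alpha^{-m}(C_v)\neq\emptyset .
\]
The arc $R_\alpha^{-m}(C_v)$ is cut out by $E-m\alpha$, and for $m\ge\ell+1$ no point of $E$ meets a point of $E-m\alpha$ (a coincidence would force $m=j-k$ with $0\le j,k\le\ell$, i.e.\ $m\le\ell$); hence the $2\ell+2$ points of $E\cup(E-m\alpha)$ are distinct and each intersection above is either empty or has nonempty interior. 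The unique exceptional index $m=\ell$ is precisely the first term of $\partial_{\infw{s},\ell}$, which by \cref{prop:first-letter} occurs only once and is why the statement concerns only $T\partial_{\infw{s},\ell}$. Thus, for $m\ge\ell+1$, $\partial_{\infw{s},\ell}[m]$ is a fixed function of the (two‑coloured, labelled) cyclic arrangement of the points $\{-j\alpha\}_{0\le j\le\ell}$ and $\{-(m+j)\alpha\}_{0\le j\le\ell}$, since that arrangement tells us which arc is $C_u$, which is $R_\alpha^{-m}(C_v)$, and whether they overlap.

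Next I would show this arrangement is determined by the single datum: for each $j\in\{0,\dots,\ell\}$, which cylinder $C_{w_j}$ contains $\{-(m+j)\alpha\}$. Indeed, the cyclic orders within each colour class are fixed (the second is a rotation of the first, and cyclic order is translation invariant); and if $\{-(m+a)\alpha\}$ and $\{-(m+b)\alpha\}$ lie in the same arc $C_w$, their order along that arc equals their cyclic order on the circle, which by translation invariance is the fixed cyclic order of $\{-a\alpha\}$ versus $\{-b\alpha\}$. So knowing the $C_{w_j}$ reconstructs the whole arrangement. Finally, $\{-(m+j)\alpha\}\in C_w$ iff $\nu(\{-(m+j)\alpha\})\nu(\{-(m+j-1)\alpha\})\cdots\nu(\{-(m+j-\ell+1)\alpha\})=w$, and a short computation (the reflection $x\mapsto 1-x$, valid once $m+j\ge 2$, which holds here) gives $\nu(\{-k\alpha\})=\infw{s}_{\alpha,\alpha}[k-2]$; hence $C_{w_j}$ is determined by $\infw{s}_{\alpha,\alpha}$ at positions $m+j-\ell-1,\dots,m+j-2$. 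Letting $j$ run over $\{0,\dots,\ell\}$, all these positions lie in the block $\{m-\ell-1,\dots,m+\ell-2\}$ of $2\ell$ consecutive indices. Therefore there is $\mathcal B\colon\{0,1\}^{2\ell}\to 2^{A^\ell\times A^\ell}$ with $\partial_{\infw{s},\ell}[m]=\mathcal B\big(\infw{s}_{\alpha,\alpha}[m-\ell-1]\cdots\infw{s}_{\alpha,\alpha}[m+\ell-2]\big)$ for every $m\ge\ell+1$, which after re‑indexing reads $(T\partial_{\infw{s},\ell})[n]=\mathcal B\big(\infw{s}_{\alpha,\alpha}[n]\cdots\infw{s}_{\alpha,\alpha}[n+2\ell-1]\big)$, i.e.\ $T\partial_{\infw{s},\ell}$ is the image of $\infw{s}_{\alpha,\alpha}$ under a sliding block code of length $2\ell$.

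The main obstacle is the bookkeeping in the final step: establishing the identity $\nu(\{-k\alpha\})=\infw{s}_{\alpha,\alpha}[k-2]$ (which involves the reflection $x\mapsto 1-x$ and therefore a swap of the endpoint conventions for $I_0$ and $I_1$ — harmless in the non‑degenerate range but requiring a line of justification), and then verifying that the reversal and the offset conspire to produce a window of length exactly $2\ell$ rather than $2\ell+1$. A secondary point is the non‑degeneracy analysis singling out $m=\ell$ as the only bad index, which is what forces the shift $T$; one should also confirm, using \cref{prop:first-letter} and aperiodicity of $\infw{s}$, that no information is lost by discarding that first term.
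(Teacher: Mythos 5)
Your proof is correct and follows essentially the same route as the paper: reduce membership of $(u,v)$ in $\partial_{\infw{s},\ell}[m]$ to non-emptiness of $I_u\cap R_\alpha^{-m}(I_v)$ via density, observe that for $m>\ell$ the boundary set is determined by which length-$\ell$ cylinder contains each of the $\ell+1$ rotated endpoints, and read that data off a window of $2\ell$ consecutive letters of $\infw{s}_{\alpha,\alpha}$. The only (cosmetic) difference is bookkeeping: the paper factors the code through the intermediate rotation word $\infw{r}$ (an $\ell$-block code followed by an $(\ell+1)$-block code), whereas you compute the window directly via $\nu(\{-k\alpha\})=\infw{s}_{\alpha,\alpha}[k-2]$ — and your justification that the constellation determines the full cyclic arrangement is, if anything, slightly more explicit than the paper's.
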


To prove the theorem we develop the required machinery. For a word~$u=u_0\cdots u_{\ell-1}$, we let $I_u= \bigcap_{i=0}^{\ell-1}R_\alpha^{-i} (I_{u_i})$.
It is well known that $u$ occurs at position $i$ in $\mathbf{s}_{\alpha,\rho}$ if and only if $R_{\alpha}^{i}(\rho) \in I_u$. These intervals of factors
of length $\ell$ can also be described as follows: order the set
$\bigr\{ \{-j\alpha\} \bigr\}_{j = 0}^{\ell}$
as $0 = i_0 <i_1<i_2<\cdots < i_\ell$.
For convenience, we set $i_{\ell+1}=1$.
If the $\ell+1$ factors of length~$\ell$ of the Sturmian word
$\mathbf{s}_{\alpha,\rho}$ are lexicographically ordered as $w_0<w_1<\cdots <w_\ell$, then
$I_{w_j}=[i_j,i_{j+1})$ for each $j\in\{0,\ldots,\ell\}$. From the
following claim it is evident that the intercept $\rho$ plays no further role in our considerations. (This also follows from the fact that two Sturmian words have the same set of factors if and only if they have the same slope.)

\begin{claim}
Let $n\ge\ell$ and $u$, $v$ be length-$\ell$ factors of $\mathbf{s}_{\alpha,\rho}$. Then $ (u,v)\in \partial_{\mathbf{x},\ell}[n]$ if and only if
$R_\alpha^n(I_u) \cap I_v\neq\emptyset$.
\end{claim}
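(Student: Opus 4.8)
The plan is to translate membership of a pair $(u,v)$ in the boundary set into a statement about the orbit $\{R_\alpha^i(\rho)\}_{i\ge 0}$ of the intercept, and then to invoke density of that orbit in $\mathbb{T}$. First I would record the elementary reformulation. By the occurrence criterion recalled just above the claim, the length-$\ell$ factor $u$ occurs at position $i$ in $\infw{s}=\infw{s}_{\alpha,\rho}$ if and only if $R_\alpha^i(\rho)\in I_u$, and likewise $v$ occurs at position $i+n$ if and only if $R_\alpha^{i+n}(\rho)\in I_v$, that is, $R_\alpha^i(\rho)\in R_\alpha^{-n}(I_v)$. Since $n\ge\ell$, the occurrence of $u$ at position $i$ and of $v$ at position $i+n$ do not overlap, so the factor of $\infw{s}$ of length $n+\ell$ starting at $i$ is of the form $uyv$ with $|y|=n-\ell$; conversely every such factor arises this way. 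Hence $(u,v)\in\partial_{\infw{s},\ell}[n]$ if and only if there exists $i\ge 0$ with $R_\alpha^i(\rho)\in I_u\cap R_\alpha^{-n}(I_v)$, and, applying the bijection $R_\alpha^{-n}$, the condition $R_\alpha^n(I_u)\cap I_v\neq\emptyset$ is equivalent to $I_u\cap R_\alpha^{-n}(I_v)\neq\emptyset$.

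The forward implication is then immediate: from such an $i$, the point $R_\alpha^{n+i}(\rho)$ lies in $R_\alpha^n(I_u)\cap I_v$, which is therefore nonempty. For the converse, suppose $J:=I_u\cap R_\alpha^{-n}(I_v)\neq\emptyset$. Both $I_u$ and $R_\alpha^{-n}(I_v)$ are half-open arcs of $\mathbb{T}$ of positive length ($R_\alpha^{-n}$ is an isometry of $\mathbb{T}$ carrying the half-open arc $I_v$ to a half-open arc of the same length). The key elementary observation is that the intersection of two positive-length half-open arcs sharing the same orientation (the one fixed by the convention $I_0=[0,1-\alpha)$) is either empty or contains a half-open sub-arc $[p,p+\varepsilon)$ of positive length: if $p\in J$, then $p$ is either the left endpoint of, or an interior point of, each of the two arcs, and in every such case a short arc $[p,p+\varepsilon)$ with $\varepsilon>0$ is contained in $J$. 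Thus $J$ has nonempty interior, and since $\alpha$ is irrational the orbit $\{R_\alpha^i(\rho)\}_{i\ge 0}$ is dense in $\mathbb{T}$, so it meets $J$; the corresponding index $i\ge 0$ witnesses $(u,v)\in\partial_{\infw{s},\ell}[n]$.

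The only mildly delicate point is the bookkeeping with the half-open endpoints of the arcs $I_w$ and their $R_\alpha^{-n}$-images: one must rule out that a nonempty $J$ degenerates to a single point because of mismatched endpoints. Since all arcs involved are half-open with the same orientation, this cannot occur, so the case analysis above is uniform; for the alternative convention $I_0=(0,1-\alpha]$ one argues symmetrically with sub-arcs of the form $(p-\varepsilon,p]$. Apart from this short check and the invocation of density, the proof is just the translation in the first paragraph.
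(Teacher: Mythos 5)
Your proposal is correct and follows essentially the same route as the paper: reformulate membership as the existence of $i$ with $R_\alpha^i(\rho)\in I_u\cap R_\alpha^{-n}(I_v)$ and then invoke density of the orbit. The only difference is that you spell out why a nonempty intersection of the half-open arcs cannot degenerate to a single point, a detail the paper leaves implicit behind the remark that the intersection is a finite union of intervals.
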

\begin{claimproof}
We have $(u,v)\in \partial_{\mathbf{x},\ell}[n]$
if and only if there exists $i$ such that $R_{\alpha}^{i}(\rho) \in I_u$ and
$R_{\alpha}^{i+n}(\rho) \in I_v$, or equivalently,
$R_{\alpha}^{i}(\rho) \in I_u \cap R_{\alpha}^{-n}(I_v)$. Notice that the intersection is a finite union of (possibly empty) intervals. Since the set $(R_{\alpha}^{i}(\rho))_{i\in \N}$ is dense in $\mathbb{T}$, it follows that
there exists $i$ such that $R_{\alpha}^{i}(\rho) \in I_u \cap R_{\alpha}^{-n}(I_v)$ if and only if
$I_u \cap R_{\alpha}^{-n}(I_v) \neq \emptyset$. The claim
follows by applying the isomorphism $R_{\alpha}^n$ to the intersection.
\end{claimproof}

The endpoints of $I_u$ are of the form $i_j$ and $i_{j+1}$ for some $j\in\{0,\ldots,\ell\}$. Hence, for $n \geq \ell$, 
the set of pairs belonging to $\partial_{\mathbf{x},\ell}[n]$ is determined by the positions of the rotated endpoints $R_\alpha^n(i_j)$ within the intervals $I_{w_k}$.
Notice that each rotated endpoint $R_{\alpha}^n(i_j)$ always
lies in the interior of some $I_{w_k}$ whenever $n > \ell$. When $n = \ell$,
we have $R_{\alpha}^n(\{-\ell\alpha\}) = 0$, which is an endpoint of
one of the intervals $I_{w_k}$.
For the time being we assume $n > \ell$, and return to the
case $n=\ell$ in \cref{prop:first-letter}.
Now, for example, if $R_\alpha^{n}(i_j) \in I_{w_k}$ then we have
$(w_j,w_k)$, $(w_{j-1},w_k) \in \partial_{\mathbf{x},\ell}[n]$
(if $j=0$, $w_{j-1}$ is replaced with $w_\ell$). Determining the boundary sets can be quite an intricate exercise; see \cref{exa:ffib}.

An alternative to considering the positions of the points $R_\alpha^{n}(i_j)$
within the intervals $I_{w_k}$ is to consider the positions
of the points $R_\alpha^{n}(\{-j \alpha\})$ within the intervals
$I_{w_k}$---the only difference is the order of enumeration.
For each $n > \ell$, there is a map $\sigma = \sigma_n \in T_{\ell}$,
where $T_{\ell}$ is the set of mappings from $\{0,\ldots,\ell\}$
to itself, such that
\begin{equation}\label{eq:realizable-constellations}
R_{\alpha}^n(\{-j\alpha\}) \in I_{w_{\sigma(j)}} \quad \forall\, j\in \{0,\ldots,\ell\}.
\end{equation}
The realizable such configurations in \eqref{eq:realizable-constellations} are called \emph{constellations}.
These points, when ordered according to the
$i_j$'s, determine the boundary set
$\partial_{\infw{s},\ell}[n]$ as described above.
    See \cref{exa:ffib} (and \cref{exa:intricated}) for an illustration of the construction.

\begin{definition}\label{def:r}
Let $\sigma \in T_{\ell}$ be such that
\eqref{eq:realizable-constellations} holds for some $n\in \N$. We
define $\partial_{\sigma}\in 2^{A^\ell\times A^\ell}$ as the
boundary set corresponding to any constellation inducing
$\sigma$.
\end{definition}
It is now evident that if $\sigma_n = \sigma_m =: \sigma$, then $\partial_{\infw{s},\ell}[n] = \partial_{\sigma} = \partial_{\infw{s},\ell}[m]$.

\begin{example}\label{exa:ffib}
The Fibonacci word $\mathbf{f}$ is $\mathbf{s}_{\alpha,\alpha}$ for $\alpha=(3-\sqrt{5})/2\simeq 0.382$. In \cref{fig:rotations}, the outer circle shows the partition with the interval $I_{w_0}$, \ldots, $I_{w_\ell}$ and the inner circle shows the positions of the points $R_\alpha^n(\{-j\alpha\})$ for $\ell=4$ and $n=17$. 
The corresponding words $w_0,\ldots,w_\ell$ are written next to their interval.
Here $\sigma_n$ is defined by
$(0,1,2,3,4) \mapsto (2,0,3,1,4)$. For any constellation inducing $\sigma_n$, we see the pairs belonging to $\partial_{\sigma_n} = \partial_{\mathbf{f},4}[17]$ from \cref{fig:rotations}: the inner intervals (obtained from the outer intervals by applying $R_\alpha^{17}$) give the prefix matching the suffix of the overlapping outer intervals, in clockwise order:
\[
   \smatrix{0010 \\0101}, \smatrix{0010 \\ 1001}, \smatrix{0100 \\1001}, \smatrix{0100\\1010}, \smatrix{0101 \\ 1010}, \smatrix{0101\\ 0010}, \smatrix{1001 \\ 0010}, \smatrix{1001\\0100}, \smatrix{1010 \\ 0100}, \smatrix{1010 \\ 0101}.
\]
Coming back to the introductory \cref{exa:intro}, the five sets $a_1$, \ldots, $a_5$ correspond to the situations depicted from left to right in \cref{fig:exa2fib}. For instance, in the fourth picture, we understand why $10$ is a prefix belonging to three pairs in $a_4$: the red inner interval intersects the three outer intervals of the partition. The situation is similar in the fifth picture where $01$ is the prefix of three pairs in $a_5$. It is however not the case with the first three sets/pictures.
\begin{figure}
\centering
\begin{minipage}{.39\textwidth}
  \centering
  \includegraphics[width=5.4cm]{./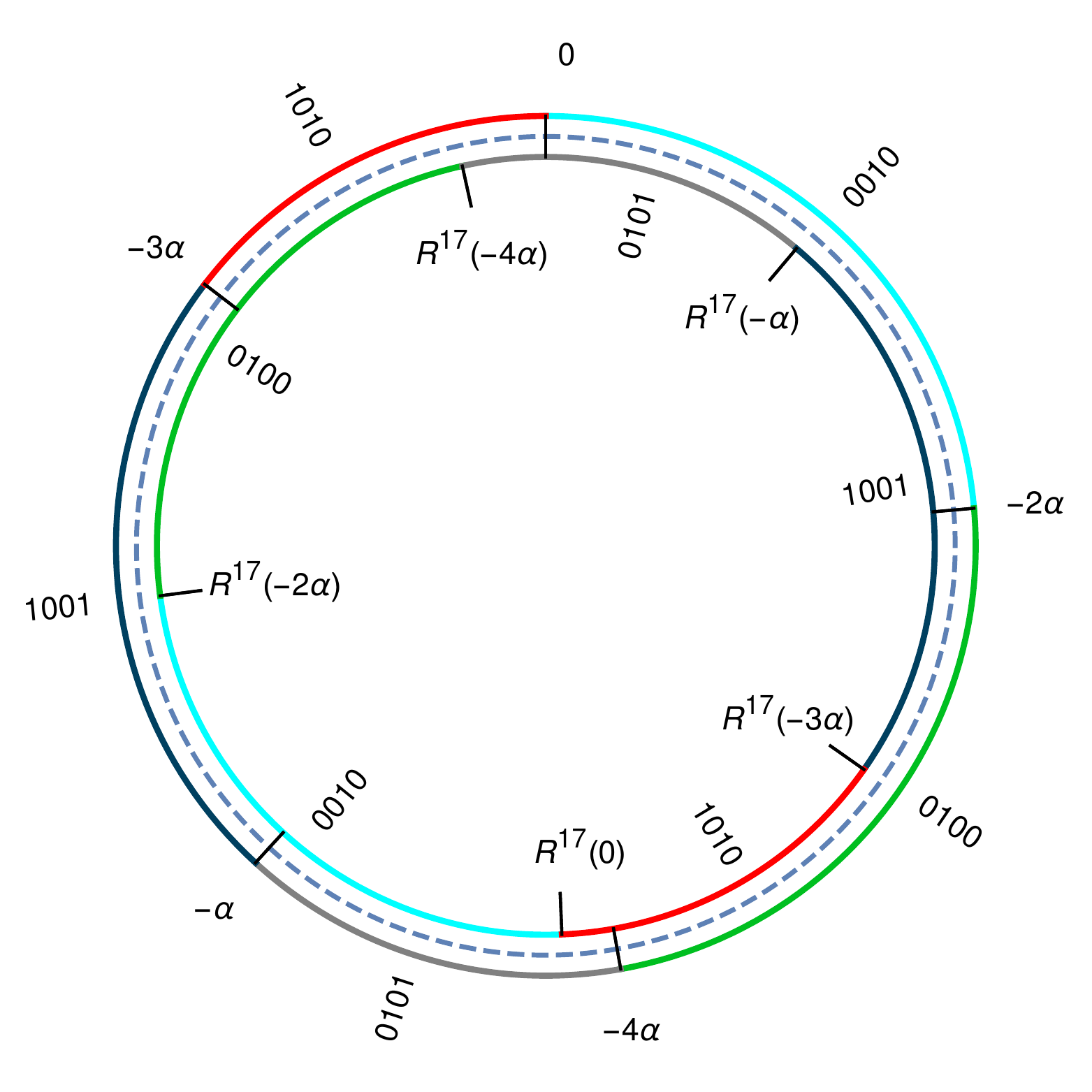}
  \caption{A constellation for\\ $\alpha=(3-\sqrt{5})/2$, $\ell=4$ and $n=17$.}
  \label{fig:rotations}
\end{minipage}%
\begin{minipage}{.61\textwidth}
  \centering
  \includegraphics[height=5.35cm]{./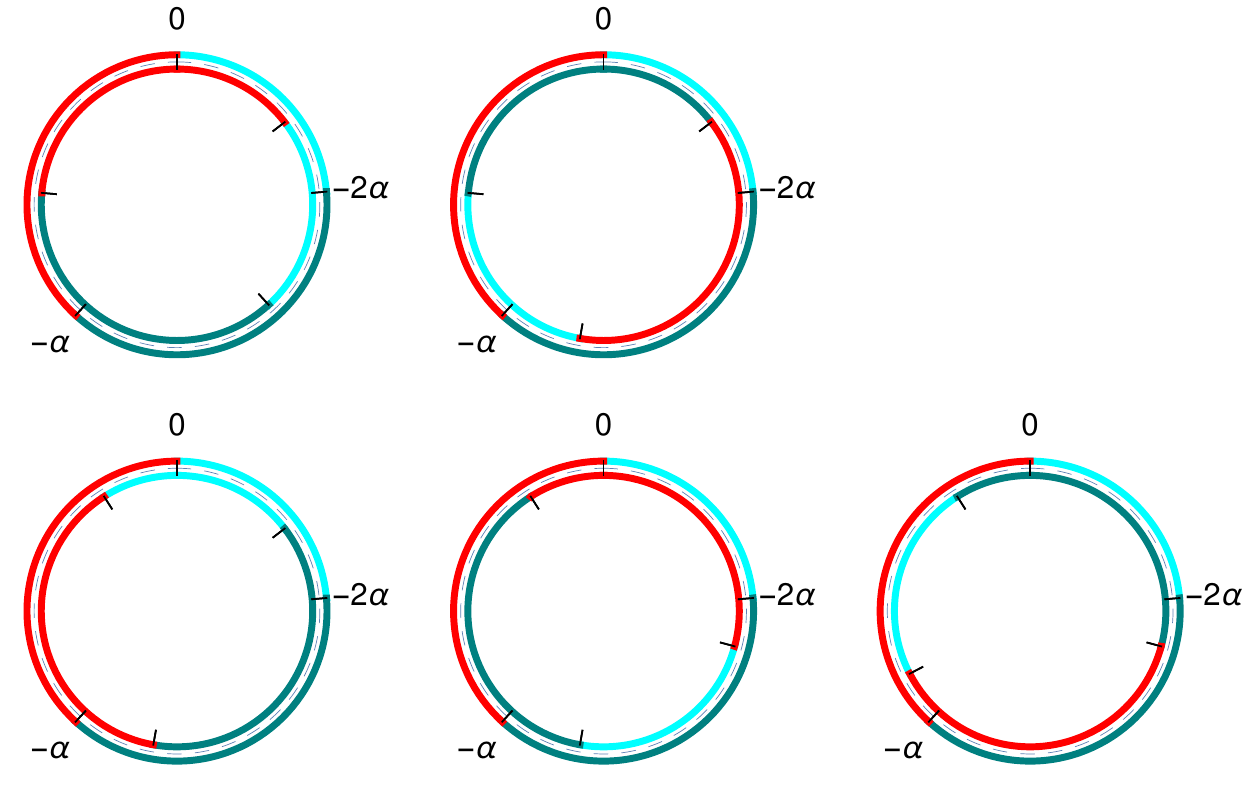}
  \caption{Some constellations for $\alpha=(3-\sqrt{5})/2$ and $\ell=2$ inducing the five maps $\sigma_n$ sending $(0,1,2)$, resp., to\\
    $(0,2,1)$, $(1,0,2)$, $(2,1,0)$, $(1,2,1)$, $(2,1,2)$.}
  \label{fig:exa2fib}
\end{minipage}
\end{figure}
\end{example}

We give an accompanying example to \cref{exa:ffib} for the reader to clarify the notion on constellations.

\begin{example}\label{exa:intricated}
What matters to determine the pairs belonging to the $\ell$-boundary
sequence are the non-empty intersections of the form
$R_\alpha^n(I_u) \cap I_v$. There are situations where
$R_\alpha^n(I_u) \subset I_v$ or $R_\alpha^n(I_u) \supset I_v$, whence
$\sigma_n$ is neither injective nor surjective.
   For instance, this is the case for the last two constellations in \cref{fig:exa2fib} (we have $\sigma_n$ equals $(0,1,2) \mapsto (1,2,1)$,
  and $(0,1,2) \mapsto (2,1,2)$, respectively). With $\alpha=(\pi-3)/2\simeq 0.0708$ and $\ell=5$, the partition of $\mathbb{T}$ is made of $5$ short intervals of length~$\alpha$ and one large interval of length $1-5\alpha>0.5$. 
 In \cref{fig:pi}, we see that five or four ``short'' rotated intervals are included in the same large interval (for $n$ equal to $21$ and $10$ respectively). In particular, counting the number of matching pairs of colors around the circle, we see that
  $\partial_{\mathbf{x},5}[5]= \partial_{\mathbf{x},5}[21]$ with cardinality $11$ and $|\partial_{\mathbf{x},5}[10]|=12$. Contrarily to \cref{exa:ffib} and \cref{fig:rotations} where each prefix and suffix belong to two pairs, here one prefix (corresponding to the large interval) belongs to six pairs of the boundary and the other prefixes belong to one pair (or two for one short interval in the constellation on the right of \cref{fig:pi}).
  \begin{figure}[h!t]
    \centering
    \includegraphics[width=.32\linewidth]{./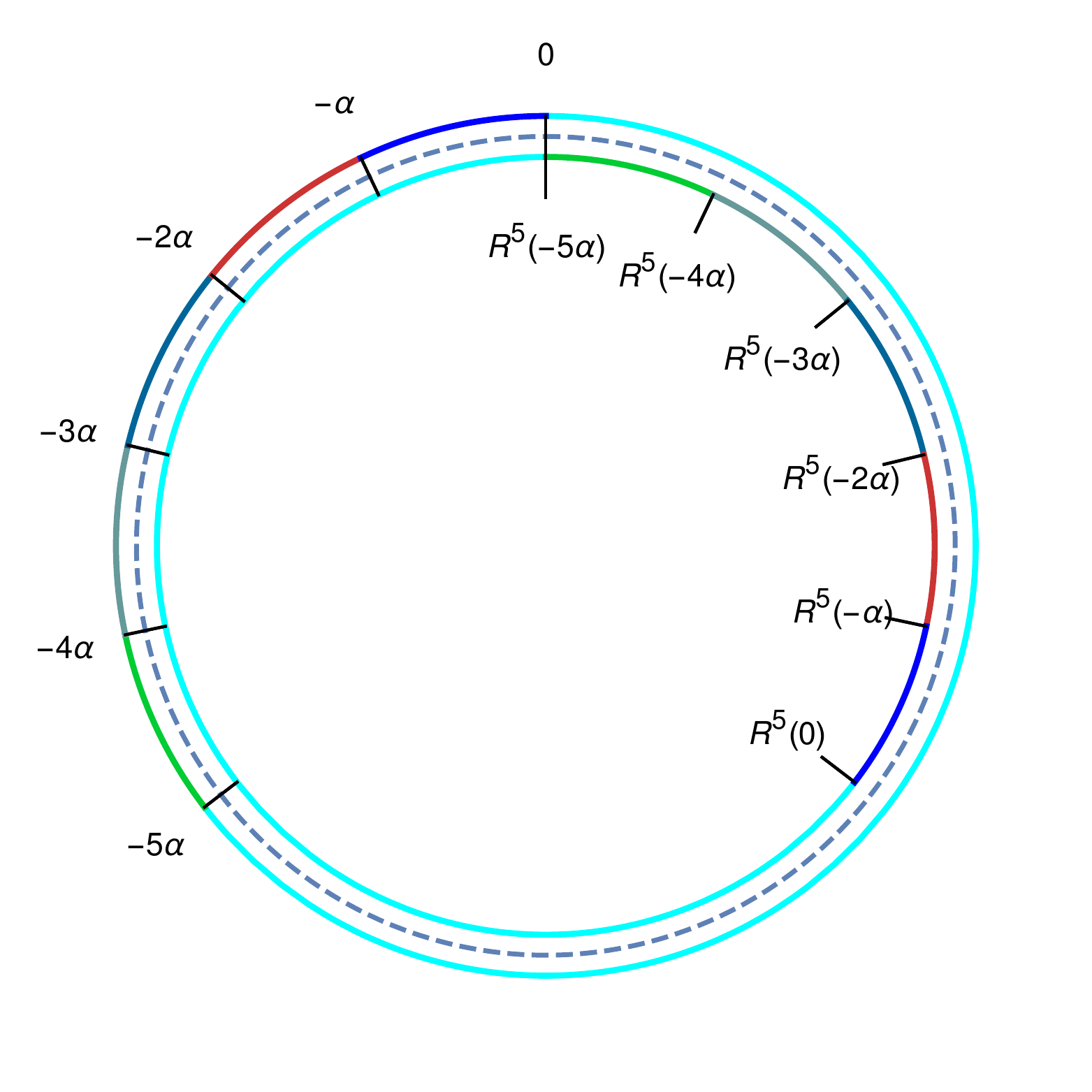}
        \includegraphics[width=.32\linewidth]{./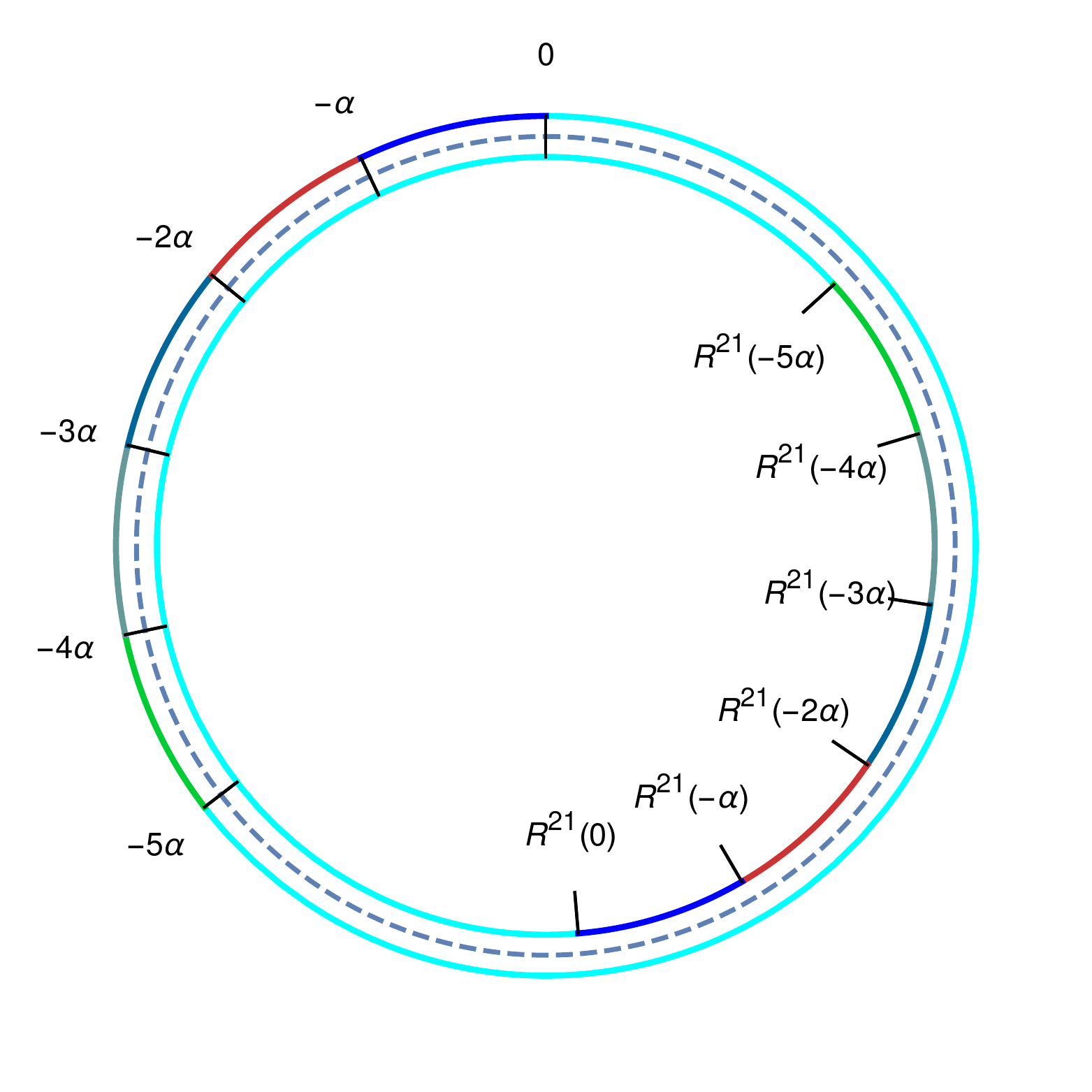}
        \includegraphics[width=.32\linewidth]{./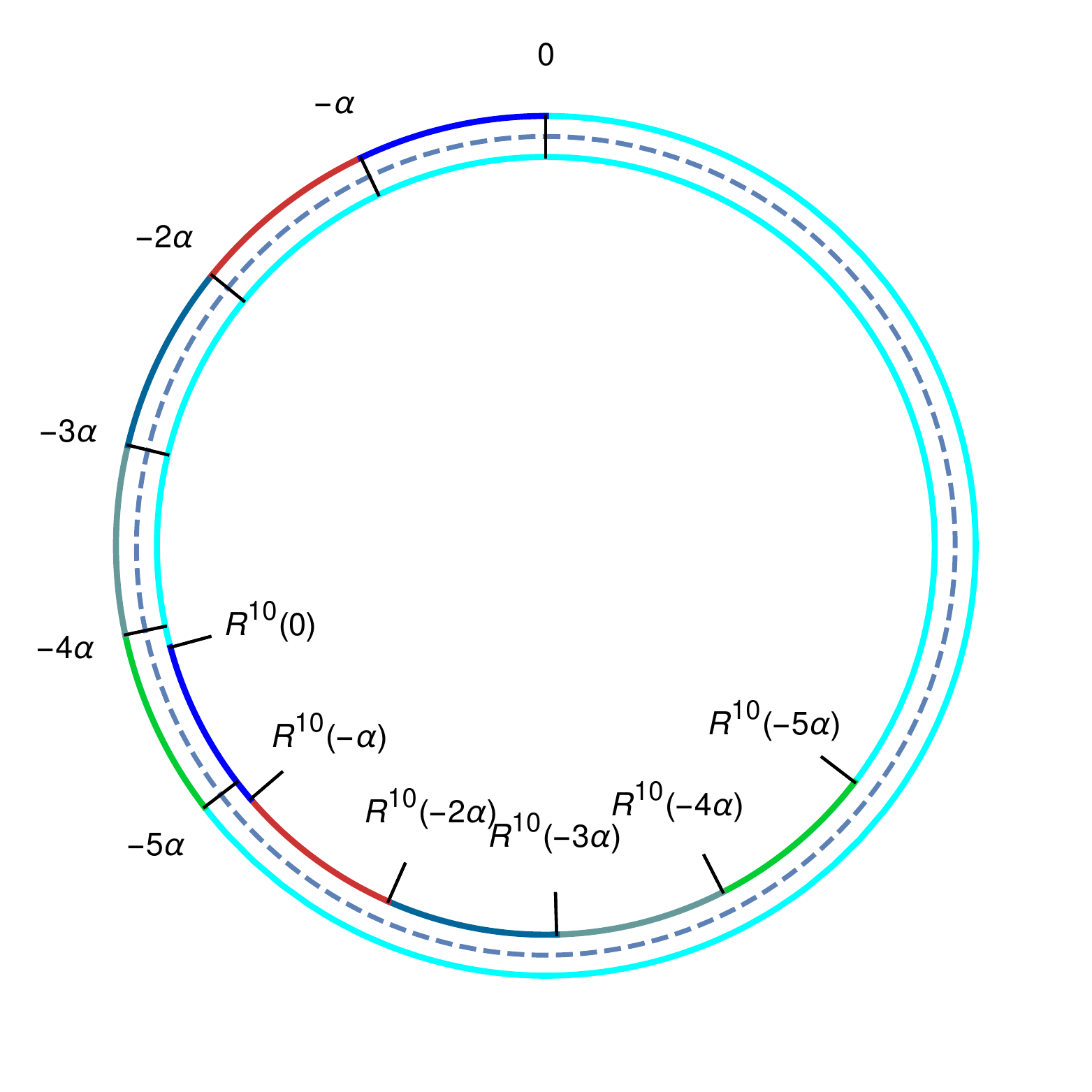}
    \caption{Constellations for $\alpha=(\pi-3)/2$, $\ell=5$, and $n = 5$, $21$, $10$.}
    \label{fig:pi}
  \end{figure}
\end{example}

\begin{remark}\label{rem:necessary}
It is possible that $\partial_{\sigma} = \partial_{\sigma'}$ for distinct maps $\sigma$, $\sigma' \in T_{\ell}$. Indeed,
for the Fibonacci word and $\ell=1$, we have equality for the identity mapping $\text{id}$ and $\sigma \colon (0,1) \mapsto (1,0)$;
in this case $\partial_{\text{id}} = \partial_{\sigma} = 
\{0,1\}\times \{0,1\}$.
So two constellations inducing different maps in $T_{\ell}$ lead to
the same set of boundary pairs. (See however \cref{lem:distinctSets}.)
\end{remark}

\begin{definition}
Let $\infw{r}$ be the rotation word defined by $\infw{r}[n] = \eta(R_{\alpha}^n(\alpha))$ for all $n\ge 0$, where $\eta \colon \mathbb{T} \to \{0,\ldots,\ell\}$ is defined by $\eta(x) = j$ when $x \in I_{w_j}$ (recall $I_{w_i}$ corresponds to the $i$th factor of length $\ell$).
\end{definition}
We have that $\infw{r}[n] = j$ if and only if the characteristic Sturmian word $\infw{s}_{\alpha,\alpha}$ has the length-$\ell$ factor $w_j$
occurring at position $n$.


\begin{proof}[Proof of \cref{the:sturmian}]
Notice that by definition, the word $\infw{r}$ defined in \cref{def:r} is obtained by a sliding block code of length $\ell$ of the characteristic Sturmian word $\infw{s}_{\alpha,\alpha}$. We show that $T\partial_{\infw{s},\ell}$ is obtained from
$\infw{r}$ by a sliding block code of length $\ell+1$. The claim then
follows since the composition of sliding block codes of length
$r$ and $r'$, respectively, is a sliding block code of length $r + r' - 1$.

Let $n > \ell$. Consider the factor of length $\ell+1$ of $\mathbf{r}$ occurring at position $m = n - \ell - 1 \geq 0$:
by definition we have $\mathbf{r}[m] \mathbf{r}[m+1] \cdots \mathbf{r}[m+\ell] = u_0u_1\cdots u_{\ell}$ if and only if
$u_{\ell - j} = \eta(R_{\alpha}^{m+ \ell - j}(\alpha))
	= \eta(R_{\alpha}^{m+\ell + 1}(\{-j\alpha\}))$,
	for each $j\in\{0,\ldots,\ell\}$.

This is equivalent to $R^{m+\ell + 1}(\{-j\alpha\}) \in I_{u_{\ell - j}}$ for each $j\in\{0,\ldots,\ell\}$. There thus exists a mapping
$\sigma \in T_{\ell}$ such that
$R_{\alpha}^{m+\ell + 1}(\{-j\alpha\}) \in I_{w_{\sigma(j)}}$, whence
$\partial_{\infw{s},\ell}[n]=\partial_{\infw{s},\ell}[m+\ell + 1] = \partial_{\sigma}$.
We conclude that the factor of length $\ell + 1$ appearing at position
$m$ in $\infw{r}$ determines the boundary set
$\partial_{\infw{s},\ell}[n]$. Letting the mapping
$\mathcal{B} \colon \{0,\ldots,\ell\}^{\ell + 1} \to T_{\ell}$ capture this relation, we may define an associated sliding block code $\mathfrak{B}$ of length $\ell + 1$ such that
$\mathfrak{B}(\infw{r}) = T\partial_{\infw{s},\ell}$.
\end{proof}

\begin{example}\label{exa:fib}
  We apply \cref{the:sturmian} to the Fibonacci word $\infw{f}$. Take $\alpha=(3-\sqrt{5})/2$, $\ell=1$, $I_0=[0,1-\alpha)$ and $I_1=[1-\alpha,1)$. Then the rotation word $\infw{r}$ associated with the partition
  $\{I_0,I_1\}$, slope $\alpha$, and intercept $\alpha$ is $\infw{s}_{\alpha,\alpha}$ by definition, which happens to be the Fibonacci word $\infw{f}$.
  We have
  $
  \mathbf{f}= 0\ 1\ 0\ 0\ 1\ 0\ 1\ 0\ 0\ 1\ 0\ 0\ 1\ 0\ 1\ 0\ 0\ 1\ 0\ 1\ 0\ 0\ 1\ 0\ 0\ 1\ 0\ 1\ 0\ 0\ 1 \cdots.
  $
Recall from the construction that the length-$2$ factors of the rotation
word determine the boundary sets. The three length-$2$ factors of $\infw{f}$ are $01$, $10$, and $00$ occurring at positions $m = 0$, $1$, and $2$,
respectively. We get the three maps $\sigma_{m+2}\in T_1$ defined by $(0,1) \mapsto (1,0)$,
$(0,1) \mapsto (0,1)$, and $(0,1) \mapsto (0,0)$, respectively. We deduce that an occurrence of $01$ or $10$ corresponds to the boundary set
$b := \{0,1\} \times \{0,1\}$, and $00$ to
$a := \{(0,0),(0,1),(1,0)\}$.
We may therefore define $\mathcal{B} \colon 01, 10 \mapsto b$, $00 \mapsto a$ and the associated sliding block code $\mathfrak{B}$ of length~$2$;
applying $\mathfrak{B}$ to $\infw{f}$, we get
\begin{align*}
& \mathfrak{B} ( (01)(10)(00)(01)(10)(01)(10)(00)(01)(10)(00)(01)(10)(01)(10)(00)(01)(10)(01) \cdots )\\ 
&= \hskip6pt  b \hskip14pt b \hskip13pt a \hskip13pt b \hskip13pt
b \hskip14pt b \hskip13pt b \hskip13pt a \hskip13pt b \hskip14pt b \hskip13pt a \hskip13pt b \hskip13pt b \hskip14pt b \hskip13pt b \hskip13pt 
  a \hskip13pt b \hskip14pt b \hskip14pt b \cdots, 
\end{align*}
  which indeed gives back \cref{exa:intro} after prepending the letter $a$.
\end{example}

We next discuss the first element
$\partial_{\mathbf{s},\ell}[\ell]$
of the (extended) boundary sequence.
Notice that the set is in one-to-one correspondence with the factors
of length $2\ell$, and thus has cardinality $2 \ell  + 1$.
The points $\{-j\alpha\}$ and $R_{\alpha}^{\ell}(\{-j\alpha\})$, $j \in\{0,\ldots,\ell\}$, on the torus
still determine the boundary set, but notice that there are only $2\ell + 1$
distinct pairs. The following proposition describes rather precisely
how the first element appears in the boundary sequence. 

\begin{proposition}
\label{prop:first-letter}
For a Sturmian word $\infw{s}$, the boundary set
$\partial_{\infw{s},\ell}[\ell]$ appears infinitely often in
$\partial_{\infw{s},\ell}$ if and only if
$0^{2\ell}$ or $1^{2\ell}$ appears in $\infw{s}$. Otherwise it appears exactly once. 
\end{proposition}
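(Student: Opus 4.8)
The plan is to first turn the ``once vs.\ infinitely often'' dichotomy into a single occurrence question, and then settle it by a counting argument on the circle using the machinery of \cref{the:sturmian}. Write $\infw{s}=\infw{s}_{\alpha,\rho}$ with $\alpha$ irrational, and set $\mathcal S:=\partial_{\infw{s},\ell}[\ell]$. By \cref{the:sturmian} the shifted sequence $T\partial_{\infw{s},\ell}$ is the image of the characteristic Sturmian word $\infw{s}_{\alpha,\alpha}$ under a sliding block code, hence uniformly recurrent (a sliding block code maps a uniformly recurrent word to a uniformly recurrent word). Since $\partial_{\infw{s},\ell}=\mathcal S\cdot T\partial_{\infw{s},\ell}$, the set $\mathcal S$ occurs infinitely often in $\partial_{\infw{s},\ell}$ as soon as it occurs at all in $T\partial_{\infw{s},\ell}$, and otherwise it occurs exactly once. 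So it suffices to prove: there is $n>\ell$ with $\partial_{\infw{s},\ell}[n]=\mathcal S$ if and only if $0^{2\ell}$ or $1^{2\ell}$ occurs in $\infw{s}$.

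For the counting I would work with the partition $\mathcal P=\{I_{w_0},\dots,I_{w_\ell}\}$ of $\mathbb T$ by the points $\{-k\alpha\}_{k=0}^{\ell}$, with lengths $\lambda_0,\dots,\lambda_\ell$ summing to $1$. The Claim preceding \cref{def:r} gives $(u,v)\in\partial_{\infw{s},\ell}[n]\iff R_\alpha^{n}(I_u)\cap I_v\neq\emptyset$, which for $n=\ell$ reads $(u,v)\in\mathcal S\iff uv\in\Fac_{2\ell}(\infw{s})$; as $w\mapsto(\pref_\ell(w),\suff_\ell(w))$ is injective on $\Fac_{2\ell}(\infw{s})$, we get $|\mathcal S|=2\ell+1$. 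For $n>\ell$ the $2\ell+2$ endpoints of $\mathcal P$ and of $R_\alpha^{n}\mathcal P$ (namely $\{-k\alpha\}$ and $\{(n-k)\alpha\}$, $0\le k\le\ell$) are pairwise distinct, so the common refinement has $2\ell+2$ atoms, each contained in a unique pair (rotated interval, interval); because the $I_{w_j}$ are half-open, every nonempty region $R_\alpha^{n}(I_{w_i})\cap I_{w_j}$ is a union of one or two atoms, whence $|\partial_{\infw{s},\ell}[n]|=2\ell+2-d_n$, where $d_n$ counts the disconnected regions. A region with interval-lengths $\lambda_i,\lambda_j$ can be disconnected only if $\lambda_i+\lambda_j>1$; since $\sum_j\lambda_j=1$ at most one length exceeds $1/2$, and if $\lambda_{i^*}=\lambda_{\max}>1/2$ is that one then the remaining lengths sum to $1-\lambda_{i^*}<1/2$, so only the region $R_\alpha^{n}(I_{w_{i^*}})\cap I_{w_{i^*}}$ can be disconnected; thus $d_n\in\{0,1\}$. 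In particular, if $\lambda_{\max}\le 1/2$ then $|\partial_{\infw{s},\ell}[n]|=2\ell+2\neq 2\ell+1=|\mathcal S|$ for every $n>\ell$.

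Next I would establish $\lambda_{\max}>1/2\iff 0^{2\ell}$ or $1^{2\ell}$ occurs in $\infw{s}$. If $u\in\Fac_\ell(\infw{s})$ contains both letters then $I_u\subseteq R_\alpha^{-a}(I_0)\cap R_\alpha^{-b}(I_1)$ for suitable $a,b$, so $|I_u|\le\min(\alpha,1-\alpha)<1/2$; hence $\lambda_{\max}>1/2$ forces the longest interval to be $I_{0^\ell}$ or $I_{1^\ell}$, say $I_{0^\ell}$, and since two circular arcs of length $>1/2$ must meet, $R_\alpha^{\ell}(I_{0^\ell})\cap I_{0^\ell}\neq\emptyset$, i.e.\ $0^{2\ell}\in\Fac(\infw{s})$. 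Conversely, assuming without loss of generality $\alpha<1/2$: the leftmost interval $I_{w_0}=[0,i_1)$ has length $i_1=1-\max_{1\le k\le\ell}\{k\alpha\}$, so $\lambda_{w_0}>1/2\iff\{k\alpha\}<1/2$ for all $k\le\ell\iff\ell\alpha<1/2$; on the other hand the maximal run of $0$'s in $\infw{s}$ has length $\lfloor 1/\alpha\rfloor$, so $0^{2\ell}\in\Fac(\infw{s})\iff 2\ell\le\lfloor 1/\alpha\rfloor\iff\alpha<1/(2\ell)$; these two conditions coincide, both force $0^\ell\in\Fac(\infw{s})$ (so $\lambda_{\max}=\lambda_{w_0}$), and $1^{2\ell}$ cannot occur since $11$ does not. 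Combined with the previous paragraph this already gives the ``only if'' direction of the proposition: if neither $0^{2\ell}$ nor $1^{2\ell}$ occurs then $\lambda_{\max}\le 1/2$, so $|\partial_{\infw{s},\ell}[n]|\neq|\mathcal S|$ for all $n>\ell$ and $\mathcal S$ occurs exactly once.

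For the ``if'' direction, suppose $0^{2\ell}$ occurs (the case $1^{2\ell}$ is symmetric under $0\leftrightarrow 1$, i.e.\ $\alpha\leftrightarrow 1-\alpha$); then $\alpha<1/(2\ell)$, $w_0=0^\ell$, $I_{w_0}=[0,1-\ell\alpha)$ and $R_\alpha^{\ell}(I_{w_0})=[\ell\alpha,1)$. A direct check shows that for every $\theta\in(0,\min(\ell\alpha,\,1-2\ell\alpha))$ the set $(R_\theta\circ R_\alpha^{\ell})(I_{w_0})\cap I_{w_0}$ has two components, so by the counting above $|\partial_{\infw{s},\ell}[n]|=2\ell+1$ for any $n>\ell$ with $(n-\ell)\alpha\equiv\theta\pmod 1$. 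On the other hand, half-openness makes every nonempty region $R_\alpha^{\ell}(I_u)\cap I_v$ an arc of positive length $\ge\delta_0>0$ (a minimum over finitely many pairs), and a rotation by $0<\theta<\delta_0$ cannot separate such an arc from $I_v$, so $\mathcal S\subseteq\partial_{\infw{s},\ell}[n]$ whenever $0<(n-\ell)\alpha\bmod 1<\delta_0$. Choosing, by density of $(m\alpha\bmod 1)_{m\ge 1}$, an $n>\ell$ with $(n-\ell)\alpha\bmod 1\in(0,\min(\delta_0,\ell\alpha,1-2\ell\alpha))$ then yields $\mathcal S\subseteq\partial_{\infw{s},\ell}[n]$ and $|\partial_{\infw{s},\ell}[n]|=2\ell+1=|\mathcal S|$, hence $\partial_{\infw{s},\ell}[n]=\mathcal S$. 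The main obstacle I expect is the bookkeeping in the third paragraph — the clean identification of $\lambda_{\max}>1/2$ with the run condition, including the maximal-run-length fact and keeping the $\alpha<1/2$ and $\alpha>1/2$ cases parallel — together with making sure the atom count in the second paragraph is watertight (in particular that half-open circular intervals never intersect in a single point, which is exactly what the last paragraph relies on).
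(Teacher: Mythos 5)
Your proof is correct and takes essentially the same route as the paper: both arguments hinge on the cardinality dichotomy $|\partial_{\infw{s},\ell}[n]|\in\{2\ell+1,2\ell+2\}$ for $n>\ell$, governed by whether some interval $I_{w_j}$ (necessarily $I_{0^\ell}$ or $I_{1^\ell}$) has length exceeding $1/2$ — equivalently $\ell\|\alpha\|<1/2$, equivalently $0^{2\ell}$ or $1^{2\ell}\in\Fac(\infw{s})$ — via counting connected components of the sets $R_\alpha^n(I_u)\cap I_v$. Your version merely packages the component count more systematically (atoms of the common refinement) and adds an upfront reduction through uniform recurrence of $T\partial_{\infw{s},\ell}$, where the paper instead exhibits the recurring configuration explicitly.
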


In what follows, for $x \in \mathbb{T}$, we define $\| x \| = \min\{x,1-x\}$, whence $\|x\| < 1/2$ for irrational~$x$. It is not hard to show that $0^k$ or $1^k$ appears in a Sturmian word of slope $\alpha$ if and only if $k\|\alpha\| < 1$.

\begin{proof}[Proof of \cref{prop:first-letter}]
Assume first that $\ell \|\alpha\| > 1/2$. Consider the set $R_{\alpha}^n(I_u) \cap I_v$ for some length-$\ell$ factors $u$, $v$, and $n\geq \ell$.
We claim that it is an interval whenever it is non-empty. If it is not, then the intersection is a union of two intervals: without loss of generality $|I_u| > 1-|I_v|$, and $R_{\alpha}^n(I_u)$ intersects $I_v$ from both ends, but does not contain $I_v$ entirely. Notice that the intervals corresponding to length-$\ell$ factors have length at most $\|\alpha\|$ whenever $\ell \|\alpha\| > 1$. Since in that case we get the contradiction
$|I_u| > 1 - |I_v| \geq 1 - \|\alpha\| > \|\alpha\|$,
we must have $\ell \|\alpha \| < 1$. But now we know that the intervals have two admissible lengths,
namely $\| \alpha \|$ and $1 - \ell\|\alpha\|$ (compare to the non-rotated points in \cref{fig:pi} for an illustration). Now if $1-\ell|\alpha\|$
is the largest of the two, we have a contradiction
$|I_u| > 1-|I_v| = 1-\|\alpha\| \geq 1 - \ell\|\alpha\| = |I_u|$. Conversely, we get the contradiction
$|I_u| > 1-|I_v| = 1-(1-\ell \|\alpha \|) = \ell \|\alpha\| > 1/2 > \|\alpha\| = |I_u|$. We conclude that for any length-$\ell$ factors $u$, $v$ of $\infw{s}$, the set $R_{\alpha}^n(I_u) \cap I_v$ is an interval or is empty. This implies that the boundary set $\partial_{\infw{x},\ell}[n]$ contains $2\ell + 2$ elements whenever $n > \ell$. Thus $\partial_{\infw{x},\ell}[\ell]$ occurs only once in the $\ell$-boundary sequence due to a cardinality argument.

Assume then that $\ell \|\alpha \| < 1/2$. Without loss of generality
we can assume $\alpha < 1/2$. It is straightforward to verify that
$\partial_{\infw{x},\ell}[\ell] = \{(0^i10^{\ell - i - 1},0^{\ell})\}_{i = 0}^{\ell - 1} \cup \{(0^{\ell},0^i 1 0^{\ell - i - 1})\}_{i = 0}^{\ell - 1} \cup \{ (0^\ell,0^\ell) \}$.
See, for instance, the first picture in \cref{fig:pi}.
This same set is obtained for those $n$ for which $0 < R_{\alpha}^n(\{-\ell\alpha\}) <  R_{\alpha}^n(0) < 1- \ell \|\alpha\|$:
two of the  $2\ell + 2$ intervals correspond to the boundary pair
$(0^\ell,0^\ell)$, namely the intervals $[0,R_{\alpha}^n(\{-\ell\alpha\})$ and $[R_{\alpha}^n(0), 1- \ell \|\alpha\|)$.
Again see \cref{fig:pi} for an illustration: $n=21$ in the second picture satisfies the previous condition while $n=10$ in the third picture does not.
\end{proof}

Notice that either $00$ or $11$ appears in a Sturmian word $\infw{s}$, so the above implies that the
first letter of the ($1$-)boundary sequence~$\partial_{\infw{s}}$
always appears infinitely often in the sequence.
Returning to \cref{exa:intro}, since $0^4$ does not appear in the Fibonacci word, the letter $a_0$ appears only once in $\partial_{\infw{f},2}$. 

We conclude with the immediate corollary of \cref{the:sturmian,prop:first-letter}; here we say that a word
$\infw{w}$ is \emph{uniformly recurrent} if each of its factors occurs infinitely often within bounded gaps (the distance between two consecutive occurrences depends on the factor). It is known that, e.g., Sturmian words are uniformly recurrent.

\begin{corollary}
For any Sturmian word $\infw{s}$, the shifted sequence $T\partial_{\infw{s},\ell}[n]$ is uniformly recurrent. The sequence $\partial_{\infw{s},\ell}$ is uniformly recurrent if and only if $0^{2\ell}$ or $1^{2\ell}$ appears in $\infw{s}$.
\end{corollary}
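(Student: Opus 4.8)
The corollary combines \cref{the:sturmian} and \cref{prop:first-letter}, so the plan is to separate the "shifted" claim from the "unshifted" claim. For the shifted sequence, \cref{the:sturmian} tells us that $T\partial_{\infw{s},\ell}$ is the image of the characteristic Sturmian word $\infw{s}_{\alpha,\alpha}$ under a sliding block code $\mathfrak{B}$ of length $2\ell$. Since $\infw{s}_{\alpha,\alpha}$ is uniformly recurrent (being Sturmian), I first want to observe that uniform recurrence is preserved under sliding block codes. This is a standard fact: if $w$ is a factor of $\mathfrak{B}(\infw{x})$ of length $m$, then any occurrence of $w$ in $\mathfrak{B}(\infw{x})$ at position $i$ is "witnessed" by the factor $\infw{x}[i\mathbin{..}i+m+r-2]$ of length $m+r-1$ in $\infw{x}$ (where $r$ is the block length); conversely every occurrence of that length-$(m+r-1)$ factor of $\infw{x}$ produces an occurrence of $w$. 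Hence the gaps between consecutive occurrences of $w$ in $\mathfrak{B}(\infw{x})$ are bounded by the gaps between consecutive occurrences of the corresponding length-$(m+r-1)$ factors of $\infw{x}$, which are bounded since $\infw{x}$ is uniformly recurrent. (One has to be slightly careful: a single occurrence of $w$ may be witnessed by several distinct length-$(m+r-1)$ factors; one takes the maximum of the relevant recurrence bounds over this finite set.) Applying this to $\infw{x} = \infw{s}_{\alpha,\alpha}$ and the code $\mathfrak{B}$ of length $2\ell$ gives that $T\partial_{\infw{s},\ell}$ is uniformly recurrent.

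For the second statement, the point is that $\partial_{\infw{s},\ell}$ differs from $T\partial_{\infw{s},\ell}$ only in the single prepended first letter $\partial_{\infw{s},\ell}[\ell]$. If $0^{2\ell}$ or $1^{2\ell}$ appears in $\infw{s}$, then by \cref{prop:first-letter} the letter $\partial_{\infw{s},\ell}[\ell]$ occurs infinitely often in $\partial_{\infw{s},\ell}$; since the tail $T\partial_{\infw{s},\ell}$ is uniformly recurrent and prepending finitely many letters (here a single letter that moreover recurs in the tail) does not destroy uniform recurrence, $\partial_{\infw{s},\ell}$ is uniformly recurrent. Concretely: for a factor $u$ of $\partial_{\infw{s},\ell}$, if $u$ does not use position $\ell$ it is a factor of $T\partial_{\infw{s},\ell}$ and recurs with bounded gaps there; if $u$ does involve position $\ell$, then since $\partial_{\infw{s},\ell}[\ell]$ occurs infinitely often in the tail, $u$ itself occurs infinitely often, and a short argument (or appeal to the fact that a word is uniformly recurrent iff every suffix is, together with closure under prepending a letter that recurs) gives bounded gaps. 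Conversely, if neither $0^{2\ell}$ nor $1^{2\ell}$ appears in $\infw{s}$, then \cref{prop:first-letter} says $\partial_{\infw{s},\ell}[\ell]$ appears exactly once, so this letter occurs in $\partial_{\infw{s},\ell}$ with unbounded gaps (there is no second occurrence at all), and hence $\partial_{\infw{s},\ell}$ is not uniformly recurrent.

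I do not anticipate a serious obstacle here; the only point requiring a little care is the bookkeeping in the "sliding block codes preserve uniform recurrence" lemma — specifically handling the fact that the preimage of a factor under $\mathfrak{B}$ is a finite set of factors rather than a single one, so that one takes a maximum of recurrence bounds over this finite set, and that occurrences near the start of the word do not cause trouble (they don't, since uniform recurrence only constrains gaps between consecutive occurrences once infinitely many exist). A clean way to present the whole corollary is: (i) state and prove the sliding-block-code lemma; (ii) deduce uniform recurrence of $T\partial_{\infw{s},\ell}$ from \cref{the:sturmian} and uniform recurrence of $\infw{s}_{\alpha,\alpha}$; (iii) split on whether $\partial_{\infw{s},\ell}[\ell]$ recurs, using \cref{prop:first-letter}, and note that prepending a single letter to a uniformly recurrent word yields a uniformly recurrent word precisely when that letter occurs in the word (infinitely often).
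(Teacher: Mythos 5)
Your proof of the first claim is correct and is exactly what the paper has in mind (the paper states the corollary as ``immediate''): by \cref{the:sturmian}, $T\partial_{\infw{s},\ell}=\mathfrak{B}(\infw{s}_{\alpha,\alpha})$ for a sliding block code $\mathfrak{B}$, Sturmian words are uniformly recurrent, and uniform recurrence is preserved by sliding block codes. The ``only if'' direction of the second claim is also fine. The gap is in the ``if'' direction. You reduce it to the principle that prepending to a uniformly recurrent word a letter that occurs infinitely often in it again yields a uniformly recurrent word. That principle is false: take $\infw{y}=(012)^{\omega}$ and prepend $0$; the letter $0$ recurs in $\infw{y}$, yet $0\infw{y}=0012012\cdots$ contains the factor $00$ exactly once, so it is not even recurrent. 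The same objection hits your sentence ``since $\partial_{\infw{s},\ell}[\ell]$ occurs infinitely often in the tail, $u$ itself occurs infinitely often'': \cref{prop:first-letter} only gives recurrence of the single \emph{letter} $\partial_{\infw{s},\ell}[\ell]$, whereas uniform recurrence of $\partial_{\infw{s},\ell}$ requires that every \emph{prefix} $\partial_{\infw{s},\ell}[\ell]\cdots\partial_{\infw{s},\ell}[\ell+k]$ recurs, and different occurrences of the first letter need not be followed by the same continuation.

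What is actually needed is that every prefix of $\partial_{\infw{s},\ell}$ occurs somewhere in $T\partial_{\infw{s},\ell}$; it is then a factor of a uniformly recurrent word and recurs with bounded gaps by your first part. This does hold, but one must return to the constellation picture rather than argue purely combinatorially. When $0^{2\ell}$ or $1^{2\ell}$ occurs, i.e.\ $\ell\|\alpha\|<1/2$, the proof of \cref{prop:first-letter} shows that $\partial_{\infw{s},\ell}[n]=\partial_{\infw{s},\ell}[\ell]$ precisely when $0<R_{\alpha}^{n}(\{-\ell\alpha\})<R_{\alpha}^{n}(0)<1-\ell\|\alpha\|$, i.e.\ when $R_{\alpha}^{n}(0)$ lies in an interval having $R_{\alpha}^{\ell}(0)=\{\ell\alpha\}$ as an endpoint; for $i\geq 1$ the letter $\partial_{\infw{s},\ell}[\ell+i]$ is determined by $R_{\alpha}^{\ell+i}(0)$ lying in the \emph{interior} of a finite union of intervals (all rotated endpoints are interior points for indices exceeding $\ell$). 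Hence the set of $n$ reproducing the whole prefix $\partial_{\infw{s},\ell}[\ell]\cdots\partial_{\infw{s},\ell}[\ell+k]$ contains $\{n : R_{\alpha}^{n}(0)\in O\}$ for some nonempty open $O$ (a one-sided neighbourhood of $\{\ell\alpha\}$), and by minimality of the irrational rotation this set is nonempty with bounded gaps. With this supplement your argument is complete; without it, the reduction to ``the first letter recurs'' does not suffice.
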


\subsection{Another description of the extended boundary sequence}\label{subsec:second}

We give another description of the $\ell$-boundary sequences of Sturmian words when $\ell \geq 2$.
For any irrational number $\alpha \in (0,1)$ there is a unique infinite continued fraction expansion
\[
 \alpha = [0; a_1 , a_2, a_3, \ldots] := \dfrac{1}{a_1 + \dfrac{1}{a_2 + \dfrac{1}{a_3 + \ldots}}},
\]
where $a_n \geq 1$ are integers for all $n\geq 1$. Then the
characteristic Sturmian word $\infw{s}_{\alpha,\alpha}$ of slope
$\alpha$ equals $\lim_{k\to\infty} S_k$,
where $S_{-1} = 1$, $S_0 = 0$, $S_1 = S_0^{a_1-1}S_{-1}$, and $S_{k+1} = S_{k}^{a_{k+1}}S_{k-1}$ for all $k\geq 1$ \cite[Chap.~9]{AS}.
The main result of this part is the following.

\begin{proposition}
\label{prop:another-charact}
Let $\infw{s}$ be a Sturmian word of slope $\alpha = [0;a_1+1,a_2,\ldots]$.
For each $\ell \geq 2$, there exists $k_{\ell} \in \N$ such that for any $k \geq k_{\ell}$ there is a morphism $h_{k,\ell}$ such
that $T\partial_{\mathbf{s},\ell} = h_{k,\ell}(\infw{s}_{\beta_k,\beta_k})$, where
$\beta_k = [0;a_{k+1}+1,a_{k+2},\ldots]$.
\end{proposition}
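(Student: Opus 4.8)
The plan is to combine Theorem~\ref{the:sturmian} with a structural fact about Sturmian words: the characteristic word $\infw{s}_{\alpha,\alpha}$ is itself the image, under a suitable morphism built from the first $k$ partial quotients, of the characteristic word $\infw{s}_{\beta_k,\beta_k}$ of the shifted slope $\beta_k=[0;a_{k+1}+1,a_{k+2},\ldots]$. Concretely, writing $\alpha=[0;a_1+1,a_2,\ldots]$, the standard morphisms $L\colon 0\mapsto 0,\ 1\mapsto 10$ and $R\colon 0\mapsto 01,\ 1\mapsto 1$ (or the variant $0\mapsto 0,1\mapsto 01$), composed according to the partial quotients $a_1,\ldots,a_k$, yield a morphism $\mu_{k}$ with $\infw{s}_{\alpha,\alpha}=\mu_{k}(\infw{s}_{\beta_k,\beta_k})$; this is the classical ``$S$-adic'' description of Sturmian words recalled via the sequence $S_k$ in the excerpt. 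First I would make this identity precise, fixing the exact composition (and the off-by-one bookkeeping caused by writing the slope as $[0;a_1+1,\ldots]$ rather than $[0;a_1,\ldots]$, which is exactly why the hypothesis reads $a_1+1$).

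Next I would invoke Theorem~\ref{the:sturmian}: $T\partial_{\infw{s},\ell}=\mathfrak{B}(\infw{s}_{\alpha,\alpha})$ for a sliding block code $\mathfrak{B}$ of length $2\ell$, say $\mathfrak{B}(\infw{x})[n]=\mathcal{B}(\infw{x}[n,n+2\ell-1])$. Substituting the $S$-adic identity gives $T\partial_{\infw{s},\ell}=\mathfrak{B}(\mu_k(\infw{s}_{\beta_k,\beta_k}))$. The heart of the argument is then: \emph{the composition of a sliding block code with a nonerasing morphism is again a morphic image}, provided the morphism's images are long enough relative to the window. Since $\mu_k$ is a nonerasing morphism and $|\mu_k(0)|,|\mu_k(1)|\to\infty$ as $k\to\infty$ (the Sturmian standard words $S_k$ grow), there is $k_\ell$ such that for $k\ge k_\ell$ both $|\mu_k(0)|$ and $|\mu_k(1)|$ exceed $2\ell$. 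For such $k$, I would define $h_{k,\ell}$ on a letter $c\in\{0,1\}$ by: read the length-$(2\ell)$ windows of $\infw{s}_{\alpha,\alpha}$ whose starting position falls inside the block $\mu_k(c)$; for a letter $c$ occurring at position $m$ in $\infw{s}_{\beta_k,\beta_k}$, the windows starting at the $|\mu_k(c)|$ positions within $\mu_k(c)$ only ever read letters of $\mu_k(c)$ and $\mu_k(\infw{s}_{\beta_k,\beta_k}[m{+}1,\ldots])$, and since $|\mu_k(c)|>2\ell$ each such window is entirely determined by $c$ together with the letter $\infw{s}_{\beta_k,\beta_k}[m{+}1]$ following it --- in fact, because $\mu_k(c)$ already has length $>2\ell$, only $c$ and the first letter of the next block are needed, and the windows overlapping into the next block use only the prefix of length $<2\ell$ of $\mu_k(\infw{s}_{\beta_k,\beta_k}[m{+}1])$, which in turn is a prefix of $\mu_k(0)$ or $\mu_k(1)$. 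To make $h_{k,\ell}$ a genuine morphism of $\infw{s}_{\beta_k,\beta_k}$ (rather than a sliding block code of window $2$), I would use the recurrence/closure structure of Sturmian words: in $\infw{s}_{\beta_k,\beta_k}$ the letter $1$ is always followed by $0$ (for the appropriate normalization), so the ``next letter'' after a given occurrence of $c$ is either determined outright or determined by $c$ alone, and hence $h_{k,\ell}(c):=\mathcal{B}\text{-image of the }2\ell\text{ windows anchored in }\mu_k(c)$, read using $\mu_k(c)$ followed by the forced continuation, is well defined. One then checks $h_{k,\ell}(\infw{s}_{\beta_k,\beta_k})=\mathfrak{B}(\mu_k(\infw{s}_{\beta_k,\beta_k}))=T\partial_{\infw{s},\ell}$ by comparing the two words position by position, partitioning positions of $\mathfrak{B}(\mu_k(\infw{s}_{\beta_k,\beta_k}))$ according to which block $\mu_k(\cdot)$ the window's left endpoint lies in.

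The main obstacle I expect is the last well-definedness point: the crude composition of a length-$(2\ell)$ sliding block code with $\mu_k$ is naturally a sliding block code of \emph{window $2$} on $\infw{s}_{\beta_k,\beta_k}$ (we need to know the letter after $c$), not literally a morphism. Upgrading window $2$ to a morphism requires exploiting that in the characteristic Sturmian word the only ``bad'' local configuration ($1$ followed by $1$, when $a_{k+1}+1\ge 3$, does occur) must be handled --- so one cannot always read off the successor of $1$ from $1$ alone. The clean fix is to note that $a_{k+1}\ge 1$ forces, after renormalizing, that one of the two letters always has a unique successor, and to absorb the ambiguity by instead factoring $\mu_k=\mu_{k-1}\circ\nu_{a_{k}}$ through one more elementary step, or equivalently by choosing $k$ so that the relevant desubstitution is of the ``$1\mapsto 10$'' type; alternatively, one passes to the $2$-block presentation of $\infw{s}_{\beta_k,\beta_k}$, which is again a characteristic Sturmian word of the same slope up to a morphism, and absorbs that extra morphism into $h_{k,\ell}$. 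Getting this bookkeeping exactly right --- and matching it with the ``$+1$'' shifts in the continued fraction hypothesis --- is the delicate part; everything else is routine once $k_\ell$ is chosen large enough that $\min(|\mu_k(0)|,|\mu_k(1)|)>2\ell$.
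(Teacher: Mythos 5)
Your overall route is the same as the paper's: write $\infw{s}_{\alpha,\alpha}=g(\infw{s}_{\beta_k,\beta_k})$ with $g\colon 0\mapsto S_k$, $1\mapsto S_{k-1}$, take $k$ large enough relative to $\ell$, and push the length-$2\ell$ sliding block code $\mathfrak{B}$ of \cref{the:sturmian} through $g$. The genuine gap is exactly at the point you flag as the ``main obstacle'': well-definedness of $h_{k,\ell}$ on the letter $0$. The windows anchored near the right end of a block $g(0)=S_k$ spill into the next block, and the successor of $0$ in $\infw{s}_{\beta_k,\beta_k}$ is \emph{not} determined (only $1$ has a forced successor), so your ``determined outright or determined by $c$ alone'' claim fails for $c=0$, and the fallback fixes you propose do not repair it: passing to the $2$-block presentation of $\infw{s}_{\beta_k,\beta_k}$ yields a word over three letters which is not a characteristic Sturmian word, and in any case the statement requires the preimage to be $\infw{s}_{\beta_k,\beta_k}$ itself, so absorbing an extra coding there just relocates the same window-$2$ problem; factoring $g$ through one more elementary substitution likewise does not remove the ambiguity after a $0$-block.

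The missing idea, which is the heart of the paper's proof, is that the window-$2$ obstruction is illusory: the spill-over is always the \emph{same} word regardless of which block follows. Concretely, set $X:=\pref_{2\ell-1}(S_kS_{k-1})$. Since $S_kS_{k-1}$ and $S_{k-1}S_k$ differ only in their last two letters and $|S_kS_{k-1}|\ge 2\ell+1$, one also has $X=\pref_{2\ell-1}(S_{k-1}S_k)$, and moreover $\pref_{2\ell-1}(S_kX)=X=\pref_{2\ell-1}(S_{k-1}X)$. Hence every tail of $\infw{s}_{\alpha,\alpha}$ beginning at a block boundary starts with $X$, so $h\colon 0\mapsto\mathfrak{B}(S_kX)$, $1\mapsto\mathfrak{B}(S_{k-1}X)$ is a well-defined morphism on single letters, and the identity $\mathfrak{B}(uv)=\mathfrak{B}(u\pref_{2\ell-1}(v))\,\mathfrak{B}(v)$ telescopes along the block decomposition to give $h(\infw{s}_{\beta_k,\beta_k})=\mathfrak{B}(\infw{s}_{\alpha,\alpha})=T\partial_{\infw{s},\ell}$. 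Without this common-prefix observation (or an equivalent), your construction does not produce a morphism, so as written the proof is incomplete. (A minor further remark: you require $\min(|S_k|,|S_{k-1}|)>2\ell$, whereas $|S_kS_{k-1}|\ge 2\ell+1$ suffices once the $X$-argument is in place.)
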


\begin{proof}
Let $(S_j)_{j\geq -1}$ be the sequence associated to
the slope $\alpha$. Let then $k$ be an integer such that
$|S_{k}S_{k-1}| \geq 2\ell+1$.
Hence $\infw{s}_{\alpha,\alpha}$ is a product
of $S_k$ and $S_{k-1}$. It is now evident that with
$\beta = [0;a_{k+1}+1,a_{k+2},\ldots]$, we have that
$g(\infw{s}_{\beta,\beta}) = \infw{s}_{\alpha,\alpha}$,
where $g$ is defined by
$g\colon 0\mapsto S_k$, $1\mapsto S_{k-1}$.
 Let
$X = \pref_{2\ell-1}(S_{k}S_{k-1})$. We also have that
$X = \pref_{2\ell-1}(S_{k-1}S_{k})$, as $S_kS_{k-1}$ and
$S_{k-1}S_k$ are known to differ in only the last two
letters~\cite[Thm.~9.1.11]{AS}. We have that
$X$ is a prefix of both $S_kX$ and $S_{k-1}X$:
\[
\pref_{2\ell -1}(S_kX) = \pref_{2\ell - 1}(S_k\pref_{2\ell-1}(S_{k-1}S_{k})) = \pref_{2\ell-1}(S_kS_{k-1}) = X
\]
and
\[
\pref_{2\ell -1}(S_{k-1}X)
	= \pref_{2\ell - 1}(S_{k-1}\pref_{2\ell-1}(S_{k}S_{k-1}))
	= \pref_{2\ell-1}(S_{k-1}S_{k})
	= X.
\]

We define: $h\colon 0 \mapsto \mathfrak{B}(S_k X)$,
$1 \mapsto \mathfrak{B}(S_{k-1}X)$, where $\mathfrak{B}$ is the
sliding block code of length $2\ell$ from \cref{the:sturmian}
such that $\mathfrak{B}(\infw{s}_{\alpha,\alpha}) = T\partial_{\infw{s},\ell}$ (and $T$ is the shift operator). Notice now that $ \mathfrak{B}(uv) = \mathfrak{B}(u\pref_{2\ell-1}(v))\mathfrak{B}(v)$ for any sufficiently long word $v$ (and $u$ non-empty). Therefore
\begin{align*}
h(\infw{s}_{\beta,\beta})
&=
\mathfrak{B}(S_k X)^{a_{k+1}}
\mathfrak{B}(S_{k-1} X)
\mathfrak{B}(S_k X)^{a_{k+1}}
\cdots\\
&=
\mathfrak{B}(S_k^{a_{k+1}}S_{k-1}S_k^{a_{k+1}}\cdots)
= \mathfrak{B}(\infw{s}_{\alpha,\alpha}) = T\partial_{\infw{s},\ell}.
\end{align*}
\end{proof}

We illustrate the above construction with a couple of examples for the benefit of the interested reader.

\begin{example}
\label{ex:morphicImage}
Take the characteristic Sturmian word $\infw{s}$ of slope
$\alpha = 1 - 1/\sqrt{3}$. The continued fraction expansion
of $\alpha$ is $[0;2,2,1,2,1,2,1,2,1,\ldots]$. Let $\ell = 2$. Then we
have $S_{-1} = 1$, $S_0 = 0$, $S_1 = 01$, $S_2 = 01010$, $S_3 = 0101001$, \ldots.
Here $|S_2S_1| = 7 \geq 5 = 2\ell + 1$. We have $X = \pref_{3}(S_2S_1) = 010$.
Then, defining $h\colon 0 \mapsto \mathfrak{B}(01010X) = 01234$, $1 \mapsto \mathfrak{B}(01X) = 01$,
we find $T\partial_{\infw{s},\ell} = h(\infw{s})$ (see \cref{lem:distinctSets}).
Similarly, for $\ell = 3$ we have $|S_2S_1| = 2\ell + 1$. Then $T\partial_{\infw{s},\ell} = h(\infw{s})$ when $h$ is defined by
$0 \mapsto \mathfrak{B}(01010X) = 01234$ and $1 \mapsto \mathfrak{B}(01X) = 56$, where $X = \pref_{5}(S_2S_1) = S_2 = 01010$.
Let then finally $\ell = 4$. Now we have
$|S_2S_1| = 7 < 9 = 2\ell+1$, but $|S_3S_2| = 12$. Hence
$X = 0101001 = S_3$, and
$T\partial_{\infw{s},\ell} = h(\infw{s}')$, where $h$ is the
morphism defined by
$0 \mapsto \mathfrak{B}(0101001X) = 0123456$ and
$1 \mapsto \mathfrak{B}(01010X) = 01278$,
and $\infw{s}'$ is the characteristic Sturmian word whose slope $\beta$ has continued fraction expansion
$[0;3,1,2,1,2,1,2,1,2,\ldots]$. One can verify that
$\beta = 2-\sqrt{3}$.
\end{example}

\begin{example}
\label{ex:morphicImagePi}
The continued fraction expansion of
$\alpha = \frac{1}{2}(\pi-3)$
begins with
\[
[0;14,7,1,586,3,1,2,1,1,\ldots].
\]
The construction in \cref{prop:another-charact} hence gives
$\partial_{\infw{s}_{\alpha,\alpha},2} = h(\infw{s}_{\beta,\beta})$, where $\beta = \frac{2}{\pi-3} - 14$
has continued fraction expansion $[0;7,1,586,3,1,2,1,1,\ldots]$, and $h$ is defined by $0\mapsto 0^{10}1234$, $1 \mapsto 0$.
\end{example}

\begin{example}
Take the slope $\alpha=(3-\sqrt{5})/2$; its continued fraction expansion is $[0;2,1,1,1,\ldots]$.
Using the previous notation, $S_{-1} = 1$, $S_0 = 0$, and $S_{k+1} = S_k S_{k-1}$ for all  $k\geq 0$.
Then the sequence $(S_k)_{k\geq 0}$ converges to the Fibonacci word; the first few words in the sequence $(S_k)_{k\ge 0}$ are $0, 01, 010, 01001, 01001010$.

Now for any $\ell \geq 2$, the above proposition thus gives that
$\partial_{\infw{f},\ell}$ is the morphic image of the characteristic
Sturmian word of slope
$\beta = \alpha$. In other words, the $\ell$-boundary sequence is
always a morphic image of $\infw{f}$.
\end{example}

We generalize the last observation made in the above example.
\begin{corollary}
\label{cor:morphicSturmian} 
  Let $\mathbf{s}$ be a Sturmian word with quadratic slope.
  Then $\partial_{\mathbf{s},\ell}$ is morphic. In particular,
  the $\ell$-boundary sequence of a Sturmian word fixed by a non-trivial morphism is morphic.
\end{corollary}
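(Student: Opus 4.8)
The plan is to combine \cref{prop:another-charact} with the classical fact that a Sturmian word with quadratic irrational slope is (essentially) the morphic image of a fixed point of a morphism, so that composing morphisms yields morphicity of the boundary sequence. First I would recall that if $\alpha$ is a quadratic irrational, then its continued fraction expansion $[0;a_1+1,a_2,a_3,\ldots]$ is eventually periodic by Lagrange's theorem; write the eventual period as having length $p$. Then the sequence of slopes $\beta_k = [0;a_{k+1}+1,a_{k+2},\ldots]$ produced in \cref{prop:another-charact} is eventually periodic in $k$ with period $p$: there is some $K$ such that $\beta_{k+p} = \beta_k$ for all $k \geq K$. In particular, choosing $k \geq \max\{k_\ell, K\}$ with the additional property that $k \equiv k' \pmod p$ for a suitable $k'$, we may assume $\beta := \beta_k$ satisfies $\beta = [0; b_1+1, b_2, \ldots]$ with $(b_i)$ purely periodic of period $p$ (after possibly passing from $\alpha$ to $\beta_k$, which only changes the target word and the morphism $h_{k,\ell}$, not morphicity).

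Next I would invoke the standard result (see \cite[Chap.~9]{AS}, and cf. the recursion $S_{-1}=1$, $S_0=0$, $S_{k+1}=S_k^{a_{k+1}}S_{k-1}$ used in \cref{prop:another-charact}) that a characteristic Sturmian word whose slope has a purely periodic continued fraction expansion of period $p$ is a fixed point of the primitive morphism $\psi$ obtained by composing the $p$ elementary substitutions $0\mapsto 0^{b_i-1}?,\ 1\mapsto\cdots$ corresponding to one period — more precisely, the characteristic Sturmian word $\infw{s}_{\beta,\beta}$ is a fixed point of a non-trivial morphism, hence it is a pure morphic word. Then by \cref{prop:another-charact}, $T\partial_{\infw{s},\ell} = h_{k,\ell}(\infw{s}_{\beta,\beta}) = h_{k,\ell}(\psi^\omega(0))$, which exhibits $T\partial_{\infw{s},\ell}$ as a morphic word (image of a pure morphic word under a coding-like morphism; one can absorb $h_{k,\ell}$ into the generating morphism by a standard construction, or simply note that the morphic words are closed under images by arbitrary morphisms). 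Finally, prepending the single letter $\partial_{\infw{s},\ell}[\ell]$ to get $\partial_{\infw{s},\ell}$ itself preserves morphicity: adding one letter at the front of a morphic word yields a morphic word (introduce a new initial letter mapping to that letter followed by $\psi(0)$ in the standard way). This gives the first sentence of the corollary.

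For the second sentence, suppose $\infw{s}$ is a Sturmian word fixed by a non-trivial morphism. Then it is well known (by a theorem going back to Crisp--Moran--Pollington--Shiue, or via the characterization of fixed points of Sturmian morphisms) that the slope $\alpha$ of $\infw{s}$ is a quadratic irrational; alternatively, one may cite that the set of factors, hence the slope, is preserved, and that a fixed point of a primitive morphism has a slope with eventually periodic continued fraction expansion. Once $\alpha$ is known to be quadratic, the second sentence is a special case of the first. I would phrase this as: "A Sturmian word fixed by a non-trivial morphism has a quadratic slope \cite[...]{...}, so the claim follows from the first part."

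The main obstacle I anticipate is purely bookkeeping rather than conceptual: one must be careful that \cref{prop:another-charact} requires $k \geq k_\ell$ while periodicity of the slope sequence only kicks in for $k$ beyond some index, and that the period of $(\beta_k)_k$ divides (a multiple of) the eventual period $p$ of the continued fraction of $\alpha$ only after a suitable shift — so one should argue that for $k$ large and lying in the periodic part, $\beta_k$ is itself a slope with a purely periodic continued fraction expansion (and in particular a quadratic irrational whose characteristic word is a fixed point of a morphism). The other mild technical point is the routine but necessary observation that morphic words are closed under application of an arbitrary letter-to-word morphism and under prepending a finite prefix; both are standard (see \cite[Chap.~7]{AS} or \cite{cant2010}) and I would simply cite them rather than reprove them.
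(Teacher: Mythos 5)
Your overall strategy is the same as the paper's: use Lagrange's theorem to get an eventually periodic continued fraction for $\alpha$, pick $k\geq k_\ell$ deep enough in the periodic part so that $\infw{s}_{\beta_k,\beta_k}$ is fixed by a non-trivial morphism, apply \cref{prop:another-charact}, and finish by closure of morphic words under morphisms and under prepending a letter; the second sentence is likewise reduced to the first via quadraticity of the slope of a morphically-fixed Sturmian word (the paper routes this through Yasutomi's theorem to pass from $\infw{s}$ to the characteristic word of the same slope). The one step you should repair is the justification that $\infw{s}_{\beta_k,\beta_k}$ is a fixed point. You correctly record that $\beta_k=[0;b_1+1,b_2,\ldots]$ with $(b_i)$ purely periodic, but then invoke a criterion for slopes \emph{whose continued fraction expansion is purely periodic} --- and $\beta_k$ is not such a slope: because of the $+1$ on the leading partial quotient, its expansion is $[0;b_1+1,\overline{b_2,\ldots,b_p,b_1}]$, which is eventually but not purely periodic. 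The statement that actually applies is the Crisp--Moran--Pollington--Shiue/Parvaix characterization used in the paper: the characteristic word of slope $[0;1+a_1,\overline{a_2,\ldots,a_r}]$ is fixed by a non-trivial morphism if and only if $a_r\geq a_1\geq 1$. Here this condition holds precisely because pure periodicity of $(b_i)$ forces the last element of the period, $b_{p+1}=b_1$, to equal the leading quotient $a_1=b_1$, so $a_r\geq a_1$ is automatic. With that substitution your argument goes through and coincides with the paper's proof.
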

\begin{proof}
A remarkable result of Yasutomi \cite{Yasutomi1999sturmian} (see also \cite{BertheEIR2007substitution}), characterizing those Sturmian words that
are fixed by some non-trivial morphism,
implies that if a Sturmian word of slope~$\alpha$ is fixed by a
non-trivial morphism, then so is the characteristic Sturmian word of
slope~$\alpha$. Furthermore, the slope is characterized by the property that $\alpha = [0;1,a_2,\overline{a_3,\ldots,a_r}]$ with $a_r \geq a_2$
or $\alpha = [0;1+a_1,\overline{a_2,\ldots,a_r}]$ with
$a_r \geq a_1 \geq 1$ \cite{CrispMPS1993substitution,Parvaix1997Proprietes} (see also
\cite[Thm.~2.3.25]{Lothaire}). Here $\overline{x_1,\ldots,x_t}$
indicates the periodic tail of the infinite continued fraction
expansion. As $\alpha$ is quadratic, it has an eventually periodic
continued fraction expansion. There thus exist arbitrarily large $k$ for
which $\beta = [0;a_k+1,a_{k+1},\ldots]$ gives a characteristic Sturmian word of slope $\beta$ which is the fixed point of a non-trivial morphism (it is of the latter form). \cref{prop:another-charact} then posits that
$T\partial_{\infw{s},\ell}$ is the morphic image of this word, and the claim follows (because prepending the letter $\partial_{\infw{s},\ell}[\ell]$ preserves morphicity \cite[Thm.~7.6.3]{AS}).
\end{proof}

Notice that given the morphism fixing a Sturmian word
$\infw{s}$, one can compute (the continued fraction expansion of) the quadratic slope (and intercept) of $\infw{s}$ \cite{TanW2003invertible,PengT2011Sturmian,LepsovaPS2022faithful}.
Furthermore,  any (not necessarily pure) morphic Sturmian word has quadratic slope~\cite{AlloucheCassaigneShallitZamboni2017,BertheHoltonZamboni2006},  so in particular the boundary sequence of such a word is morphic.

The above corollary has an alternative proof via the logical
approach as well. For the definitions of notions that follow, we
refer to the cited papers. From the work of Hieronymi and Terry
\cite{HieronymiT2018Ostrowski}, it is known that addition in
the \emph{Ostrowski-numeration system} based on an irrational
quadratic number $\alpha$ is recognizable by a finite
automaton. This motivated Baranwal, Schaeffer, and Shallit to
introduce \emph{Ostrowski-automatic sequences}
in \cite{BaranwalSS2021Ostrowski}. For example, they showed
that the characteristic Sturmian word of slope $\alpha$ is
Ostrowski $\alpha$-automatic. Since the numeration system is
addable, the above corollary follows by the same arguments as
in \cref{ss:logic}.

 We remark that it is unclear to
us whether some of the results proved in this section could be proved
automatically using the very recent tool {\tt Pecan} developed in \cite{OeiMSH2021pecan,HieronymiMOSSS2022decidability}.

\subsection{Factor complexities of the extended boundary sequences}
\label{subsec:complexities}

\begin{definition}
A word over an alphabet $A$ is of {\em minimal complexity}
if its factor complexity is $n + |A|-1$ for all $n \geq 1$.
\end{definition}
Minimal complexity words can be seen as a generalization of
Sturmian words to larger alphabets: if a word (containing all
letters of $A$) has less than $n + |A| - 1$ factors of length
$n$ for some $n$, then it is ultimately periodic. Otherwise it
is aperiodic (a consequence of the Morse--Hedlund theorem).
See \cite{Paul1974minimal,Coven1974sequences,FerencziM1997transcendence,Cassaigne1997sequences,Didier1999characterisation} for characterizations and generalizations.

The following proposition is almost immediate after the key \cref{lem:distinctSets}.

\begin{proposition}
\label{pro:minimal-complexity}
Let $\ell \geq 2$. The $\ell$-boundary sequence of a Sturmian
word is a minimal complexity word (of complexity $n \mapsto n + 2\ell$, $n\geq 1$).
\end{proposition}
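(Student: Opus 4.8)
The plan is to derive everything from \cref{the:sturmian} and the key \cref{lem:distinctSets}, using the elementary fact that a sliding block code never raises factor complexity and preserves it exactly when its local rule is injective on the factors it actually reads. Fix a Sturmian word $\infw{s}$ of slope $\alpha$, write $p_{\infw{w}}$ for the factor complexity of a word $\infw{w}$, and recall that $p_{\infw{s}_{\alpha,\alpha}}(n)=n+1$ for all $n$. Let $\mathfrak{B}$ be the sliding block code of length $2\ell$ given by \cref{the:sturmian}, with local rule $\mathcal{B}$, so that $\mathfrak{B}(\infw{s}_{\alpha,\alpha})=T\partial_{\infw{s},\ell}$.

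First I would record the general principle: if $\mathfrak{B}$ has length $r$, then mapping a length-$(n+r-1)$ factor of $\infw{x}$ to the length-$n$ factor of $\mathfrak{B}(\infw{x})$ it produces is surjective, so $p_{\mathfrak{B}(\infw{x})}(n)\le p_{\infw{x}}(n+r-1)$; and if the local rule is injective on $\Fac_r(\infw{x})$ this map is also injective (a length-$(n+r-1)$ factor is recovered from the images of its overlapping length-$r$ windows), hence a bijection, and $p_{\mathfrak{B}(\infw{x})}(n)=p_{\infw{x}}(n+r-1)$. So it suffices to check that $\mathcal{B}$ is injective on $\Fac_{2\ell}(\infw{s}_{\alpha,\alpha})$. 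By the two nested sliding block codes in the proof of \cref{the:sturmian}, the length-$2\ell$ factor of $\infw{s}_{\alpha,\alpha}$ at a position $m$ determines the associated map $\sigma\in T_{\ell}$ of \eqref{eq:realizable-constellations}, and conversely is determined by $\sigma$, since the $\ell+1$ length-$\ell$ factors of a Sturmian word are pairwise distinct. Thus $\mathcal{B}$ factors as $w\mapsto\sigma_w\mapsto\partial_{\sigma_w}$ with the first arrow injective, and the second arrow injective on the realizable maps exactly by \cref{lem:distinctSets} --- this is the only place $\ell\ge 2$ is used (cf.\ \cref{rem:necessary} for the failure at $\ell=1$). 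Hence $p_{T\partial_{\infw{s},\ell}}(n)=p_{\infw{s}_{\alpha,\alpha}}(n+2\ell-1)=n+2\ell$ for all $n\ge 1$; in particular $T\partial_{\infw{s},\ell}$ uses exactly $2\ell+1$ letters and is a minimal complexity word.

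It remains to pass from $T\partial_{\infw{s},\ell}$ to $\partial_{\infw{s},\ell}=\partial_{\infw{s},\ell}[\ell]\,T\partial_{\infw{s},\ell}$. If $0^{2\ell}$ or $1^{2\ell}$ occurs in $\infw{s}$, then $\partial_{\infw{s},\ell}$ is uniformly recurrent by the corollary following \cref{prop:first-letter}, so each of its prefixes recurs and hence already occurs inside $T\partial_{\infw{s},\ell}$; therefore $\Fac_n(\partial_{\infw{s},\ell})=\Fac_n(T\partial_{\infw{s},\ell})$ for every $n$, and $\partial_{\infw{s},\ell}$ is a minimal complexity word of complexity $n\mapsto n+2\ell$. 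If neither $0^{2\ell}$ nor $1^{2\ell}$ occurs, \cref{prop:first-letter} gives that $\partial_{\infw{s},\ell}[\ell]$ occurs exactly once, hence is a letter absent from $T\partial_{\infw{s},\ell}$; then for each $n\ge 1$ the only length-$n$ factor of $\partial_{\infw{s},\ell}$ that is not already a factor of $T\partial_{\infw{s},\ell}$ is its length-$n$ prefix, so $p_{\partial_{\infw{s},\ell}}(n)=n+2\ell+1$ over an alphabet of size $2\ell+2$ --- again a minimal complexity word. The substantive ingredient is \cref{lem:distinctSets}; granted it, the rest is bookkeeping, and the one point needing care is exactly this non-recurrent case, where the isolated first letter shifts the complexity function up by one.
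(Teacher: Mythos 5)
Your argument is correct and, for its core, follows the same route as the paper's proof: both reduce the claim to the injectivity of the local rule of the length-$2\ell$ sliding block code from \cref{the:sturmian}, which is supplied by \cref{lem:distinctSets} via the bijection $w\mapsto\sigma_w$ between length-$2\ell$ factors of $\infw{s}_{\alpha,\alpha}$ and realizable maps in $T_\ell$ (the paper leaves the general ``injective local rule preserves complexity'' principle and this factorization implicit, but they are exactly what its one-line conclusion relies on). Where you genuinely add something is the last step. The paper's proof silently treats $\partial_{\infw{s},\ell}$ as if it were $T\partial_{\infw{s},\ell}$ and asserts that the boundary sequence has $2\ell+1$ letters; that is only guaranteed for the shifted sequence. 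As you note, when neither $0^{2\ell}$ nor $1^{2\ell}$ occurs in $\infw{s}$, \cref{prop:first-letter} makes $\partial_{\infw{s},\ell}[\ell]$ a letter occurring exactly once, so $\partial_{\infw{s},\ell}$ has $2\ell+2$ letters and factor complexity $n\mapsto n+2\ell+1$: still a minimal complexity word in the sense of the definition, but not of complexity $n\mapsto n+2\ell$ as the parenthetical in the statement claims. The paper's own \cref{exa:intro} (the Fibonacci word with $\ell=2$, whose $2$-boundary sequence uses six letters with $a$ occurring exactly once) confirms your reading. So your case split on recurrence of the first letter is not mere bookkeeping; it repairs an off-by-one that the paper's proof glosses over, and the statement would more accurately attribute the complexity $n\mapsto n+2\ell$ to $T\partial_{\infw{s},\ell}$, or to $\partial_{\infw{s},\ell}$ only under the recurrence condition of \cref{prop:first-letter}.
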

\begin{proof}
Recall that $\partial_{\infw{s},\ell}$ is obtained by a coding
of the $2\ell$-block coding of $\infw{s}_{\alpha,\alpha}$.
The following lemma says that the coding is actually a bijection;
in other words, a length-$2\ell$ factor of $\infw{s}$ uniquely
determines a boundary set, or a letter, in the boundary sequence. We conclude that the factors
of length $n + 2\ell - 1$ of $\infw{s}$ uniquely determine
a factor of length $n$ in the $\ell$-boundary sequence.
Since there are $n + 2\ell$ such factors of $\infw{s}$,
the claim follows as the number of factors of length $2\ell$ of 
$\infw{s}$, that is, the number of letters in $\partial_{\infw{s},\ell}$, is $2\ell + 1$.
\end{proof}

\begin{lemma}
\label{lem:distinctSets}
Let $\sigma$ and $\sigma' \in T_{\ell}$, $\ell \geq 2$, be distinct mappings both satisfying \eqref{eq:realizable-constellations} (for different $n$).
Then $\partial_{\sigma} \neq \partial_{\sigma'}$.
\end{lemma}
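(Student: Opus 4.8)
\textbf{Proof plan for \cref{lem:distinctSets}.}
The plan is to reduce the claim to a combinatorial statement about the cyclic order of the $2\ell+2$ points $\{-j\alpha\}$ and $R_\alpha^n(\{-j\alpha\})$ on the torus, and then exploit the fact that for $\ell \geq 2$ the $\ell+1$ intervals $I_{w_0},\ldots,I_{w_\ell}$ have at least $3$ distinct members, so no interval wraps all the way around. First I would recall from the construction preceding \cref{def:r} that, for $n > \ell$, the boundary set $\partial_{\infw{s},\ell}[n]$ is obtained by reading off, in cyclic order around $\mathbb{T}$, which interval $I_{w_k}$ contains each rotated endpoint $R_\alpha^n(\{-j\alpha\})$ and pairing $w_k$ (as a suffix) with the factor $w_{j'}$ (as a prefix) whose interval $I_{w_{j'}}$ the endpoint lies in after also accounting for the left neighbour; more precisely, a constellation records the sequence of overlaps between the rotated partition $R_\alpha^n(\{I_{w_j}\})$ and the fixed partition $\{I_{w_k}\}$. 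The key point is that from $\partial_\sigma$ one can recover, for each fixed factor $w_k$ (equivalently each interval $I_{w_k}$), exactly which rotated intervals overlap it and in which cyclic order; since $\ell\geq 2$ forces each interval to be a proper sub-arc and the rotated intervals $R_\alpha^n(I_{w_j})$ partition $\mathbb{T}$, this overlap data is precisely a complete description of how the rotated partition sits relative to the fixed one, hence determines $\sigma = \sigma_n$ up to the labelling bijection $j \mapsto w_j$, which is fixed once and for all by the slope.

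The main steps, in order, are: (1) fix the slope $\alpha$ and the lexicographic labelling $w_0 < \cdots < w_\ell$ of length-$\ell$ factors, so that the intervals $I_{w_j}$ and the points $\{-j\alpha\}$ are determined; (2) observe that a constellation for $n$ encodes, around the circle, the interleaving of the two cyclically ordered families of arcs $\{R_\alpha^n(I_{w_j})\}_j$ and $\{I_{w_k}\}_k$, and that $\partial_{\infw{s},\ell}[n]$ is exactly the multiset of pairs $(w_{\text{prefix}}, w_{\text{suffix}})$ read from consecutive overlaps in this interleaving; (3) show that for $\ell \geq 2$ this interleaving is reconstructible from $\partial_\sigma$ --- here one uses that, because there are at least three distinct intervals, each $w_k$ appears as a suffix in at least one pair and the cyclic successor relation among the overlaps is forced (the pairs sharing a fixed second coordinate $w_k$, together with the global pairing into prefixes, pin down which rotated arc straddles the boundary between $I_{w_k}$ and $I_{w_{k+1}}$); (4) conclude that $\partial_\sigma$ determines the map $j \mapsto \sigma(j)$ sending each rotated point $R_\alpha^n(\{-j\alpha\})$ to the index of its containing fixed interval, i.e.\ it determines $\sigma$; hence $\sigma \neq \sigma'$ implies $\partial_\sigma \neq \partial_{\sigma'}$. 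I would also dispatch the degenerate possibilities noted in \cref{exa:intricated} (where $\sigma$ is neither injective nor surjective, so some $I_{w_k}$ swallows several rotated arcs or a rotated arc swallows several $I_{w_k}$): in those cases the number of overlaps with a given interval, recorded by how many pairs share a coordinate, still reads off the nesting pattern, and since $\ell \geq 2$ at most one interval on each side is ``large'', so no ambiguity arises.

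I expect the main obstacle to be step (3): carefully arguing that the pairing of prefixes with suffixes, as an abstract subset of $A^\ell \times A^\ell$, genuinely recovers the \emph{cyclic} sequence of overlaps and not merely the set of overlapping pairs --- this is exactly where \cref{rem:necessary} shows things can fail for $\ell = 1$ (there $I_{w_0}$ and $I_{w_1}$ are the only two intervals and both the identity and the swap produce $\{0,1\}\times\{0,1\}$), so the argument must visibly use $\ell \geq 2$, i.e.\ the presence of a third interval to break the symmetry. Concretely, with three distinct intervals $I_{w_a}, I_{w_b}, I_{w_c}$ around the circle, knowing for each the set of rotated arcs overlapping it lets one triangulate the global cyclic order by transitivity of ``arc $R_\alpha^n(I_{w_j})$ lies between $I_{w_a}$ and $I_{w_b}$''. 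The remaining steps are essentially bookkeeping with the interval combinatorics already set up for \cref{the:sturmian} and illustrated in \cref{exa:ffib,exa:intricated}, so the proof should be short once step (3) is nailed down.
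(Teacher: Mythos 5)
Your overall strategy --- recover the whole constellation (the cyclic interleaving of the rotated and the fixed partition) from the set $\partial_\sigma$ alone, and conclude that $\partial_\sigma$ determines $\sigma$ --- is genuinely different from the paper's and is viable in principle, but your step (3), which you yourself flag as the crux, is where the proof actually lives, and the mechanisms you propose for it do not work as stated. You claim that ``the pairs sharing a fixed second coordinate $w_k$ \ldots pin down which rotated arc straddles the boundary between $I_{w_k}$ and $I_{w_{k+1}}$''. Writing $D_k=\{j:(w_j,w_k)\in\partial_\sigma\}$, the straddler of the endpoint separating $I_{w_k}$ from $I_{w_{k+1}}$ is indeed an element of $D_k\cap D_{k+1}$, but it need not be the only one: in the configurations of \cref{exa:intricated} one rotated arc meets \emph{every} fixed interval, hence belongs to every $D_k$, so $|D_k\cap D_{k+1}|\ge 2$ for all $k$ and your rule does not identify the straddler. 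Your fallback for exactly these cases (``the number of overlaps \ldots still reads off the nesting pattern'', ``at most one interval on each side is large, so no ambiguity arises'') is an assertion rather than an argument, and the length statement is not the relevant combinatorial fact; likewise ``triangulate the global cyclic order by transitivity'' is not substantiated. So, as written, there is a genuine gap precisely at the step you identified as the obstacle.

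For contrast, the paper avoids reconstruction altogether: it picks an index $j$ with $\sigma(j)\ne\sigma'(j)$, takes the two fixed intervals $I_u,I_w$ adjacent at the point $\{-j\alpha\}$, observes that their rotated images split the $\ell-1\ge 1$ intermediate fixed intervals lying strictly between $I_{w_{\sigma(j)}}$ and $I_{w_{\sigma'(j)}}$ into a clockwise and an anticlockwise group, and that this splitting is reversed when $n$ is replaced by $m$; any intermediate $I_z$ then yields a pair ($(u,z)$ or $(w,z)$) belonging to exactly one of $\partial_\sigma,\partial_{\sigma'}$. If you want to salvage your reconstruction route, the missing ingredient is a cleaner observation than the ones you propose: for $\ell\ge 2$ every intersection $R_\alpha^n(I_u)\cap I_v$ is connected, so $|\partial_\sigma|=2\ell+2<(\ell+1)^2$, and therefore at least one of the sets $C_j=\{k:(w_j,w_k)\in\partial_\sigma\}$ is a \emph{proper} cyclic interval, which consequently has well-defined first and last elements; since the last element of $C_j$ equals the first element of $C_{j+1}$ (the rotated arcs inherit the cyclic order of the fixed ones), all first elements --- and hence $\sigma$ --- are recovered by propagating once around the circle. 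That is the statement your step (3) needs, and it visibly fails only for $\ell=1$, where every $C_j$ can equal the full set, consistent with \cref{rem:necessary}.
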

\begin{proof}
Let $\sigma$ (resp., $\sigma'$) satisfy \eqref{eq:realizable-constellations} with $n$ (resp., $m$ in place of $n$, $m\neq n$). Since
$\sigma \neq \sigma'$, there exist $j \in \{0,\ldots,\ell\}$,
and distinct factors $v$, $v'\in \Fac_{\ell}(\infw{s})$
such that $R_{\alpha}^n(\{-j\alpha\}) \in I_{v}$ and
$R_{\alpha}^m(\{-j\alpha\}) \in I_{v'}$.
To fix a rotation direction, assume without loss of generality that $v'$ is lexicographically less than $v$, so $I_{v'}$ appears before $I_{v}$ in clockwise order, starting from $0$, in the $1$-dimensional torus $\mathbb{T}$.
The situation is depicted in \cref{fig:rotmincompl}: the interval $I_v$ (resp., $I_{v'}$) is colored in orange (resp., dark red).
\begin{figure}[h!t]
\centering
\includegraphics[width=.5\linewidth]{./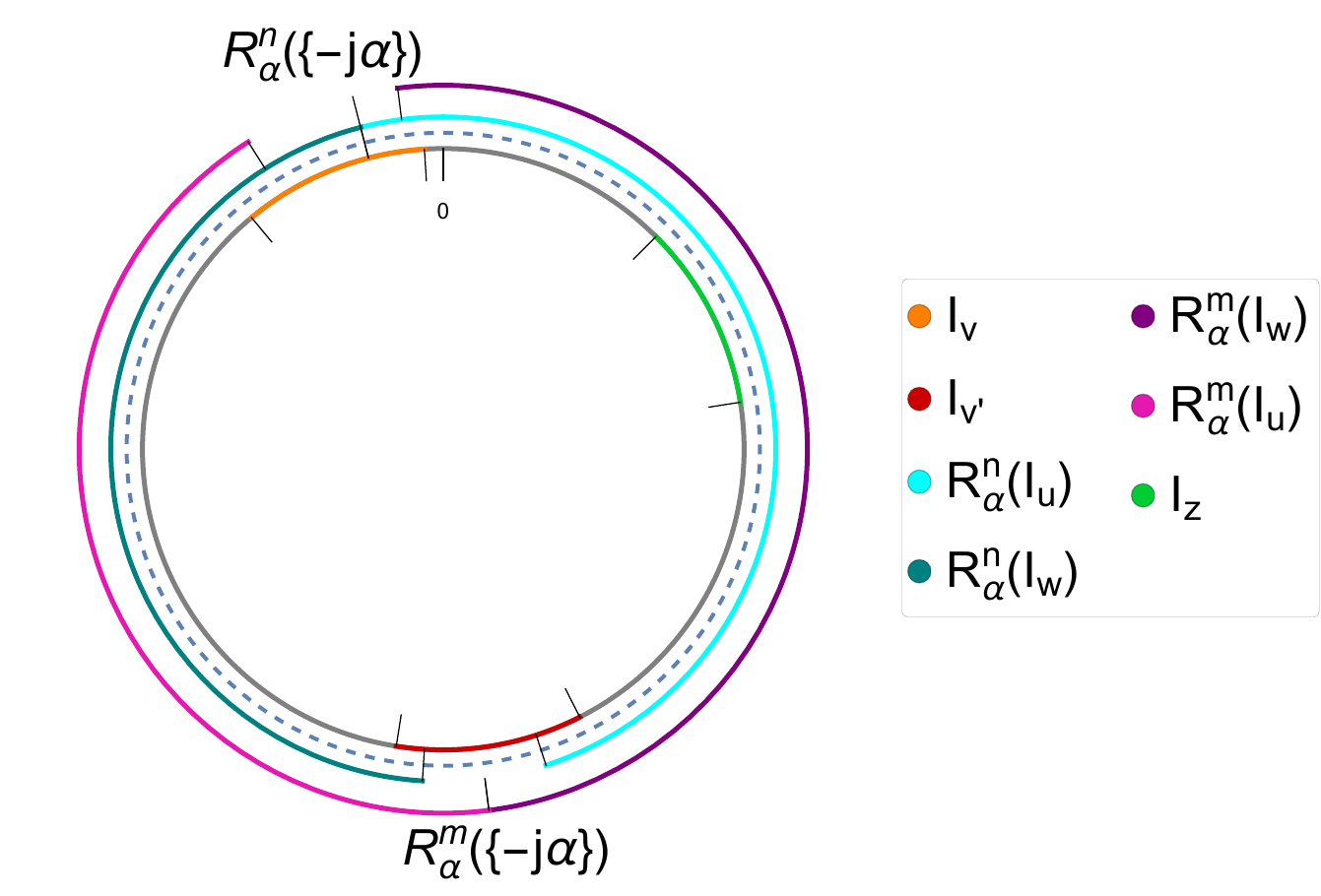}
\caption{The situation depicted in the proof of \cref{lem:distinctSets}.}
 \label{fig:rotmincompl}
\end{figure}
Say that $\{-j\alpha\}$ is the starting point (in clockwise direction) of the interval $I_{u}$, and is the ending point of the interval $I_{w}$ (again in clockwise direction); in particular, $I_u$ and $I_w$ are adjacent intervals.
In particular, in \cref{fig:rotmincompl}, the interval $R_{\alpha}^n(I_{u})$ in light turquoise (resp., $R_{\alpha}^m(I_{u})$ in pink)  appears after the interval $R_{\alpha}^n(I_w)$ in dark turquoise (resp., $R_{\alpha}^m(I_{w})$ in purple) in clockwise order.
We now have that $\partial_{\sigma}$ contains $(u,v)$ and
$(w,v)$, while $\partial_{\sigma'}$ contains $(u,v')$ and
$(w,v')$.
Assume towards a contradiction, that $\partial_{\sigma} = \partial_{\sigma'}$.
Then we must have $(u,v')$ and $(w,v') \in \partial_{\sigma}$ as well as $(u,v)$ and $(w,v) \in \partial_{\sigma'}$.
We have the following:
$R_{\alpha}^n(I_{u}) \cap I_{v} \neq \emptyset \neq R_{\alpha}^n(I_{u}) \cap I_{v'}$ (this is shown in \cref{fig:rotmincompl} where the light turquoise interval intersects both the orange and dark red interval)
and similarly
$R_{\alpha}^n(I_{w}) \cap I_{v} \neq \emptyset \neq R_{\alpha}^n(I_{w}) \cap I_{v'}$ (this is shown in \cref{fig:rotmincompl} where the dark turquoise interval intersects both the orange and dark red interval).
Since $I_u$ and $I_w$ are intervals, we see that $R^n(I_u)$
covers all intervals $I_z$ between $I_v$ and $I_v'$ in
clockwise order starting from the point
$R_{\alpha}^n(\{-j\alpha\})$.
Again, this is illustrated in \cref{fig:rotmincompl} where an interval $I_z$ is depicted in green.
Similarly $R_{\alpha}^n(I_w)$ contains all intervals $I_z$ between $I_v$ and $I_v'$ in anticlockwise order starting from the point
$R_{\alpha}^n(\{-j\alpha\})$. The total number of the
intermediate intervals $I_z$ is $\ell + 1 - 2 = \ell -1\geq 1$,
so assume without loss of generality that $R_{\alpha}^n(I_u)$
covers the interval $I_z$. In particular, this means
that $(u,z) \in \partial_{\sigma}$. But, we have a symmetric situation as follows:
the interval $R_{\alpha}^m(I_u)$ covers all intervals between
$I_{v'}$ and $I_{v}$ in clockwise order starting from $R_{\alpha}^m(\{-j\alpha\})$: these are the same intervals
covered by $R^n(I_w)$. Since $(w,z) \notin \partial_{\sigma}$,
we get the contradiction that $(u,z) \notin \partial_{\sigma'}$.
This suffices for the claim.
\end{proof}

We conclude with a formula for the factor complexity of the
$1$-boundary sequence of Sturmian words.

\begin{proposition}
Let $r$ be the maximal integer such that
$(01)^r$ appears in the Sturmian word $\infw{s}$.
The boundary sequence
$\partial_{\infw{s}}$ has factor complexity
\begin{equation*}
n \mapsto
\begin{cases}
n+1, &\text{if }n < 2r;\\
n+2, & \text{ otherwise}.
\end{cases}
\end{equation*}
\end{proposition}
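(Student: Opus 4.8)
The plan is to leverage the structural description of $\partial_{\infw{s}}$ already obtained: by \cref{the:sturmian} (with $\ell = 1$), the shifted boundary sequence $T\partial_{\infw{s}}$ is a sliding block code of length $2$ applied to the rotation word $\infw{r}$, which in turn is a sliding block code of length $1$ of the characteristic Sturmian word $\infw{s}_{\alpha,\alpha}$; for $\ell = 1$ one checks that $\infw{r} = \infw{s}_{\alpha,\alpha}$ itself. Thus $T\partial_{\infw{s}}$ is the image of $\infw{s}_{\alpha,\alpha}$ under the map $\mathcal{B}$ sending a length-$2$ factor to the corresponding boundary set. As computed in \cref{exa:fib}, $\mathcal{B}$ sends $00$ and $11$ to $b := \{0,1\}\times\{0,1\}$ and sends $01$ and $10$ to $a := \{(0,0),(0,1),(1,0)\}$ — more precisely, the length-$2$ factor $w_j$ maps to $\partial_{\sigma}$ where $\sigma \in T_1$ records which length-$1$ intervals contain the rotated endpoints; the upshot is that $\mathcal{B}(w) = a$ exactly when the two intervals $R_\alpha^n(I_u) \cap I_v$ ``wrap around'' in a way that collapses one pair, which happens iff the corresponding length-$2$ factor is $01$ or $10$ (equivalently, the boundary pair $(1,1)$ is realized iff the window can start and end on a $1$, i.e.\ the length-$2$ factor of $\infw{r}$ is $00$ or $11$). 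Since a Sturmian word contains exactly one of $00$, $11$ as a factor (say $00 \notin \Fac(\infw{s})$), the letter $a$ of $\partial_{\infw{s}}$ corresponds precisely to occurrences of the factor $11$ in $\infw{s}_{\alpha,\alpha}$, and $b$ to occurrences of $00$, $01$, $10$.

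First I would make this reduction precise and then count factors. Fix length $n$; a factor of length $n$ in $T\partial_{\infw{s}}$ is determined by a factor of length $n+1$ in $\infw{s}_{\alpha,\alpha}$, via the coding $\mathcal{B}$ applied block-wise. So the factor complexity of $T\partial_{\infw{s}}$ at $n$ equals the number of distinct images, under this block coding, of the $n+2$ factors of length $n+1$ of $\infw{s}_{\alpha,\alpha}$ (using that Sturmian complexity is $m+1$). The coding identifies two length-$(n+1)$ factors $p, q$ of $\infw{s}_{\alpha,\alpha}$ iff they have the same ``pattern of $11$'s'', i.e.\ the same set of positions $i$ with $p[i,i+1] = 11$ (assuming WLOG $00 \notin \Fac(\infw{s})$; the case $11 \notin \Fac$ is symmetric). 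The main point is then combinatorial: two distinct length-$(n+1)$ Sturmian factors have the same $11$-pattern iff one is obtained from the other by the swap that exchanges the two length-$(n+1)$ factors $P\cdot 01$ and $P\cdot 10$ sharing the common prefix $P$ of length $n-1$, where $P$ is the unique right-special factor of length $n-1$ — but this swap changes the $11$-pattern precisely when one of the two differing factors creates an occurrence of $11$ at the junction and the other does not, i.e.\ when $P$ ends in $1$ or the letter after would matter. I expect this to require a careful case analysis of how $11$ can straddle the rightmost two positions of the two competing extensions of the right-special factor, and that is where the condition $n < 2r$ versus $n \geq 2r$ enters: for small $n$, the factor $(01)^r$ forces the competing extensions to differ only in a region with no $11$, so the coding genuinely identifies them (complexity drops by one to $n+1$), whereas for $n \geq 2r$ every pair of distinct length-$(n+1)$ factors already differs in its $11$-pattern, so the coding is injective on factors (complexity $n+2$).

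The key steps, in order, are: (i) specialize \cref{the:sturmian} to $\ell=1$ and identify the coding $\mathcal{B}$ explicitly (this is essentially \cref{exa:fib}); (ii) translate the factor complexity of $\partial_{\infw{s}}$ into a counting problem about $11$-patterns (or $00$-patterns) of Sturmian factors, remembering the single extra factor contributed by the very first letter (by \cref{prop:first-letter} this first letter already occurs infinitely often, so it does not add to the complexity beyond what $T\partial_{\infw{s}}$ gives for large $n$; one should double-check small $n$ by hand); (iii) use the unique right-special factor of each length together with the explicit structure of Sturmian words — in particular that $(01)^r \in \Fac(\infw{s})$ but $(01)^{r+1} \notin \Fac(\infw{s})$, hence the maximal block of alternations has length $2r$ or $2r+1$ — to determine exactly when the two extensions of the right-special factor of length $n-1$ get identified by $\mathcal{B}$; (iv) conclude $n+1$ for $n < 2r$ and $n+2$ for $n \geq 2r$. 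The main obstacle is step (iii): one must argue that for $n < 2r$ the collision happens for exactly one pair (the two extensions of the right-special factor), giving complexity $n+1$, while for $n \geq 2r$ no collision occurs, giving $n+2$; this hinges on locating occurrences of $11$ relative to the right-special factor, which I would handle by invoking the description of Sturmian right-special factors as reversed prefixes of the characteristic word and tracking the last occurrence of $01$ (or $1$) within them. A sanity check against \cref{exa:intro} — where $\infw{s} = \infw{f}$ has $(01)^2 \notin \Fac$, so $r = 1$, $2r = 2$, predicting complexity $n+1$ for $n=1$ and $n+2$ for $n \geq 2$, matching the word $a\,b\,b\,a\,b\,b\,b\,b\cdots$ which has $2$ factors of length $1$ and $4 = 2+2$ of length $2$ — confirms the formula.
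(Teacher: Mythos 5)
Your overall reduction is the right one and matches the paper's: the complexity of $\partial_{\infw{s}}$ at length $n$ equals the number of distinct images of the $n+2$ factors of length $n+1$ of $\infw{s}$ under the $2$-block coding $\mathfrak{B}$, and the only structural feature of $\mathfrak{B}$ that matters is that it merges $01$ with $10$ while separating them from the doubled letter ($00$ or $11$, whichever occurs). (Incidentally, you have the labels backwards: as \cref{exa:fib} states, $01,10\mapsto b=\{0,1\}\times\{0,1\}$ and $00\mapsto a$; this does not affect the count, since only the fibers of $\mathcal{B}$ matter.) The genuine gap is in your step (iii). You claim that two distinct length-$(n+1)$ factors are identified by $\mathfrak{B}$ if and only if they are the two extensions $P\cdot 01$ and $P\cdot 10$ of the right-special factor $P$ of length $n-1$. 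This is false. Take the Fibonacci word with the paper's normalization ($00\in\Fac(\infw{f})$, $11\notin\Fac(\infw{f})$) and $n+1=3$: the images are $\mathfrak{B}(010)=11=\mathfrak{B}(101)$, $\mathfrak{B}(100)=10$, $\mathfrak{B}(001)=01$, so the unique colliding pair is $\{010,101\}$ --- two words that do not even agree in their first letter, and that are not the two right extensions of the right-special factor $10$ (those are $100$ and $101$). The correct key lemma, which the paper proves by a short induction on $|u|$, is that $\mathfrak{B}(u)=\mathfrak{B}(v)$ with $u\neq v$ forces $\{u,v\}$ to be the pair of \emph{alternating} words $\{\pref_{|u|}((01)^r),\pref_{|u|}((10)^r)\}$; both of these are factors exactly when $|u|\le 2r$, and neither alternating word of length $2r+1$ can collide with anything, which is precisely where the threshold $n<2r$ versus $n\ge 2r$ comes from. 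Your route through right-special factors and the location of $11$ inside them does not lead to this statement.

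Your sanity check also fails to detect the problem because it contains two compensating errors. For the Fibonacci word one has $(01)^2=0101\in\Fac(\infw{f})$ (it occurs at position $3$ of $\infw{f}=0100101001001\cdots$) while $(01)^3\notin\Fac(\infw{f})$, so $r=2$, not $1$; and $\partial_{\infw{f}}=abbabbbbabb\cdots$ has only the three factors $ab$, $bb$, $ba$ of length $2$ (the letter $a$ is never adjacent to itself), not four. With the correct $r=2$ the formula predicts $n+1$ factors for $n\in\{1,2,3\}$ and $n+2$ for $n\ge 4$, which is what one observes ($2,3,4,6,\ldots$). So the check as you ran it confirms a wrong value of $r$ against a wrong count. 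To repair the proof you should replace step (iii) by the alternating-pair lemma above (proved, e.g., by stripping common first letters and inducting, as the paper does), and then the counting in step (iv) goes through.
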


\begin{proof}
Without loss of generality, we assume that $00$ appears in $\infw{s}$ and $11$ does not.
Let $\mathfrak{B}$ be the length-$2$ sliding block code from \cref{the:sturmian}; it is not hard to show that $\mathfrak{B}$ is defined by $(00) \mapsto 0$, $(01),(10)\mapsto 1$.
To prove the claim, we show that $\mathfrak{B}(u) = \mathfrak{B}(v)$ with $u \neq v$ if and only if $u$ is a prefix of $(01)^r$ and $v$ is a prefix of $(10)^r$ (assuming $|u|,|v|\ge 2$).
This is enough since, as in the proof of \cref{pro:minimal-complexity}, a factor of length $n + 1$ of $\infw{s}$ corresponds to a factor of length $n$ of $\partial_{\infw{s}}$.

Observe that if $u$ is a prefix of $(01)^r$ and $v$ is a prefix of $(10)^r$, then $\mathfrak{B}(u) = \mathfrak{B}(v) = 1^{|u|-1}$.
Let us show the converse by induction on the length of $u,v$, and hence assume that $\mathfrak{B}(u) = \mathfrak{B}(v)$ with $u \neq v$.
If $|u|=2=|v|$, the claim is clear.
Assume then that $|u|,|v|>2$.
If $u$ and $v$ begin with the same letter, then their second letter must be equal, because otherwise $\mathfrak{B}(u)$ begins with $0$ and $\mathfrak{B}(v)$ with $1$ or vice versa.
So write $u=abu'$ and $v=abv'$ for some letters $a,b\in \{0,1\}$ and some binary words $u',v'$.
Since the words $bu'$ and $bv'$ are shorter and distinct, and have equal $\mathfrak{B}$-images, the induction hypothesis implies that one is a prefix of $(01)^r$ and the other a prefix of $(10)^r$.
This is, of course, impossible.
We conclude that the words $u$ and $v$ begin with distinct letters.
Without loss of generality, suppose that $u$ begins with $0$ and $v$ with $1$.
Since $11$ does not appear in $\infw{s}$, we deduce that $v$ begins with $10$, hence $\mathfrak{B}(v)$ begins with $1$.
Therefore $u$ must begin with $01$ for $\mathfrak{B}(u)$ to begin with $1$.
Removing the first letter of $u$ and $v$ allows us to use induction to complete the claim.
\end{proof}

As an immediate corollary, we see that the $\ell$-boundary
sequence is aperiodic for all $\ell \geq 1$.

\section{Conclusions}
There is no particular reason to consider boundary pairs of equal length. One may just as well
define the $(k,\ell)$-boundary sequence in an analogous manner. All the results appearing in
\cref{ss:logic,sec:automatic}
can be extended straightforwardly to account for this seemingly more general notion.
The methods used in \cref{sec:Sturmian} can also be adapted to deal with $(k,\ell)$-boundary
sequences straightforwardly.

\section*{Acknowledgments}
We thank Jean-Paul Allouche for references \cite{Coven1974sequences,Parvaix1997Proprietes,Paul1974minimal}, and Jeffrey Shallit for
discussions about the ``logical approach''.
The anonymous referees are warmly thanked for providing useful feedback improving the quality of the text.


 \bibliographystyle{plainurl}
\bibliography{../biblio}

\end{document}